\DeclareMathOperator{\Id}{\operatorname{Id}}
\DeclareMathOperator{\cDGA}{\operatorname{coDGA}^{\operatorname{conil}}}
\DeclareMathOperator{\icDGA}{\underline{\operatorname{coDGA}}^{\operatorname{conil}}}
\DeclareMathOperator{\iVec}{\underline{\operatorname{DGVec}}}
\DeclareMathOperator{\eDGA}{\overline{\operatorname{DGA}}_0}
\DeclareMathOperator{\DGA}{\operatorname{DGA}}
\DeclareMathOperator{\DGLA}{\operatorname{DGLA}}
\DeclareMathOperator{\cCDGA}{\operatorname{coCDGA}^{\operatorname{conil}}}
\DeclareMathOperator{\cDGLA}{\operatorname{coDGLA}^{\operatorname{conil}}}
\DeclareMathOperator{\icCDGA}{\underline{\operatorname{coCDGA}}^{\operatorname{conil}}}
\DeclareMathOperator{\DGVec}{\operatorname{DGVec}}
\theoremstyle{plain}
\newtheorem{thm}{Theorem}[section]
\newtheorem{prop}[thm]{Proposition}
\newtheorem{cor}[thm]{Corollary}
\newtheorem{lemma}[thm]{Lemma}
\theoremstyle{definition}
\newtheorem{definition}[thm]{Definition}
\newtheorem{example}[thm]{Example}
\theoremstyle{remark}
\newtheorem{remark}[thm]{Remark}
\title{Enriched Koszul Duality}
\author{Björn Eurenius}
\date{}
\begin{document}

\maketitle

\begin{abstract}
    We show that the category of non-counital conilpotent dg-coalgebras and the category of non-unital dg-algebras carry model structures compatible with their closed non-unital monoidal and closed non-unital module category structures respectively. Furthermore, we show that the Quillen equivalence between these two categories extends to a non-unital module category Quillen equivalence, i.e. providing an enriched form of Koszul duality.
\end{abstract}

\tableofcontents

\section{Introduction}
Koszul duality appears in several areas of algebra, topology and geometry where its origin can be traced back to the inception of rational homotopy theory as developed by Quillen in \cite{Quillen1969}. We will mainly be working in the context of \emph{associative dg Koszul duality} which, in its modern formulation, can be expressed as Quillen equivalence between the category of non-unital differential graded algebras, $\DGA_0$, and the category of conilpotent non-counital differential graded coalgebras, $\cDGA$, over some field $k$. This case was initially shown in \cite{Lefevre2003} and further developed in \cite{Positselski2011}. A similar result in what we will refer to as \emph{com-Lie dg Koszul duality}, previously achieved in \cite{Hinich2001}, can be expressed as a Quillen equivalence between the category of differential graded Lie algebras, $\DGLA$, and the category of conilpotent non-counital cocommutative coalgebras, $\cCDGA$, over some field $k$ of characteristic zero. For a general survey of these results as well as further reading on Koszul duality we refer the reader to \cite{Positselski2022}. 

On a somewhat different track, it has been shown in \cite{Anel2013} that the category $\DGA_0$ is enriched, tensored, and cotensored over the closed symmetric monoidal category of non-counital differential graded coalgebras $(\text{coDGA}_0,\otimes)$ equipped with the ordinary tensor product. This was further extended to the operadic setting in \cite{LeGrignou2019}. Our main aim with this paper is to provide a strengthening of Koszul duality that respects this enrichment of algebras over coalgebras. However in the case of associative Koszul duality we are dealing with the category of conilpotent coalgebras, $\cDGA$, which does not have a monoidal unit under the ordinary tensor product making it into a semi-monoidal category. 

Motivated by the lack of a unit, we introduce the notion of semi-module categories over a semi-monoidal category. Taking this further in the homotopical direction, we introduce semi-monoidal model categories and semi-module model categories analogous to their unital counterparts. Remembering that a closed module category is precisely the same thing as a tensored and cotensored enriched category, over the same monoidal category, this also allows us to speak about what we will refer to as a semi-enrichment over a semi-monoidal category. While a priori a weaker concept than enriched category theory, it nevertheless puts structure limitations on the categories in question. Furthermore, while our main interest as well as initial motivation is that of Koszul duality, one quickly finds that semi-monoidal model categories that are not monoidal do appear naturally in homotopical algebra. 

Having established the framework of semi-monoidal- and semi-module- categories as well as their model categorical analogues, we proceed to show that $\DGA_0$ can be given a closed semi-module category structure over the semi-monoidal category $(\cDGA,\otimes)$ using the same procedure as in \cite{Anel2013}. Furthermore, we show that their semi-module structures are compatible with their respective model category structures as well as with the Quillen equivalence that is associative Koszul duality. Proceeding similarly in the case of com-Lie Koszul duality, one obtains the corresponding result in that setting. Specifically, our main result in the associative setting is
\begin{thm}\label{thm1}
    The category $(\cDGA,\otimes,\icDGA)$ is a semi-monoidal model category and $(\DGA_0,\rhd,\overline{\DGA}_0,\{-,-\})$ is a semi-module model category over $\cDGA$. Furthermore, the Quillen equivalence
    \begin{equation*}
    \begin{tikzcd}
        \cDGA \arrow[r,shift left=1.5ex,"\Omega"] \arrow[r,phantom,"\perp"] & \DGA_0 \arrow[l,shift left=1.5ex,"B"]
    \end{tikzcd}
    \end{equation*}
    upgrades to a $\cDGA$-module Quillen equivalence. 
\end{thm}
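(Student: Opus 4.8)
The plan is to establish Theorem~\ref{thm1} in three stages, corresponding to its three assertions, reusing as much of the classical associative Koszul duality machinery as possible while carefully tracking the semi-monoidal and semi-module structure. First I would verify that $(\cDGA,\otimes)$ is a semi-monoidal model category. The ordinary tensor product of conilpotent non-counital dg-coalgebras is again such a coalgebra, and the internal hom object $\icDGA$ is constructed as the appropriate cofree conilpotent coalgebra corepresenting the relevant functor, exactly as in the unital constructions; I would check that $\otimes$ is closed in the semi-monoidal sense by exhibiting the adjunction $\cDGA(A\otimes B,C)\cong\cDGA(A,\icDGA(B,C))$ directly. The model-categorical compatibility amounts to checking the pushout-product (SM7-type) axiom: for cofibrations $f,g$ in $\cDGA$, the pushout-product $f\square g$ is a cofibration that is acyclic if either $f$ or $g$ is. I would reduce this to generating (acyclic) cofibrations and use the explicit description of the model structure on $\cDGA$ from the cited work of Positselski and Lef\`evre-Hasegawa, together with the exactness of $\otimes$ over a field.

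Next I would treat $(\DGA_0,\rhd,\eDGA,\{-,-\})$ as a semi-module model category over $\cDGA$. The action $\rhd\colon \cDGA\times\DGA_0\to\DGA_0$, its adjoint cotensor $\eDGA$, and the enriched hom $\{-,-\}$ are built following the Anel--Fresse construction in \cite{Anel2013}, which already furnishes the closed semi-module structure at the level of categories; the new content is the homotopical compatibility. Here the key is again a pushout-product axiom, now mixing a cofibration in $\cDGA$ with a (co)fibration in $\DGA_0$: for a cofibration $i$ in $\cDGA$ and a fibration $p$ in $\DGA_0$, the induced map into the pullback
\begin{equation*}
    \eDGA(\operatorname{cod} i, X)\longrightarrow \eDGA(\operatorname{dom} i, X)\times_{\eDGA(\operatorname{dom} i, Y)}\eDGA(\operatorname{cod} i, Y)
\end{equation*}
is a fibration, acyclic if $i$ or $p$ is. I would again reduce to generators and verify the lifting properties by hand, leaning on the explicit bar-cobar descriptions of fibrations and weak equivalences in $\DGA_0$.

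Finally, to upgrade the Quillen equivalence $\Omega\dashv B$ to a $\cDGA$-module Quillen equivalence, I would show that the left adjoint $\Omega$ (the cobar-type functor) is a strong semi-monoidal-module functor, i.e. that there is a natural isomorphism $\Omega(C\otimes D)\cong C\rhd\Omega(D)$ (or the analogous oplax/lax comparison) compatible with the associativity constraints, and that this comparison is a weak equivalence on cofibrant objects. The cleanest route is to check the comparison map on the level of the underlying generators and use that both $\Omega$ and $\rhd$ are defined by the same cofree/tensor-algebra recipe, so the isomorphism is essentially formal; then Quillen-equivalence of the underlying adjunction (already known classically) upgrades automatically once the module-functor comparison is an equivalence.

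The main obstacle I anticipate is the semi-monoidal pushout-product axiom and its module analogue in the \emph{absence of a unit}. In the unital setting one typically normalises by the monoidal unit and reduces various coherence checks to it; without a unit in $(\cDGA,\otimes)$ one must verify the closedness adjunctions and the pushout-product axiom intrinsically, and in particular ensure that the generating cofibrations of the conilpotent coalgebra model structure interact well with $\otimes$ despite there being no unit object to test against. I expect the conilpotency condition to be doing real work here, guaranteeing that the relevant filtered colimits and cofree constructions behave, and the bulk of the technical effort will be confirming that the explicit generators from \cite{Lefevre2003,Positselski2011} satisfy the semi-enriched lifting conditions.
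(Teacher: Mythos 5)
Your three-stage decomposition (semi-monoidal model structure on $\cDGA$, semi-module model structure on $\DGA_0$, module upgrade of the bar-cobar equivalence) matches the paper's, but two of your execution steps contain genuine gaps. The first is the reduction to generating (acyclic) cofibrations in stages one and two. No explicit cellular description of the model structure on $\cDGA$ is available in \cite{Lefevre2003} or \cite{Positselski2011}: the cofibrations are \emph{all} injections, the weak equivalences are defined only implicitly as the closure of the filtered quasi-isomorphisms under 2 out of 3, and the structure is left-transferred rather than known to be cofibrantly generated, so there are no "explicit generators" to lean on. The paper avoids generators entirely: it proves direct preservation lemmas --- the tensor product preserves injections and filtered quasi-isomorphisms (\Cref{lemmatensorpres}, together with the observation that a functor preserving filtered quasi-isomorphisms preserves their 2-out-of-3 closure), and the convolution algebra takes injections of coalgebras and surjections of algebras to surjections and preserves quasi-isomorphisms, because under the forgetful functor to $\DGVec$ it becomes the internal hom, which is exact over a field (\Cref{convpres(co)fib}) --- and then obtains the pushout-product and pullback-product axioms by 2 out of 3 applied in the defining square (\Cref{coDGAmonoidalmodel}, \Cref{DGAmonoidalmodel}). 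Replacing "reduce to generators" by these preservation-plus-2-out-of-3 arguments is not a cosmetic change; it is where the proof actually lives. (A smaller point: your displayed pullback condition is typed with the enrichment functor in a slot that must receive an algebra; the condition has to be stated for the cotensoring, i.e. the convolution algebra $\{-,-\}$, which is evidently what you intend, but as written it does not typecheck in the paper's conventions.)

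The second gap is in stage three. The claim that $C \rhd \Omega(D) \cong \Omega(C\otimes D)$ is "essentially formal" because both sides are built by the same tensor-algebra recipe overlooks that $\Omega D$ is free only as a \emph{graded} algebra: its differential has a quadratic component coming from the comultiplication, so $\Omega D$ is not of the form $T_0V$ as a dg-algebra, the identity $C \rhd T_0 V = T_0(C\otimes V)$ does not apply, and $C\rhd \Omega D$ must a priori be computed from the coequalizer defining $\rhd$. The device that makes this step formal --- and which is absent from your proposal --- is the universal Maurer-Cartan algebra $\mathbf{mc}$: since
\begin{equation*}
    \DGA_0(C\rhd \mathbf{mc},A) \cong \operatorname{MC}(\{C,A\}) \cong \DGA_0(\Omega C,A)
\end{equation*}
naturally in $A$, Yoneda gives $\Omega C \cong C \rhd \mathbf{mc}$ (\Cref{proprhdomega}), and then the module-functor structure on $\Omega$ follows from the module associator, $C\rhd \Omega D \cong C \rhd (D\rhd \mathbf{mc}) \cong (C\otimes D)\rhd \mathbf{mc} \cong \Omega(C\otimes D)$, with coherence inherited from the coherence of the $\cDGA$-module structure of $\DGA_0$ (\Cref{DGAclosedmodule}). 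Note also that your final requirement that the comparison be a weak equivalence on cofibrant objects is superfluous: the paper's definition of a $\cDGA$-module Quillen adjunction asks only that the left adjoint carry a strong module functor structure, so once the natural isomorphism is in place, the classically known Quillen equivalence upgrades with nothing further to check.
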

Here we used the notation $\icDGA$ for the internal hom of $\cDGA$, while $\rhd$, $\eDGA$, and $\{-,-\}$ corresponds to the tensoring, enrichment, and cotensoring functors of the closed semi-module structure of $\DGA_0$ respectively. In particular the cotensoring $\{-,-\}$ is the convolution algebra functor.

    The corresponding result in the com-Lie context is the following.
    \begin{thm}\label{thmLie}
    The category $(\cCDGA,\otimes,\icCDGA)$ is a semi-monoidal model category and $(\DGLA,\rhd,\overline{\DGLA},\{-,-\})$ is a semi-module model category over $\cCDGA$. Furthermore the Quillen equivalence
    \begin{equation*}
    \begin{tikzcd}
        \cCDGA \arrow[r,shift left=1.5ex,"\Omega"] \arrow[r,phantom,"\perp"] & \DGLA \arrow[l,shift left=1.5ex,"B"]
    \end{tikzcd}
    \end{equation*}
    upgrades to a $\cCDGA$-module Quillen equivalence. 
\end{thm}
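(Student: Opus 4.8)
The plan is to run the argument in exact parallel with the proof of Theorem~\ref{thm1}, carrying each construction and verification across to the cocommutative, characteristic-zero setting and checking that neither the symmetry constraint on coalgebras nor the passage from associative algebras to Lie algebras obstructs any step. First I would fix the model structures: the Hinich model structure on $\cCDGA$, in which the cofibrations are the injections and the weak equivalences are the maps carried by $\Omega$ to quasi-isomorphisms of dg Lie algebras, together with the standard model structure on $\DGLA$ (quasi-isomorphisms and degreewise surjections). The symmetric monoidal product $\otimes$ restricts to cocommutative coalgebras because in characteristic zero the tensor product of two cocommutative comultiplications is again cocommutative, yielding the semi-monoidal structure and its internal hom $\icCDGA$. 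The semi-module data on $\DGLA$ is then defined exactly as in the associative case: the cotensoring $\{C,\mathfrak{g}\}$ is the convolution dg Lie algebra on the hom-complex $\mathrm{Hom}(C,\mathfrak{g})$ with bracket induced by the comultiplication of $C$ and the bracket of $\mathfrak{g}$, the tensoring $\rhd$ is its left adjoint, and $\overline{\DGLA}$ is the resulting cocommutative-coalgebra enrichment.

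With the data in place, the first two assertions amount to the semi-monoidal and semi-module pushout-product axioms. For $\cCDGA$ I would split this as usual: the monomorphism half of the pushout-product $f\square g$ is immediate from flatness of vector spaces over a field, using that the forgetful functor to $\DGVec$ is a left adjoint and so computes the relevant pushouts; the acyclicity half, by contrast, cannot be read off underlying complexes, since the weak equivalences are $\Omega$-equivalences rather than quasi-isomorphisms, and must be propagated through $\Omega$. For the $\cCDGA$-semi-module structure on $\DGLA$ the same pattern applies, and after reducing to generating (acyclic) cofibrations the verification becomes a computation with the convolution bracket together with the freeness of the underlying graded Lie algebras, which in characteristic zero is controlled by the splitting of symmetric (co)invariants.

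For the final assertion I would first promote the adjunction $\Omega\dashv B$ to a module adjunction and then observe that the module Quillen equivalence is formal. The core input is the strong-module compatibility of the left adjoint, $\Omega(C\otimes D)\cong C\rhd\Omega(D)$, naturally in $C,D$; by passing to right adjoints this is equivalent to the identity $B\{C,\mathfrak{g}\}\cong\icCDGA(C,B\mathfrak{g})$ relating the bar of the convolution Lie algebra to the internal hom of coalgebras, and I would establish it in this second, more explicit form using the Chevalley--Eilenberg description of $B$ and the convolution comultiplication. Granting this compatibility, and using that $\Omega\dashv B$ is already a Quillen equivalence (the com-Lie Koszul duality of Hinich underlying the displayed adjunction), the upgrade is immediate: the derived unit and counit are weak equivalences by hypothesis and the module structure maps are isomorphisms rather than merely lax, so no additional acyclicity need be checked.

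The step I expect to be the main obstacle is the acyclicity half of the pushout-product axiom for $\cCDGA$, since it forces one to transport acyclicity of a pushout-product through $\Omega$, a functor that is monoidal only in a filtered or derived sense. Executing this in the cocommutative setting—where $\Omega$ is the cofree-conilpotent cobar rather than the tensor cobar—requires a careful filtration argument on the associated cosymmetric coalgebras, and it is precisely here that characteristic zero is used essentially, both to split the relevant associated-graded pieces and to guarantee that the (co)symmetrization idempotents act exactly rather than only up to higher homotopy.
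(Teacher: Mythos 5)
Your overall architecture (semi-monoidal structure on $\cCDGA$, semi-module structure on $\DGLA$, pushout-product verifications, then a strong module compatibility for $\Omega$) matches the paper's, but at the step you yourself flag as the main obstacle you are missing the idea that makes the paper's proof short, and your substitute argument is never carried out. You take the weak equivalences of $\cCDGA$ to be the maps sent by $\Omega$ to quasi-isomorphisms, and are consequently forced to ``propagate acyclicity through $\Omega$'' by a filtration argument on cosymmetric coalgebras, which you only name as a difficulty. The paper instead uses the (equivalent, by Hinich) description of the weak equivalences as the smallest class containing the filtered quasi-isomorphisms and closed under the 2-out-of-3 property. With that description, preservation of weak equivalences by $-\otimes E$ for fixed conilpotent $E$ reduces to preservation of filtered quasi-isomorphisms, which is a direct computation: if $F_C$, $F_D$ are admissible filtrations witnessing that $f$ is a filtered quasi-isomorphism, then $F_C\otimes E$ and $F_D\otimes E$ are admissible, $\operatorname{gr}$ commutes with $-\otimes E$, and tensoring over a field preserves quasi-isomorphisms. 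Since the class of maps whose tensor with $E$ is a weak equivalence is closed under 2-out-of-3, it contains all weak equivalences, and acyclicity of the pushout-product then follows from 2-out-of-3 in the pushout square. No cobar functor and no cosymmetric filtration enters this step, and characteristic zero is not used here at all (it is needed for the existence of the model structures; likewise, cocommutativity of the tensor product of cocommutative coalgebras is a fact about symmetric monoidal categories, not about characteristic zero).

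Two further places where the paper's route differs from yours, both in the direction of avoiding computation. For the module model structure on $\DGLA$, the paper never reduces to generating cofibrations or computes with the convolution bracket: it passes to the equivalent pullback-product condition for the cotensoring $\{-,-\}$, whose inputs are injections of coalgebras and surjections of Lie algebras; since the forgetful functor to $\DGVec$ carries $\{C,\mathfrak{g}\}$ to $\iVec(C,\mathfrak{g})$, which is exact, the convolution functor takes (acyclic) cofibrations in the first variable and (acyclic) fibrations in the second to (acyclic) fibrations, and the pullback-product axiom follows by 2-out-of-3. For the upgrade of the Quillen equivalence, your adjoint reformulation $B\{C,\mathfrak{g}\}\cong\icCDGA(C,B\mathfrak{g})$ is correct, but rather than proving it by a Chevalley--Eilenberg computation the paper establishes the left-adjoint form purely formally: representability of $\operatorname{MC}_{\operatorname{Lie}}$ by $\mathbf{mc}_{\operatorname{Lie}}$ together with the tensoring adjunction gives
\begin{equation*}
    \DGLA(C\rhd\mathbf{mc}_{\operatorname{Lie}},\mathfrak{g})\cong\operatorname{MC}_{\operatorname{Lie}}(\{C,\mathfrak{g}\})\cong\DGLA(\Omega C,\mathfrak{g}),
\end{equation*}
hence $\Omega C\cong C\rhd\mathbf{mc}_{\operatorname{Lie}}$ naturally in $C$, and the module-functor structure on $\Omega$ is then inherited from the module associator of $\rhd$. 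Your closing observation --- that the structure maps are isomorphisms, so the existing Quillen equivalence upgrades with no additional acyclicity check --- agrees with the paper's reasoning.
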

Here we used the notation $\icCDGA$ for the internal hom functor of
$\cCDGA$, while $\rhd$, $\overline{\DGLA}$, and $\{-,-\}$ corresponds to the tensoring, enrichment, and cotensoring functors of the closed semi-module structure of $\DGLA$ respectively. In particular the cotensoring $\{-,-\}$ is the convolution algebra functor.

Finally, let us note that similar results in the context of dg-categories have been achieved in \cite{2022Holstein}, where they provide a homotopical enrichment of dg-categories over pointed coalgebras. Note that the category of pointed algebras is monoidal, as opposed to just semi-monoidal, and as such they provide the category of dg-categories with an enriched category structure. 

\subsection*{Acknowledgements}
The author thanks Andrey Lazarev for numerous helpful discussions and suggestions while preparing this paper. The author also thanks Ai Guan and Lauren Hindmarch for providing feedback on earlier drafts.

\subsection{Conilpotent coalgebras}
    We will here recall some facts about dg-coalgebras and conilpotent dg-coalgebras in particular. We will be working with the category of non-counital coassociative dg-coalgebras, $\text{coDGA}_0$, over some field $k$. Explicitly, this means that we require that the comultiplication $\Delta:C \to C\otimes C$ satisfies coassociativity, i.e.
\begin{equation*}
    (\Delta \otimes \Id)\circ \Delta = (\Id \otimes \Delta)\circ \Delta,
\end{equation*}
and that the differential $d:C \to C$ is a coderivation, i.e. satisfying
\begin{equation*}
    \Delta \circ d = (\Id \otimes d+d\otimes \Id) \circ \Delta.
\end{equation*}
However we will make no demands on the existence of a counit morphism. If a dg-coalgebra $(C,\Delta)$ satisfies that $\Delta = B_{C,C} \circ \Delta$, where $B_{C,C}:x\otimes y\mapsto (-1)^{|x||y|}y\otimes x$ is the braiding morphism of dg-vector spaces, we say that $C$ is cocommutative. As all our coalgebras will be differential graded, we will from now take coalgebra to implicitly mean dg-coalgebra.

\begin{definition}
Let $(C,\Delta)$ be a non-counital coalgebra. We say that an element $x\in C$ is conilpotent if there exists some $n$ such that $\Delta^n(x) = 0$. If all elements of $C$ are conilpotent we say that $C$ is conilpotent.
\end{definition}
We will denote the category of conilpotent dg-coalgebras by $\cDGA$. 

\begin{definition}
    Let $(C,\Delta,d)$ be a non-counital coalgebra. We say that an element $c\in C$ is an atom if $\Delta(c) = c\otimes c$ and $dc = 0$.
\end{definition}
The set of atoms of a non-counital coalgebra $C$ is in one-to-one correspondence with coalgebra morphisms $k\to C$ from the monoidal unit. We note that a conilpotent coalgebra $C$ has exactly one atom $0\in C$. In particular, if $C$ is conilpotent, the zero morphism is the only morphism $k\to C$.

\begin{prop}\label{propconilclosedundertensor}
    Let $(C,\Delta_C)$ be a conilpotent coalgebra and $(D,\Delta_D)$ an arbitrary non-unital coalgebra. Then $(C\otimes D,\Delta_{C\otimes D})$ is also conilpotent.
\end{prop}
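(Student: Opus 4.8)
The plan is to reduce conilpotence of $C\otimes D$ to conilpotence of $C$ alone by analysing the iterated comultiplication on the tensor-product coalgebra. Recall that $\Delta_{C\otimes D} = (\Id\otimes B\otimes \Id)\circ(\Delta_C\otimes\Delta_D)$, where $B$ is the braiding of dg-vector spaces; the point is that iterating this operation keeps the $C$-legs and the $D$-legs essentially independent, merely interleaving and shuffling them.

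First I would establish, by induction on $n$, a formula for the $n$-fold iterate in terms of the iterates on the two factors. Writing $\Delta^n_C:C\to C^{\otimes(n+1)}$ and $\Delta^n_D:D\to D^{\otimes(n+1)}$ for the iterated comultiplications, I expect
\begin{equation*}
    \Delta^n_{C\otimes D} = \sigma_n\circ(\Delta^n_C\otimes\Delta^n_D),
\end{equation*}
where $\sigma_n:C^{\otimes(n+1)}\otimes D^{\otimes(n+1)}\to (C\otimes D)^{\otimes(n+1)}$ is the shuffle isomorphism interleaving the $C$- and $D$-tensor factors, assembled from braidings. The base case $n=1$ is precisely the definition of $\Delta_{C\otimes D}$, and the inductive step applies one further comultiplication to each leg and reshuffles, with the monoidal structure bookkeeping the resulting braiding signs.

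The crucial observation is then that $\sigma_n$ is an isomorphism, being a composite of braidings. Hence on a decomposable tensor $c\otimes d$ the element $\Delta^n_{C\otimes D}(c\otimes d)$ vanishes exactly when $\Delta^n_C(c)\otimes\Delta^n_D(d)=0$, and in particular whenever $\Delta^n_C(c)=0$ — note that conilpotence of $D$ is never invoked. To treat an arbitrary element, write $x=\sum_{i=1}^m c_i\otimes d_i\in C\otimes D$ as a finite sum. Conilpotence of $C$ supplies for each $i$ an integer $n_i$ with $\Delta^{n_i}_C(c_i)=0$, and since $\ker\Delta^k_C\subseteq\ker\Delta^{k+1}_C$ (once a leg dies under comultiplication it stays dead), setting $n=\max_i n_i$ gives $\Delta^n_C(c_i)=0$ for every $i$. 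Therefore
\begin{equation*}
    \Delta^n_{C\otimes D}(x)=\sum_{i=1}^m \sigma_n\bigl(\Delta^n_C(c_i)\otimes\Delta^n_D(d_i)\bigr)=0,
\end{equation*}
which establishes conilpotence of $C\otimes D$.

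The main obstacle is the first step: pinning down the permutation $\sigma_n$ precisely and verifying by induction that it is the interleaving isomorphism, since the braidings introduce Koszul signs that must be tracked carefully through the monoidal structure. Everything afterwards — the maximum argument over a finite sum and the remark that $D$ plays no role — is routine.
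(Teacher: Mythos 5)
Your proof is correct and follows essentially the same route as the paper's: identify the $n$-fold comultiplication on $C\otimes D$, up to a shuffle isomorphism (i.e.\ up to reordering and Koszul signs), with $\Delta_C^n\otimes\Delta_D^n$, and observe that $\Delta_C^n(c)=0$ already forces the vanishing, so conilpotence of $D$ is never invoked. The only difference is thoroughness: the paper's proof stops at pure tensors, whereas you also carry out the (routine but strictly speaking necessary) extension to arbitrary finite sums $\sum_i c_i\otimes d_i$, using the nested kernels $\ker\Delta^k_C\subseteq\ker\Delta^{k+1}_C$ and a maximum over the summands to get a uniform exponent.
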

\begin{proof}
    Let $c\otimes d$ be a pure tensor in $C\otimes D$. By assumption there exists some $n>0$ such that $\Delta_C^n(c) = 0$. Then we have
    \begin{equation*}
        \Delta_{C\otimes D}^n(c\otimes d) := (\Delta_C \otimes \Delta_D)^n(c\otimes d) = \Delta^n_C(c) \otimes \Delta^n_D(d)=0,
    \end{equation*}
    where we have not done any reordering of the terms as it only affects the sign.
\end{proof}

    \begin{definition}
        Let $(C,d_C,\Delta_C)\in \cDGA$ be a conilpotent dg-coalgebra. An admissible filtration of $C$ is an increasing filtration $F$ starting at $F_1$ compatible with the differential and comultiplication, meaning that
        \begin{equation*}
            \begin{aligned}
                &d(F_n) \subset F_n, \text{ and}\\
                &\Delta(F_n) \subset \bigoplus_{k=1}^{n-1} F_{n-k}\otimes F_k.
            \end{aligned}
        \end{equation*}
    \end{definition}
    
     \begin{example}
        For any conilpotent dg-coalgebra $(C,\Delta)$ there is a canonical admissible filtration, known as the coradical filtration, given by $F_n = \ker {\Delta}^n$. 
    \end{example}
    \begin{definition}
        We say that a morphism  $f:C\to D$ in $\cDGA$ is a filtered quasi-isomorphism if there exists admissible filtrations $\mathcal{F}_C$ and $\mathcal{F}_D$ of $C$ and $D$ respectively such that the induced morphism of the associated graded complexes $\operatorname{gr} f:\operatorname{gr}(\mathcal{F}_C) \to \operatorname{gr}(\mathcal{F}_D)$ is a quasi-isomorphism in each degree. 
    \end{definition}

Several of our proofs rely on the existence of cofree objects in the categories of $\text{coDGA}_0$ and $\cDGA$. The cofree functor $\check{T}_0$ is defined as the right adjoint to the forgetful functor to the category of differential graded vector spaces, $\DGVec$, i.e.
\begin{equation*}
        \begin{tikzcd}
        \operatorname{coDGA}_0 \arrow[r,shift left=1.5ex,"U"] \arrow[r,phantom,"\perp"] & \DGVec \arrow[l,shift left=1.5ex,"\check{T}_0"].
    \end{tikzcd}
\end{equation*}
The existence of this adjunction was shown in \cite{Block1985}.
We will say that a coalgebra of the form $\check{T}_0V$ for some $V\in \DGVec$ is cofree.
Analogously the conilpotent cofree functor $T^{\operatorname{co}}$ is defined to satisfy the adjunction
\begin{equation*}
        \begin{tikzcd}
        \cDGA \arrow[r,shift left=1.5ex,"U"] \arrow[r,phantom,"\perp"] & \DGVec \arrow[l,shift left=1.5ex,"T^{\operatorname{co}}"],
    \end{tikzcd}
\end{equation*}
and we say that a conilpotent coalgebra of the form $T^{\operatorname{co}}V$ for some $V\in \DGVec$ is conilpotent cofree. Conilpotent cofree coalgebras are also known as tensor coalgebras as they are constructed analogously to the tensor algebra but instead given the cut comultiplication. 

There is also an adjunction,
\begin{equation*}
        \begin{tikzcd}
        \cDGA \arrow[r,shift left=1.5ex,"\iota"] \arrow[r,phantom,"\perp"] & \operatorname{coDGA}_0 \arrow[l,shift left=1.5ex,"{R^{\operatorname{co}}}"],
    \end{tikzcd}
\end{equation*}
between the inclusion functor $\iota:\cDGA \to \text{coDGA}_0$ and the conilpotent radical functor $R^{\operatorname{co}}:\text{coDGA}_0 \to \cDGA$. The latter is defined by taking a coalgebra $C$ to the subcoalgebra consisting of conilpotent elements of $C$. The adjunction follows from that the image of a conilpotent element under a coalgebra morphisms is also conilpotent. As a consequence of this adjunction we have that $T^{\operatorname{co}} \cong R^{\operatorname{co}}\check{T}_0$.

When working with the category of cocommutative conilpotent non-counital dg-coalgebras $\cCDGA$ we will also make use of the adjunction
\begin{equation*}
        \begin{tikzcd}
        \cCDGA \arrow[r,shift left=1.5ex,"U"] \arrow[r,phantom,"\perp"] & \DGVec \arrow[l,shift left=1.5ex,"S^{\operatorname{co}}"].
    \end{tikzcd}
\end{equation*}
Here $S^{\operatorname{co}}$ denotes the cocommutative conilpotent cofree functor, which takes a dg-vector space $V$ to the coabelianization of the conilpotent cofree algebra over $V$. We say that a cocommutative conilpotent coalgebra of the form $S^{\operatorname{co}}V$ for $V\in \DGVec$ is cocommutative conilpotent cofree.

\subsection{Maurer-Cartan elements and Koszul duality}

\begin{definition}
    Let $(A,d_A)$ be a non-unital dg-algebra. We say an element $a\in A$ of degree $-1$ is a Maurer-Cartan element if it satisfies the Maurer-Cartan equation
    \begin{equation*}
        d_Aa + a^2 = 0.
    \end{equation*}
    We denote the set of Maurer-Cartan elements of $A$ by $\operatorname{MC}(A)$.
\end{definition}

    We define the universal Maurer-Cartan algebra $\mathbf{mc}$ as the non-unital free algebra $T_0\langle x\rangle$, where  $\langle x\rangle$ denotes the vector space generated by the variable $x$ of degree $-1$, and given the differential induced from $d:x\mapsto -x^2$. Noting that any Maurer-Cartan element $a\in A$ corresponds to the unique morphism in $\DGA(\mathbf{mc},A)$ generated by $x\mapsto a$ we get the following proposition.
\begin{prop}
    The functor $\operatorname{MC}:\DGA_0 \to \operatorname{Set}$ taking a dg-algebra to its set of Maurer Cartan elements is representable by the universal Maurer-Cartan element $\mathbf{mc}$.
\end{prop}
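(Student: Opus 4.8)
The plan is to exhibit, for each non-unital dg-algebra $(A,d_A)$, a bijection $\Phi_A\colon \DGA_0(\mathbf{mc},A) \xrightarrow{\sim} \operatorname{MC}(A)$ sending a morphism $f$ to the element $f(x)$, and then to verify that this assignment is natural in $A$. The construction of $\mathbf{mc}$ as the free non-unital algebra $T_0\langle x\rangle$ already records the universal property we need at the level of underlying graded algebras; the only extra content is the interaction with the two differentials, and it is precisely here that the Maurer--Cartan equation appears.

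First I would invoke freeness of $T_0$: a morphism of \emph{graded} non-unital algebras (forgetting differentials) $T_0\langle x\rangle \to A$ is the same datum as a choice of image for the generator $x$, hence as an element $a \in A$ of degree $-1$, since $|x| = -1$. It therefore suffices to determine for which $a$ the associated graded morphism $f$ with $f(x) = a$ commutes with the differentials. The key reduction is that both $d_A \circ f$ and $f \circ d_{\mathbf{mc}}$ are $f$-derivations, i.e. degree $-1$ maps $g$ satisfying the twisted Leibniz rule $g(uv) = g(u)f(v) + (-1)^{|u|}f(u)g(v)$; this holds because $d_A$ and $d_{\mathbf{mc}}$ are derivations and $f$ is multiplicative and degree-preserving. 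Two $f$-derivations that agree on an algebra generating set agree everywhere, so $f$ is a chain map if and only if it commutes with the differentials on $x$ alone.

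Evaluating on $x$ gives $d_A(f(x)) = d_A a$ on one side and $f(d_{\mathbf{mc}}(x)) = f(-x^2) = -a^2$ on the other, so the compatibility condition is exactly $d_A a + a^2 = 0$, the Maurer--Cartan equation. Thus $\Phi_A$ restricts to a bijection of $\DGA_0(\mathbf{mc},A)$ onto $\operatorname{MC}(A)$. Naturality is then immediate: for a morphism $g\colon A \to A'$ in $\DGA_0$, the morphism $f_a$ with $f_a(x) = a$ satisfies $(g \circ f_a)(x) = g(a)$, so post-composition with $g$ corresponds under $\Phi$ to the map $a \mapsto g(a) = \operatorname{MC}(g)(a)$. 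The main point to get right is the $f$-derivation reduction together with the sign bookkeeping forced by $|x| = -1$; once that is in place, the statement is essentially a restatement of the universal property of $\mathbf{mc}$ set up above.
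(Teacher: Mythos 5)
Your proof is correct and takes essentially the same approach as the paper, which treats the whole statement as immediate from the construction of $\mathbf{mc}$: a Maurer--Cartan element $a\in A$ corresponds to the unique dg-algebra morphism $\mathbf{mc}\to A$ generated by $x\mapsto a$. You simply make explicit the freeness and $f$-derivation arguments (and the naturality check) that the paper leaves implicit, with the correct sign bookkeeping for $|x|=-1$.
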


Similarly in the case of dg-Lie algebras we have the following definition.
\begin{definition}
Let $(\mathfrak{g},[-,-],d)$ be a dg-Lie algebra. We say an element $x \in \mathfrak{g}$ of degree $-1$ is a Maurer-Cartan element if it satisfies the Maurer-Cartan equation
\begin{equation*}
    dx + \frac{1}{2}[x,x] = 0.
\end{equation*}
We denote the set of Maurer-Cartan elements of $\mathfrak{g}$ by $\operatorname{MC}_{\operatorname{Lie}}(\mathfrak{g})$.
\end{definition}
Similar to the associative case, we define the universal Maurer-Cartan Lie algebra $\mathbf{mc}_{\operatorname{Lie}}$ as the free Lie algebra $T_{\operatorname{Lie}} \langle x\rangle$, where  $\langle x\rangle$ denotes the vector space generated by the variable $x$ of degree $-1$, and given the differential induced from $d:x \mapsto -\frac{1}{2}[x,x]$. Noting that any Maurer-Cartan element $a\in \mathfrak{g}$ corresponds to the unique morphism in $\DGLA(\mathbf{mc}_{\operatorname{Lie}},\mathfrak{g})$ generated by $x\mapsto a$ we get the following proposition.
\begin{prop}
The functor $\operatorname{MC}_{\operatorname{Lie}}:\DGLA \to \operatorname{Set}$ taking a dg-Lie algebra to its set of Maurer-Cartan elements is representable by the universal Maurer-Cartan element $\mathbf{mc}_{\operatorname{Lie}}$.
\end{prop}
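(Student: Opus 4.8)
The plan is to exhibit a bijection $\DGLA(\mathbf{mc}_{\operatorname{Lie}}, \mathfrak{g}) \cong \operatorname{MC}_{\operatorname{Lie}}(\mathfrak{g})$ that is natural in $\mathfrak{g}$, so that $\mathbf{mc}_{\operatorname{Lie}}$ represents $\operatorname{MC}_{\operatorname{Lie}}$ via $\DGLA(\mathbf{mc}_{\operatorname{Lie}},-)$. The argument is the Lie analogue of the associative case and rests entirely on the universal property of the free Lie algebra.

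First I would forget the differentials and use the free--forgetful adjunction for graded Lie algebras. Since $\langle x\rangle$ is the one-dimensional graded vector space concentrated in degree $-1$, this adjunction identifies graded Lie algebra morphisms $T_{\operatorname{Lie}}\langle x\rangle \to \mathfrak{g}$ with degree $-1$ elements of $\mathfrak{g}$: each such morphism $\phi_a$ is uniquely determined by its value $\phi_a(x)=a$, and every degree $-1$ element $a\in\mathfrak{g}$ arises in this way.

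Next I would single out which of these graded morphisms are compatible with the differentials, i.e.\ are morphisms in $\DGLA$. The condition is $\phi_a\circ d = d_{\mathfrak{g}}\circ \phi_a$ as maps $T_{\operatorname{Lie}}\langle x\rangle\to\mathfrak{g}$. The one point requiring care is that it suffices to verify this on the generator $x$: both composites $\phi_a\circ d$ and $d_{\mathfrak{g}}\circ\phi_a$ are degree $-1$ maps satisfying the graded Leibniz rule relative to $\phi_a$ (because $d$ and $d_{\mathfrak{g}}$ are derivations and $\phi_a$ is a Lie morphism), so their difference is again a $\phi_a$-derivation; since $T_{\operatorname{Lie}}\langle x\rangle$ is generated as a Lie algebra by $x$, any $\phi_a$-derivation vanishing on $x$ vanishes identically, and hence the two composites agree everywhere as soon as they agree on $x$. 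Evaluating on $x$ and using $dx=-\tfrac{1}{2}[x,x]$ gives $-\tfrac{1}{2}[a,a]$ on the left and $d_{\mathfrak{g}}a$ on the right, so compatibility holds precisely when $d_{\mathfrak{g}}a+\tfrac{1}{2}[a,a]=0$, which is exactly the Maurer--Cartan equation. This identifies $\DGLA(\mathbf{mc}_{\operatorname{Lie}},\mathfrak{g})$ with $\operatorname{MC}_{\operatorname{Lie}}(\mathfrak{g})$.

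Finally I would confirm naturality: for $f:\mathfrak{g}\to\mathfrak{h}$ in $\DGLA$, post-composition sends $\phi_a$ to $f\circ\phi_a=\phi_{f(a)}$, which matches the induced map $a\mapsto f(a)$ on Maurer--Cartan sets (and $f(a)$ indeed satisfies the equation because $f$ intertwines differentials and brackets). The main --- and essentially only --- obstacle is the derivation argument in the third step; everything else is a direct application of the free Lie algebra adjunction, mirroring the associative statement verbatim with $d:x\mapsto -x^2$ and $d_Aa+a^2=0$ replaced by $d:x\mapsto -\tfrac{1}{2}[x,x]$ and $d_{\mathfrak{g}}a+\tfrac{1}{2}[a,a]=0$.
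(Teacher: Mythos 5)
Your proof is correct and takes essentially the same approach as the paper, whose own justification is the one-sentence observation preceding the proposition: the free Lie algebra adjunction identifies a Maurer--Cartan element $a\in\mathfrak{g}$ with the unique morphism $\mathbf{mc}_{\operatorname{Lie}}\to\mathfrak{g}$ generated by $x\mapsto a$, with compatibility with the differentials amounting exactly to the equation $d_{\mathfrak{g}}a+\tfrac{1}{2}[a,a]=0$. Your derivation argument showing that it suffices to check the chain-map condition on the generator $x$ simply makes explicit a step the paper leaves implicit.
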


Next we remind ourselves of the convolution coalgebra construction. 
    \begin{definition}
        Let $(C,\Delta_C,d_C)$ be a dg-coalgebra and $(A,\mu_A,d_A)$ a dg-algebra. Then the internal hom of dg-vector space, $\iVec(C,A)$ has the structure of a dg-algebra with multiplication defined as the convolution product
        \begin{equation*}
            f*g := C \xrightarrow{\Delta_C} C\otimes C \xrightarrow{f\otimes g} A\otimes A \xrightarrow{\mu_A} A,
        \end{equation*}
        and differential
        \begin{equation*}
            df := d_Af -(-1)^{|f|}fd_C.
        \end{equation*}
        We will refer to this construction as the convolution algebra of $C$ into $A$ and denote it by $\{C,A\}$.
    \end{definition}
    We caution the reader that our choice of notation for the convolution algebra conflicts with the notation used in \cite{Anel2013}. They use the notation $[-,-]$ for the convolution algebra while $\{-,-\}$ instead denotes the measuring coalgebra construction. 
    \begin{prop}\label{convcComLie}
    Let $C$ be a cocommutative non-unital coassociative dg-coalgebra and $\mathfrak{g}$ a dg-Lie algebra. Then the convolution algebra $\{C,\mathfrak{g}\}$, defined as above, takes the form of a dg-Lie algebra.
    \end{prop}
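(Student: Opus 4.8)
The plan is to equip the convolution complex $\{C,\mathfrak{g}\} = \iVec(C,\mathfrak{g})$ with the bracket obtained by replacing the multiplication $\mu_A$ in the convolution product by the Lie bracket $[-,-]_\mathfrak{g}\colon \mathfrak{g}\otimes\mathfrak{g}\to\mathfrak{g}$, that is
\begin{equation*}
    [f,g] := C \xrightarrow{\Delta_C} C\otimes C \xrightarrow{f\otimes g} \mathfrak{g}\otimes\mathfrak{g} \xrightarrow{[-,-]_\mathfrak{g}} \mathfrak{g},
\end{equation*}
while keeping the same differential $df = d_\mathfrak{g}f - (-1)^{|f|}fd_C$. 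I would then verify the three axioms of a dg-Lie algebra in turn: graded antisymmetry, the graded Jacobi identity, and the Leibniz rule for the differential.

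For antisymmetry I would use cocommutativity $\Delta_C = B_{C,C}\circ\Delta_C$ together with the graded naturality of the braiding, which introduces a Koszul sign $(-1)^{|f||g|}$ when the two maps are interchanged, and the antisymmetry $[-,-]_\mathfrak{g}\circ B_{\mathfrak{g},\mathfrak{g}} = -[-,-]_\mathfrak{g}$ of the Lie bracket. Chaining these three facts rewrites $[g,f]$ as $-(-1)^{|f||g|}[f,g]$, which is exactly the required graded antisymmetry.

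For Jacobi I would first use coassociativity to rewrite each iterated bracket as a composite of the form $[-,-]_\mathfrak{g}\circ(\Id\otimes[-,-]_\mathfrak{g})\circ(f\otimes g\otimes h)\circ\Delta^{(2)}$, where $\Delta^{(2)} = (\Delta\otimes\Id)\Delta = (\Id\otimes\Delta)\Delta$. The crucial observation is that cocommutativity forces $\Delta^{(2)}$ to be invariant under the braiding of its first two tensor factors, $(B_{C,C}\otimes\Id)\circ\Delta^{(2)} = \Delta^{(2)}$, since $B_{C,C}\Delta_C = \Delta_C$. Using graded naturality of the braiding to transport the transposition of $f$ and $g$ across $\Delta^{(2)}$ at the cost of a sign, the three terms of the convolution Jacobiator become $[-,-]_\mathfrak{g}$ applied to the three terms of the Jacobiator of $\mathfrak{g}$, all precomposed with the common map $(f\otimes g\otimes h)\circ\Delta^{(2)}$; the Jacobi identity for $\mathfrak{g}$ then makes them cancel. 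This symmetrization of $\Delta^{(2)}$ via cocommutativity, and the bookkeeping of Koszul signs it entails, is the main obstacle.

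Finally, the Leibniz rule $d[f,g] = [df,g] + (-1)^{|f|}[f,dg]$ holds because the convolution bracket is assembled entirely from chain maps — the comultiplication $\Delta_C$, the tensor of maps, and the Lie bracket $[-,-]_\mathfrak{g}$ — so it is itself a morphism of dg-vector spaces $\{C,\mathfrak{g}\}\otimes\{C,\mathfrak{g}\}\to\{C,\mathfrak{g}\}$, which is precisely the assertion that $d$ is a graded derivation of the bracket; this is the same verification as for the associative convolution product. Conceptually, all of this expresses that for cocommutative $C$ the binary convolution maps $f\otimes g\mapsto (f\otimes g)\circ\Delta_C$ exhibit $\iVec(C,-)$ as a lax symmetric semi-monoidal functor, and such functors transport Lie algebra structures; the absence of a counit on $C$ is harmless, since the Lie operad makes no reference to the monoidal unit.
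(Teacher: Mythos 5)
Your proposal is correct and takes essentially the same route as the paper: define the bracket as the convolution product built from $[-,-]_{\mathfrak{g}}$, then use cocommutativity of $\Delta_C$ (and of the iterated comultiplication $\Delta^2$) to pull antisymmetry and the Jacobi identity back from $\mathfrak{g}$. Your write-up is in fact more careful than the paper's terse Sweedler-notation check, spelling out the Koszul-sign bookkeeping via naturality of the braiding and verifying the Leibniz rule, both of which the paper leaves implicit.
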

\begin{proof}
    We check that $\{C,\mathfrak{g}\}$ is a Lie algebra. As $C$ is cocommutative, i.e. satisfying that $\Delta(c) = c^{(1)}\otimes c^{(1)}$, antisymmetry is satisfied by $[f,f]\circ \Delta_C(c) = 0$ for all $f\in \{C,\mathfrak{g}\}$. We also have the Jacobi identity as for all $f,g,h\in \{C,A\}$ we have that
    \begin{equation*}
        \left(f*(g*h) + g*(h*f) + h*(f*g)\right)\Delta^2(c) = 0,
    \end{equation*}
    since by cocommutativity it follows that $\Delta^2(c) = c^{(1)}\otimes c^{(1)} \otimes c^{(1)}$.
\end{proof}

    We will from now on consider the convolution algebra functor restricted to the category of conilpotent coalgebras
    \begin{equation*}
        \{-,-\}:\left(\cDGA\right)^{\operatorname{op}} \times \DGA_0 \to \DGA_0.
    \end{equation*}

    Let us now briefly recall Koszul duality and the bar and cobar constructions. We refer the reader to \cite{Positselski2011} for the proofs and further background. We will adopt the convention of using homological grading throughout. For an algebra $(A,m,d_A) \in \DGA_0$, we define the bar construction $BA$ as a graded coalgebra to be the conilpotent cofree coalgebra $T^{\operatorname{co}}(\Sigma^{-1} A)$  where $\Sigma^{-1}A$ is the shifted complex with $(\Sigma^{-1}A)_n := A_{n-1}$. The bar construction $BA$ is then given the differential induced by $d_A + m$.

    Conversely, given a coalgebra $(C,\Delta,d_C)$ we define the cobar construction $\Omega C$ as a graded algebra to be the free algebra $T\Sigma C$ with differential induced from $d_C + \Delta$. 

    The bar and cobar functors can be shown to be adjoint by
    \begin{equation*}
        \DGA_0(\Omega C, A) \cong \operatorname{MC}(\{C,D\})\cong \cDGA(C,BA).
    \end{equation*}
    Furthermore when considering categories $\DGA_0$ and $\cDGA$ with their standard model structures the above adjunction upgrades to a Quillen equivalence. Explicitly the model structure on $\DGA_0$ is given as follows. We say a morphism in $\DGA_0$ is a
    \begin{enumerate}[i)]
        \item weak equivalence if it is a quasi-isomorphism,
        \item fibration if it is surjective,
        \item cofibration if it has the left lifting property with respect to all acyclic fibrations.
    \end{enumerate}
    The category of $\cDGA$ admits the left transferred model structure over the above adjunction. We say that a morphism in $\cDGA$ is a
    \begin{enumerate}[i)]
        \item weak equivalence if it belongs to the minimal class of morphisms generated by filtered quasi-isomorphism under the 2 out of 3 property,
        \item cofibration if it is injective,
        \item fibration if it has the right lifting property with respect to all acyclic cofibrations.
    \end{enumerate}
    For a proof of the existence of these model structures as well as the Quillen equivalence we refer the reader to \cite{Positselski2011}.  

    The story in the com-Lie case of Koszul duality is very similar but we will in this case require that the ground field $k$ has characteristic zero.  This is needed for the existence of the model structure on $\DGLA$ and $\cCDGA$ as shown in \cite{Hinich2001}. As in the associative case we say that a morphism in $\DGLA$ is a
    \begin{enumerate}[i)]
        \item weak equivalence if it is a quasi-isomorphism,
        \item fibration if it is surjective,
        \item cofibration if it has the left lifting property with respect to all acyclic fibrations.
    \end{enumerate}
    Similarly we say that a morphism in $\cCDGA$ is a
    \begin{enumerate}[i)]
        \item weak equivalence if it belongs to the minimal class of morphisms generated by filtered quasi-isomorphism under the 2 out of 3 property,
        \item cofibration if it is injective,
        \item fibration if it has the right lifting property with respect to all acyclic cofibrations.
    \end{enumerate}

    For a Lie-algebra $(\mathfrak{g},[-,-],d_{\mathfrak{g}})\in \DGLA$ we define the bar construction $B \mathfrak{g}$ to be $S^{co}(\Sigma^{-1}\mathfrak{g})$ with differential induced from $d_{\mathfrak{g}} + [-,-]$. In the other direction for a cocommutative coalgebra $(C,\Delta,d_C) \in \cCDGA$ we define the cobar construction $\Omega C$ as $T_{\operatorname{Lie}}(\Sigma C)$ with differential induced by $d_C + \Delta$. These functors are Quillen equivalent by the adjunction
\begin{equation*}
    \DGLA(\Omega C, \mathfrak{g}) \cong \operatorname{MC}_{\operatorname{Lie}}(\{C,\mathfrak{g}\})\cong \cCDGA(C,B\mathfrak{g}),
\end{equation*}
as shown in \cite{Hinich2001}. Note in particular that $\{C,A\}$ has the structure of a dg Lie-algebra by \Cref{convcComLie}.

\section{Semi-monoidal categories, semi-module categories, and semi-enrichments}
Categories that are monoidal except missing a unit, known as semi-monoidal, semi-groupal or non-unital monoidal in the literature have previously been studied in e.g. \cite{Kock2008}, \cite{Abuhlail2013}, and \cite{Lu2019}. We will introduce the notion of semi-module categories over a semi-monoidal category, fully analogous to the definition in the unital case. Working in the model category setting this will lead us to the definition of semi-monoidal model categories and semi-module model categories. 

We will take the definition of (symmetric) semi-monoidal categories, semi-monoidal functors, and semi-monoidal natural transformations to be fully analogous to the monoidal ones by dropping the unit and unit axioms at every step. Similarly, we take take the definitions of semi-modules, semi-module functors and semi-module natural transformations to be those found in appendix B of \cite{Anel2013} or chapter 4 in \cite{Hovey1999} by dropping the unit axioms. When working over a semi-monoidal category $\mathcal{V}$ we will commonly use the terminology $\mathcal{V}$-module to mean a semi-module over $\mathcal{V}$ etc.

\begin{definition}
    A semi-monoidal category $(\mathcal{V},\otimes,a)$ consists of a category $\mathcal{V}$ together with a functor
    \begin{equation*}
        \otimes:\mathcal{V}\times \mathcal{V} \to \mathcal{V},
    \end{equation*}
    and a natural isomorphism
    \begin{equation*}
        a:(X \otimes Y) \otimes Z \to X \otimes (Y\otimes Z),
    \end{equation*}
    satisfying the coherence diagram
    \begin{equation*}
                \begin{tikzcd}
                    &((X\otimes Y) \otimes Z) \otimes W \arrow[dl,"a_{X,Y,Z}\otimes \Id_W"'] \arrow[dr,"a_{X\otimes Y,Z,W}"]\\
                    (X\otimes (Y\otimes Z))\otimes W \arrow[d,"a_{X,Y\otimes Z,W}"'] && (X\otimes Y)\otimes (Z\otimes W) \arrow[d,"a_{X,Y,Z\otimes W}"]\\
                    X \otimes ((Y\otimes Z)\otimes W) \arrow[rr,"\Id_X\otimes a_{Y,Z,W}"'] && X\otimes (Y\otimes (Z\otimes W))
                \end{tikzcd}
            \end{equation*}
            for all $X,Y,Z,W \in \mathcal{V}$.
\end{definition}

\begin{definition}
    A symmetric semi-monoidal category $(\mathcal{V},\otimes, a,c)$ is a semi-monoidal category $(\mathcal{V},\otimes,a)$ together with a natural isomorphism $c:X\otimes Y \to Y\otimes X$ satisfying that
     \begin{equation*}
            \begin{tikzcd}
                X \otimes Y \arrow[r,"c_{X,Y}"] \arrow[dr,"\Id_{X\otimes Y}"'] & Y \otimes X \arrow[d,"c_{Y,X}"]\\
                & X\otimes Y
            \end{tikzcd}
        \end{equation*}
        and
    \begin{equation*}
            \begin{tikzcd}
                (X \otimes Y) \otimes Z \arrow[r,"a_{X,Y,Z}"] \arrow[d,"c_{X,Y}\otimes \Id_Z"'] & X \otimes (Y\otimes Z) \arrow[r,"c_{X,Y\otimes Z}"] & (Y\otimes Z)\otimes X \arrow[d,"a_{Y,Z,X}"]\\
                (Y\otimes X) \otimes Z \arrow[r,"a_{Y,X,Z}"'] & Y\otimes(X\otimes Z) \arrow[r,"\Id_Y \otimes c_{X,Z}"'] & Y \otimes (Z \otimes X).
            \end{tikzcd}
        \end{equation*}
        commute for all $X,Y,Z \in \mathcal{V}$.
\end{definition}

\begin{definition}
    We say that a semi-monoidal category $(\mathcal{V},\otimes)$ is (left) closed if
    there exists a functor
    \begin{equation*}
        \underline{\mathcal{V}}:\mathcal{V}^{\operatorname{op}} \times \mathcal{V} \to \mathcal{V},
    \end{equation*}
    such that $\underline{\mathcal{V}}(Y,-)$ is right adjoint to $-\otimes Y$. The functor $\underline{\mathcal{V}}$ is known as the internal hom functor.
\end{definition}

\begin{definition}
    Let $(\mathcal{V},\otimes,a)$ and $(\mathcal{V},\otimes',a')$ be two semi-monoidal categories. Then a (strong) semi-monoidal functor $(F,m)$ from $\mathcal{V}$ to $\mathcal{V}'$ consists of a functor $F:\mathcal{V}\to \mathcal{V}'$ and a natural isomorphism
    \begin{equation*}
        m:F(X) \otimes' F(Y) \to F(X\otimes Y),
    \end{equation*}
    satisfying the coherence diagram
    \begin{equation*}
            \begin{tikzcd}
                (F(X)\otimes' F(Y))\otimes' F(Z) \arrow[r,"a'"] \arrow[d,"m\otimes' \Id_{F(Z)}"'] & F(X) \otimes' (F(Y)\otimes' F(Z)) \arrow[d,"\Id_{F(X)} \otimes' m"]\\
                F(X\otimes Y) \otimes' F(Z) \arrow[d,"m"'] & F(X) \otimes' F(Y\otimes Z) \arrow[d,"m"]\\
                F((X\otimes Y)\otimes Z) \arrow[r,"F(a)"] & F(X \otimes (Y\otimes Z))
            \end{tikzcd}
        \end{equation*}
        for all $X,Y,Z \in \mathcal{V}$.
\end{definition}

\begin{definition}
    Let $(F,m)$ and $(F',m')$ be two semi-monoidal functors from $(\mathcal{V},\otimes)$ to $(\mathcal{V}',\otimes')$. A semi-monoidal natural transformation $\eta:F\to F'$ is a natural transformation such that
    \begin{equation*}
        \begin{tikzcd}
            F(X) \otimes' F(Y) \arrow[r,"\eta \otimes' \eta"] \arrow[d,"m"'] & F'(X) \otimes' F'(Y) \arrow[d,"m'"]\\
            F(X\otimes Y) \arrow[r,"\eta"] & F'(X\otimes Y)
        \end{tikzcd}
    \end{equation*}
    commutes for all $X,Y\in \mathcal{V}$.
\end{definition}

We could at this point further proceed with the theory of semi-monoidal categories, by defining semi-monoidal adjunctions and semi-monoidal equivalences etc. As we will not explicitly need them, we instead proceed with the definition of semi-module categories.
\begin{definition}
    Let $(\mathcal{V},\otimes,a)$ be a symmetric semi-monoidal category. A (left) $\mathcal{V}$-module is a category $\mathcal{C}$ together with a functor
    \begin{equation*}
        - \rhd -:\mathcal{V} \times \mathcal{C} \to \mathcal{C}
    \end{equation*}
    and a natural isomorphism
    \begin{equation*}
        \alpha:(X\otimes Y)\rhd A \to X \rhd (Y \rhd A),
    \end{equation*}
    such that
    \begin{equation*}
                \begin{tikzcd}
                    &((X\otimes Y) \otimes Z) \rhd A \arrow[dl,"a"'] \arrow[dr,"\alpha"]\\
                    (X\otimes (Y\otimes Z))\rhd A \arrow[d,"\alpha"'] && (X \otimes Y) \rhd (Z\rhd A) \arrow[d,"\alpha"]\\
                    X\rhd ((Y\otimes Z)\rhd A) \arrow[rr,"\Id_X\rhd \alpha"'] && X \rhd (Y\rhd (Z\rhd A))
                \end{tikzcd}
            \end{equation*}
            commutes for all $X,Y,Z\in \mathcal{V}$ and $A\in \mathcal{C}$.
\end{definition}
A module over the opposite symmetric semi-monoidal category $(\mathcal{V}^{op},\otimes)$ is known as a $\mathcal{V}$-opmodule. We should at this point note that Mac Lane's coherence theorem holds in the case of semi-monoidal categories and the corresponding version for module categories similarly holds for semi-module categories. 

\begin{definition}
We say that a $\mathcal{V}$-module $\mathcal{C}$ is right closed if there exists a functor
\begin{equation*}
    [-,-]:\mathcal{V}^{\operatorname{op}}\times \mathcal{C} \to \mathcal{C}
\end{equation*}
such that $[X,-]$ is right adjoint to $X\rhd -$. Similarly we say that $\mathcal{C}$ is left closed if there exists a functor
\begin{equation*}
    \overline{\mathcal{C}}(-,-):\mathcal{C}^{\operatorname{op}} \times \mathcal{C} \to \mathcal{V}
\end{equation*}
such that $\overline{\mathcal{C}}(A,-)$ is right adjoint to $-\rhd A$. If $\mathcal{C}$ is both left- and right closed we say it is closed.
\end{definition}
Particularly in the closed case, we will refer to the $[-,-]$ functor as the cotensoring functor and $\overline{\mathcal{C}}(-,-)$ as the enrichment functor. Note that the cotensoring functor $[-,-]$ makes $\mathcal{C}$ into a $\mathcal{V}$-opmodule. The motivation for referring to $\overline{\mathcal{C}}(-,-)$ as an enrichment is that in the monoidal case the notion of a closed $\mathcal{V}$-module is equivalent to a tensored and cotensored $\mathcal{C}$-category.

\begin{definition}
    Let $(\mathcal{V},\otimes)$ be a symmetric semi-monoidal category and $(\mathcal{C},\rhd,\alpha)$ and $(\mathcal{D},\rhd',\alpha')$ two $\mathcal{V}$-modules. A (strong) $\mathcal{V}$-module functor $(F,m)$ from $\mathcal{C}$ to $\mathcal{D}$ consists of a functor $F:\mathcal{C} \to \mathcal{D}$ together with a natural isomorphism
    \begin{equation*}
        m:X\rhd' FA \to F(X\rhd A),
    \end{equation*}
    such that
    \begin{equation*}
        \begin{tikzcd}
            (X\otimes Y) \rhd' FA\arrow[r,"m"] \arrow[d,"\alpha'"'] & F((X\otimes Y)\rhd A) \arrow[dd,"F\alpha"]\\
            X \rhd' (Y\rhd' FA) \arrow[d,"\Id_X\rhd' m"']\\
            X\rhd' F(Y\rhd A) \arrow[r,"m"] & F(X\rhd (Y\rhd A))
        \end{tikzcd}
    \end{equation*}
    commutes for all $X,Y\in \mathcal{V}$ and $A\in \mathcal{C}$. 
\end{definition}

\begin{prop}
    Let $(\mathcal{C},\rhd)$ be a $\mathcal{V}$-module. Then for any $Y\in \mathcal{V}$ the tensoring functor $Y\rhd -$ is itself a $\mathcal{V}$-module functor. 
\end{prop}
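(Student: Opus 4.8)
The plan is to produce the required natural isomorphism explicitly and then check the single coherence axiom for a $\mathcal{V}$-module functor, regarding $\mathcal{C}$ as a module over itself via $(\rhd,\alpha)$ in both the source and the target. Fix $Y\in\mathcal{V}$ and write $F = Y\rhd-$. A $\mathcal{V}$-module functor structure on $F$ amounts to a natural isomorphism $m_{X,A}\colon X\rhd FA \to F(X\rhd A)$, that is $m_{X,A}\colon X\rhd(Y\rhd A)\to Y\rhd(X\rhd A)$, and the only candidate built from the available data is the composite
\[
X \rhd (Y \rhd A) \xrightarrow{\alpha_{X,Y,A}^{-1}} (X\otimes Y)\rhd A \xrightarrow{c_{X,Y}\rhd \Id_A} (Y\otimes X)\rhd A \xrightarrow{\alpha_{Y,X,A}} Y \rhd (X\rhd A),
\]
which reassociates, applies the braiding $c$ of the symmetric semi-monoidal structure, and reassociates back.

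Since $\alpha$, $\alpha^{-1}$, and $c$ are natural isomorphisms and $-\rhd-$ is a bifunctor, $m_{X,A}$ is a composite of natural isomorphisms and is therefore itself a natural isomorphism in both $X$ and $A$; this settles the underlying functor and the invertibility requirement with essentially no work.

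The substance of the proof is the coherence hexagon for module functors, which in this case reads
\[
(\Id_Y\rhd \alpha_{X_1,X_2,A})\circ m_{X_1\otimes X_2,\,A}= m_{X_1,\,X_2\rhd A}\circ(\Id_{X_1}\rhd m_{X_2,A})\circ \alpha_{X_1,X_2,\,Y\rhd A}.
\]
I would verify this by expanding both legs using the definition of $m$ and then reducing to three inputs: the module pentagon (to bring all occurrences of $\alpha$ and $\alpha^{-1}$ into a common normal form), the naturality of $\alpha$ and of $c$, and the hexagon axiom for the braiding. Concretely, the left leg produces the single braiding $c_{X_1\otimes X_2,\,Y}$, whereas the right leg produces the two separate braidings $c_{X_2,Y}$ and $c_{X_1,Y}$; the equality of the two resulting morphisms is exactly the content of the symmetric hexagon, in the form expressing the braiding out of a tensor factor $c_{X_1\otimes X_2,Y}$ as the composite of $c_{X_1,Y}$ and $c_{X_2,Y}$. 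This variant is not literally the diagram in the definition of a symmetric semi-monoidal category but follows from it together with the $c_{Y,X}\circ c_{X,Y}=\Id$ triangle.

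The main obstacle is purely organizational: the chase interleaves several associators with two braidings, and keeping the bracketings aligned is error-prone. The cleanest way to discharge it is to appeal to the coherence theorem for symmetric semi-monoidal categories and its semi-module analogue, noted above to hold in this setting: both legs are assembled entirely from $a$, $\alpha$, $c$ and identities, and they carry the object word $(X_1,X_2,Y,A)$ to $(Y,X_1,X_2,A)$ by the same underlying permutation, namely moving $Y$ to the front past $X_1$ and $X_2$, so coherence forces them to coincide. I would present the hexagon-based chase as the explicit argument and record the coherence argument as a conceptual shortcut.
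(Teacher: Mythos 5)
Your proposal is correct and takes essentially the same approach as the paper: the paper defines $m$ as exactly the same composite $X \rhd (Y \rhd A) \xrightarrow{\sim} (X\otimes Y) \rhd A \xrightarrow{\sim} (Y\otimes X) \rhd A \xrightarrow{\sim} Y\rhd (X\rhd A)$ and stops there, leaving the coherence axiom implicit. Your additional verification of the module-functor coherence diagram (via the braiding hexagon plus naturality, or via the coherence theorem the paper records for semi-monoidal and semi-module categories) fills in precisely what the paper omits.
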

\begin{proof}
    We take the natural isomorphism,
    \begin{equation*}
        m: X \rhd (Y \rhd A) \to Y \rhd (X\rhd A),
    \end{equation*}
    to be the composition
    \begin{equation*}
        X \rhd (Y \rhd A) \xrightarrow{\sim} (X\otimes Y) \rhd A \xrightarrow{\sim} (Y\otimes X) \rhd A \xrightarrow{\sim} Y\rhd (X\rhd A).
    \end{equation*}
\end{proof}

\begin{definition}
        Let $\mathcal{V}$ be a symmetric semi-monoidal category and $(F,m)$ and $(F',m')$ be two $\mathcal{V}$-module functors from $(\mathcal{C},\rhd)$ to $(\mathcal{D},\rhd')$. A $\mathcal{V}$-module natural transformation is a natural transformation $\eta:F\to F'$ such that
        \begin{equation*}
            \begin{tikzcd}
                    X \rhd' FA \arrow[r,"m"] \arrow[d,"\Id_X\rhd' \eta_A"'] & F(X\rhd A) \arrow[d,"\eta_{X\rhd' A}"]\\
                    X\rhd' F'A \arrow[r,"m'"] & F'(X\rhd A)
            \end{tikzcd}
        \end{equation*}
        commutes for all $X\in \mathcal{V}$ and $A\in \mathcal{C}$.
    \end{definition}
    We define the concept of semi-module adjunctions and equivalences as follows.
    \begin{definition}
        A $\mathcal{V}$-module adjunction $(F,U,\phi,m)$ is an adjunction $(F,U,\phi)$ such that $(F,m)$ is a $\mathcal{V}$-module functor. 
    \end{definition}
    \begin{definition}
        A $\mathcal{V}$-module equivalence $(F,U,\phi,m)$ is an equivalence of categories $(F,U,\phi)$ such that $(F,m)$ is a $\mathcal{V}$-module functor.
    \end{definition}
    Note that these are the notion of adjunction and equivalence in the 2-category of $\mathcal{V}$-module categories and \emph{lax} $\mathcal{V}$-module functors, i.e. where the natural transformation is not required to be an isomorphism.

     Finally we would like to mention that we can define the concept of what we call semi-enriched categories by extracting the properties of the enrichment functor in a closed semi-module category. Proceeding similarly as in the monoidal case we get the definition of such a structure. 
    \begin{definition}
    Let $(\mathcal{V},\otimes,a)$ be a symmetric semi-monoidal category. A semi-enriched $\mathcal{V}$-category is a category $\mathcal{C}$ together with a functor
    \begin{equation*}
        \overline{\mathcal{C}(-,-)}:\mathcal{C}^{\operatorname{op}} \times \mathcal{C} \to \mathcal{V}
    \end{equation*}
    and a natural transformation
    \begin{equation*}
        \mathbf{c}:\overline{\mathcal{C}}(B,C) \otimes \overline{\mathcal{C}}(A,B) \to \overline{\mathcal{C}}(A,C)
    \end{equation*}
    such that
    \begin{equation*}
        \adjustbox{scale=0.8,center}{
                \begin{tikzcd}
                    &(\overline{\mathcal{C}}(C,D) \otimes \overline{\mathcal{C}}(B,C)) \otimes \overline{\mathcal{C}}(A,B) \arrow[dl,"a"'] \arrow[dr,"\mathbf{c}\otimes \Id"]\\
                    \overline{\mathcal{C}}(C,D) \otimes (\overline{\mathcal{C}}(B,C) \otimes \overline{\mathcal{C}}(A,B)) \arrow[d,"\Id \otimes \mathbf{c}"'] && \overline{\mathcal{C}}(B,D) \otimes \overline{\mathcal{C}}(A,B) \arrow[d,"\mathbf{c}"]\\
                    \overline{\mathcal{C}}(C,D) \otimes \overline{\mathcal{C}}(A,C) \arrow[rr,"\mathbf{c}"'] && \overline{\mathcal{C}}(A,D)
                \end{tikzcd}
                }
            \end{equation*}
    commutes.
\end{definition}
Note this structure is different from that of semi-categories, i.e. categories without a unit morphism. Most notably, the underlying category is an essential part of the structure.  This is necessary as if we had taken a definition as for semi-categories, i.e. enriched categories without units, as has been studied in \cite{Moens2002} and \cite{Stubbe2005} we would have no way of recovering an underlying category. In particular, going to back to our definition, the enriched hom objects don't necessarily provide information about the hom set of the underlying category.

\section{Semi-monoidal model categories and semi-module model categories}
We take the definitions of semi-monoidal model categories and module model categories to be analogous to the corresponding definitions with units as found in \cite{Hovey1999}.

\begin{definition}
    Let $(\mathcal{V},\otimes)$ be a closed symmetric semi-monoidal category. If furthermore $\mathcal{V}$ is a model category and the tensor functor,
    \begin{equation*}
        \otimes:\mathcal{V} \times \mathcal{V} \to \mathcal{V},
    \end{equation*}
    is a (left) Quillen bifunctor, then we say that $\mathcal{V}$ is a semi-monoidal model category.
\end{definition}

\begin{definition}
    Let $(\mathcal{V},\otimes)$ be a semi-monoidal model category and\\
    $(\mathcal{C},\rhd)$ a closed $\mathcal{V}$-module. If furthermore $\mathcal{C}$ is a model category and the tensoring functor,
    \begin{equation*}
        \rhd:\mathcal{V} \times \mathcal{C} \to \mathcal{C},
    \end{equation*}
    is a (left) Quillen bifunctor, then we say that $\mathcal{C}$ is a $\mathcal{V}$-module model category. 
\end{definition}

\begin{definition}
    Let $\mathcal{C}$ and $\mathcal{D}$ be semi-monoidal model categories. Then a semi-monoidal Quillen adjunction $(F,U,\phi,m)$ is a Quillen adjunction\\ $(F,U,\phi)$ such that $(F,m)$ is a semi-monoidal functor. 
\end{definition}

\begin{definition}
    Let $\mathcal{V}$ be a semi-monoidal model category and $\mathcal{C}$ and $\mathcal{D}$ be $\mathcal{V}$-module model categories. A $\mathcal{V}$-module Quillen adjunction $(F,U,\phi,\mu)$ is a Quillen adjunction such that $(F,\mu)$ is a $\mathcal{V}$-module functor.
\end{definition}

\begin{example}[Reduced simplicial sets]\label{examplereducedsSet}
    The category of reduced simplicial sets $\operatorname{sSet}_0$, under the smash product $\wedge$, gives an example of a semi-monoidal model category that is not monoidal. To see this, consider the coreflective adjunction
    \begin{equation*}
        \begin{tikzcd}
            \operatorname{sSet}_0 \arrow[r,shift left=1.1ex,"\iota"] \arrow[r,phantom,"\perp"] &  \operatorname{sSet}^{*/} \arrow[l,shift left=1.1ex,"\mathscr{R}"],
        \end{tikzcd}
    \end{equation*}
    where $\iota$ is the inclusion into the category of pointed simplicial sets, $\operatorname{sSet}^{*/}$, and it's adjoint $\mathscr{R}$ takes a pointed simplicial set to the subsimplicial set whose $n$-cells are those who have the marked point as $0$-cells.
    
    The category of reduced simplicial sets admits the left transferred model structure, from the classical model structure on $\operatorname{sSet}^{*/}$ by the above adjunction, as shown by Proposition 6.2 in \cite{Goerss2009}. As the wedge product of pointed simplicial sets restricts to a functor
    \begin{equation*}
        \wedge:\operatorname{sSet}_0 \times \operatorname{sSet}_0 \to \operatorname{sSet}_0,
    \end{equation*}
    we see that $\operatorname{sSet}_0$ is closed semi-monoidal with internal hom functor
    \begin{equation*}
        \underline{\operatorname{sSet}}_0(-,-) := \mathscr{R} \underline{\operatorname{sSet}}^{*/}(-,-).
    \end{equation*}
    Note that the unit, $*\sqcup *$, of $\operatorname{sSet}^{*/}$ is not reduced so $\operatorname{sSet}_0$ is not monoidal. 
    \end{example}

    As a consequence of $\operatorname{sSet}_0$ being semi-monoidal and the coreflective Quillen adjunction to $\operatorname{sSet}^{*/}$ we have get the following.
    \begin{cor}
        Any pointed simplicial model category $\mathcal{M}$ is canonically also a $\operatorname{sSet}_0$-module model category.
    \end{cor}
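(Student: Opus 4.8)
The plan is to realise $\mathcal{M}$ as an $\operatorname{sSet}_0$-module by \emph{restriction of scalars} along the inclusion $\iota \colon \operatorname{sSet}_0 \to \operatorname{sSet}^{*/}$. First I would unwind the hypothesis: saying that $\mathcal{M}$ is a pointed simplicial model category means precisely that $\mathcal{M}$ is a closed $\operatorname{sSet}^{*/}$-module model category, with tensoring given by the smash product $\wedge \colon \operatorname{sSet}^{*/}\times\mathcal{M}\to\mathcal{M}$, a cotensoring, and an enrichment $\underline{\mathcal{M}}(-,-)\colon \mathcal{M}^{\operatorname{op}}\times\mathcal{M}\to\operatorname{sSet}^{*/}$, the tensoring being a left Quillen bifunctor. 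Two features of $\iota$ are then the whole engine of the proof. On the one hand $\iota$ is a \emph{strong semi-monoidal} functor, since (as recorded in \Cref{examplereducedsSet}) the smash of two reduced simplicial sets is reduced, so $\iota K \wedge \iota L \cong \iota(K\wedge L)$. On the other hand, because the model structure on $\operatorname{sSet}_0$ is left transferred along $\iota\dashv\mathscr{R}$, the functor $\iota$ creates cofibrations and weak equivalences; in particular it preserves cofibrations and acyclic cofibrations, so $\iota$ is a left Quillen functor, and being a left adjoint it preserves all colimits.

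With these in hand I would define the $\operatorname{sSet}_0$-module structure on $\mathcal{M}$ by $K \rhd A := \iota K \wedge A$. The module associator $\alpha\colon (K\otimes L)\rhd A \to K \rhd (L\rhd A)$ is assembled from the associator of the $\operatorname{sSet}^{*/}$-action on $\mathcal{M}$ together with the semi-monoidal isomorphism $m\colon \iota K \wedge \iota L \cong \iota(K\wedge L)$. That the required coherence hexagon commutes is the standard restriction-of-scalars principle: it follows formally from the module-pentagon for the $\operatorname{sSet}^{*/}$-action together with the semi-monoidal coherence of $(\iota,m)$, so no genuinely new calculation is needed beyond pasting these two coherences.

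Next I would establish closedness. For the cotensoring I take $[K,A]$ to be the $\operatorname{sSet}^{*/}$-cotensor of $A$ by $\iota K$; the adjunction $\operatorname{Hom}_{\mathcal{M}}(K\rhd B, A)\cong \operatorname{Hom}_{\mathcal{M}}(B,[K,A])$ is immediate from $K\rhd B = \iota K \wedge B$ and the right-closedness of the $\operatorname{sSet}^{*/}$-module. For the enrichment, which must take values in $\operatorname{sSet}_0$ and not merely in $\operatorname{sSet}^{*/}$, this is exactly where the coreflector $\mathscr{R}$ is needed: set $\overline{\mathcal{M}}(A,B) := \mathscr{R}\,\underline{\mathcal{M}}(A,B)$. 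Then for $K\in\operatorname{sSet}_0$ one chains the enrichment adjunction of the $\operatorname{sSet}^{*/}$-module with the coreflective adjunction $\iota\dashv\mathscr{R}$, namely $\operatorname{Hom}_{\mathcal{M}}(\iota K \wedge A, B)\cong \operatorname{Hom}_{\operatorname{sSet}^{*/}}(\iota K,\underline{\mathcal{M}}(A,B))\cong \operatorname{Hom}_{\operatorname{sSet}_0}(K,\mathscr{R}\,\underline{\mathcal{M}}(A,B))$, exhibiting $\overline{\mathcal{M}}(A,-)$ as right adjoint to $-\rhd A$. Thus $\mathcal{M}$ is a closed $\operatorname{sSet}_0$-module, which is precisely the content of the phrase \emph{coreflective Quillen adjunction} in the statement.

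Finally I would verify the model-categorical compatibility, i.e.\ that $\rhd$ is a left Quillen bifunctor. Given a cofibration $i\colon K\to L$ in $\operatorname{sSet}_0$ and $j\colon A\to B$ in $\mathcal{M}$, the pushout-product of $i$ and $j$ with respect to $\rhd$ is computed in $\mathcal{M}$; since $\iota$ preserves pushouts, this map is canonically identified with the $\wedge$-pushout-product of $\iota i$ and $j$. Now $\iota i$ is a cofibration in $\operatorname{sSet}^{*/}$ because $\iota$ is left Quillen, $j$ is a cofibration in $\mathcal{M}$, and $\wedge$ is a Quillen bifunctor by hypothesis, so the pushout-product is a cofibration; it is acyclic whenever $j$ is, or whenever $i$ is (using that $\iota$ reflects weak equivalences, so $i$ acyclic forces $\iota i$ acyclic). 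I expect the main obstacle to be purely bookkeeping rather than conceptual: one must check that the restricted associator genuinely satisfies the semi-module coherence and that the cotensor/enrichment adjunctions are natural, and one must carefully identify the $\rhd$-pushout-product with the $\wedge$-pushout-product after applying $\iota$. Both reductions hinge on the same two facts, that $\iota$ is strong semi-monoidal and colimit-preserving, so once these are isolated the verification is routine.
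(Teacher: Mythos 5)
Your proposal is correct and is exactly the argument the paper intends: the paper states this corollary as an immediate consequence of $\operatorname{sSet}_0$ being closed semi-monoidal and the coreflective Quillen adjunction $\iota \dashv \mathscr{R}$, which is precisely your restriction-of-scalars construction with $K \rhd A := \iota K \wedge A$, cotensor by $\iota K$, enrichment $\mathscr{R}\,\underline{\mathcal{M}}(-,-)$, and the pushout-product verification via the left Quillen property of $\iota$. The only cosmetic point is that you invoke ``$\iota$ reflects weak equivalences'' where you mean that $\iota$ \emph{creates} (hence preserves) them in the left-transferred structure, and that the identification of the two pushout-products needs no colimit-preservation of $\iota$ at all, since both pushouts are computed in $\mathcal{M}$ over literally the same diagram.
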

    
    \begin{cor}
        Let $\mathcal{M}$ be a pointed model category. Then its homotopy category can be given the structure of a closed $\operatorname{Ho}(\operatorname{sSet}_0)$-module.
    \end{cor}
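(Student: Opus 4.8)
\emph{Proof proposal.} The plan is to obtain a closed module structure over pointed spaces from the standard framing machinery, and then transport it along the coreflective adjunction of \Cref{examplereducedsSet}. By the framing construction of \cite{Hovey1999}, the homotopy category of any pointed model category $\mathcal{M}$ carries a canonical closed $\operatorname{Ho}(\operatorname{sSet}^{*/})$-module structure, where $\operatorname{sSet}^{*/}$ is equipped with the smash product. I will write $\rhd_*$, $[-,-]_*$, and $\overline{\mathcal{M}}_*(-,-)$ for the tensoring, cotensoring, and enrichment of this structure. The idea is to restrict scalars along the derived inclusion $\operatorname{Ho}(\operatorname{sSet}_0) \to \operatorname{Ho}(\operatorname{sSet}^{*/})$ and then use the coreflection $\mathscr{R}$ to recover closedness.

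First I would check that $\iota$ descends to a strong semi-monoidal functor on homotopy categories. Since the smash product of two reduced simplicial sets is again reduced, $\iota$ is a strict (hence strong) semi-monoidal functor for the smash. Moreover $\iota$ is left Quillen: because the model structure on $\operatorname{sSet}_0$ is left transferred along $\iota \dashv \mathscr{R}$, the functor $\iota$ preserves (acyclic) cofibrations by construction and in fact preserves all weak equivalences. As $\operatorname{sSet}_0$ is a semi-monoidal model category by \Cref{examplereducedsSet} and $\operatorname{sSet}^{*/}$ is a monoidal model category, deriving yields a strong semi-monoidal functor $\mathbf{L}\iota = \operatorname{Ho}(\iota): \operatorname{Ho}(\operatorname{sSet}_0) \to \operatorname{Ho}(\operatorname{sSet}^{*/})$; the comparison is an isomorphism since $\iota$ is strict monoidal and left Quillen, hence preserves the cofibrant objects on which the derived smash is computed. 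Restriction of scalars along $\operatorname{Ho}(\iota)$ then endows $\operatorname{Ho}(\mathcal{M})$ with a $\operatorname{Ho}(\operatorname{sSet}_0)$-module structure, with action
\begin{equation*}
    X \rhd A := \iota X \rhd_* A.
\end{equation*}
The associativity coherence holds formally, as the module pentagon only involves the associators and the semi-monoidal comparison of $\operatorname{Ho}(\iota)$, none of which reference a unit; this is precisely the point at which a semi-monoidal rather than monoidal functor suffices.

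It then remains to produce the closed structure. For right-closedness, the cotensor $[\iota X, -]_*$ of the $\operatorname{Ho}(\operatorname{sSet}^{*/})$-module is visibly right adjoint to $X \rhd - = \iota X \rhd_* -$, so it serves as $[X,-]$. For left-closedness I would set $\overline{\mathcal{M}}_0(A,B) := \mathbf{R}\mathscr{R}\,\overline{\mathcal{M}}_*(A,B)$ using the derived coreflection, and obtain the required adjunction by composing the enriched adjunction over $\operatorname{Ho}(\operatorname{sSet}^{*/})$ with the derived adjunction $\mathbf{L}\iota \dashv \mathbf{R}\mathscr{R}$:
\begin{align*}
    \operatorname{Ho}(\mathcal{M})(\iota X \rhd_* A, B) &\cong \operatorname{Ho}(\operatorname{sSet}^{*/})(\iota X, \overline{\mathcal{M}}_*(A,B))\\
    &\cong \operatorname{Ho}(\operatorname{sSet}_0)(X, \mathbf{R}\mathscr{R}\,\overline{\mathcal{M}}_*(A,B)),
\end{align*}
naturally in all variables.

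The hard part will be the bookkeeping at the derived level: verifying that $\operatorname{Ho}(\iota)$ is genuinely semi-monoidal requires knowing that the derived smash of reduced simplicial sets agrees with the restriction of the derived smash on pointed spaces, which in turn rests on $\operatorname{sSet}_0$ being a semi-monoidal model category. Once this compatibility and the derived adjunction $\mathbf{L}\iota \dashv \mathbf{R}\mathscr{R}$ are in place, the remaining coherence and closedness verifications are formal consequences of restriction of scalars and present no genuine difficulty.
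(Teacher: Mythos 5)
Your proposal is correct and follows essentially the same route as the paper's (implicit) argument: the corollary is stated there as a consequence of Hovey's framing result that $\operatorname{Ho}(\mathcal{M})$ is a closed $\operatorname{Ho}(\operatorname{sSet}^{*/})$-module combined with the coreflective Quillen adjunction $\iota \dashv \mathscr{R}$, i.e.\ restricting the action and cotensor along the derived inclusion and producing the $\operatorname{Ho}(\operatorname{sSet}_0)$-valued enrichment via the derived coreflection $\mathbf{R}\mathscr{R}$, exactly as you do. Your observation that the module coherence only involves associators, so that a semi-monoidal comparison functor suffices, is precisely the point the paper's framework is designed to exploit.
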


We will not further develop the theory here but only note that the standard proofs for monoidal- and module model categories also applies to the non-unital setting. In particular from Section 4.3 in \cite{Hovey1999} we obtain the following statements.
\begin{thm}
    \begin{itemize}
    \item
    Let $(\mathcal{V},\otimes, \underline{\mathcal{V}})$ be a symmetric semi-monoidal model category. Then its homotopy category $\operatorname{Ho}(\mathcal{V})$ has the structure of a semi-monoidal category $(\operatorname{Ho}(\mathcal{V}),\otimes^L,R \underline{\mathcal{V}})$ induced from $\mathcal{V}$.
    \item Let $(\mathcal{C},\overline{\mathcal{C}},\rhd,[-,-])$ be a  $\mathcal{V}$-module model category. Then its homotopy category $\operatorname{Ho}(\mathcal{C})$ has the structure of a closed $\operatorname{Ho}(\mathcal{V})$-module\\ $(\operatorname{Ho}(\mathcal{C}),R\overline{\mathcal{C}},\rhd^L,R[-,-])$ induced from $\mathcal{C}$.
    \item A semi-monoidal Quillen adjunction $(F,U,\varphi,m):\mathcal{V} \to \mathcal{W}$ between symmetric semi-monoidal model categories induces a symmetric semi-monoidal adjunction $(LF,RU,R\varphi,m_{LF}):\operatorname{Ho}(\mathcal{V}) \to \operatorname{Ho}(\mathcal{W})$ of homotopy categories. 
    \item A $\mathcal{V}$-module Quillen adjunction $(F,U,\varphi,m):\mathcal{C} \to \mathcal{D}$ induces a $\operatorname{Ho}(\mathcal{V})$-module adjunction $(LF,RU,R\varphi,m_{LF}):\operatorname{Ho}(\mathcal{C}) \to \operatorname{Ho}(\mathcal{D})$ of homotopy categories.
    \end{itemize}
\end{thm}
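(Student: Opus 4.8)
The proof follows the template of Section 4.3 of \cite{Hovey1999} for monoidal and module model categories, with every appeal to the unit and the unit axioms simply deleted; the engine throughout is Hovey's theory of adjunctions of two variables and Quillen bifunctors, which is itself unit-free. For the first claim, the plan is to observe that since $\otimes$ is a left Quillen bifunctor forming part of the two-variable adjunction witnessing the closed structure $\underline{\mathcal{V}}$, Hovey's derived two-variable adjunction lemma produces the total left derived functor $\otimes^L \colon \operatorname{Ho}(\mathcal{V}) \times \operatorname{Ho}(\mathcal{V}) \to \operatorname{Ho}(\mathcal{V})$ together with the total right derived functors of the internal homs, assembled into an adjunction of two variables. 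This directly exhibits $R\underline{\mathcal{V}}$ as the internal hom on the homotopy level, and no units are used.

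The key step is descending the associativity coherence. First I would record that the tensor of two cofibrant objects is again cofibrant: if $X,Y$ are cofibrant then $\emptyset \to X$ and $\emptyset \to Y$ are cofibrations, and since $\otimes$ is a left adjoint in each variable (using the closed symmetric structure) it preserves the initial object, so their pushout-product reduces to $\emptyset \to X\otimes Y$, which is a cofibration by the Quillen bifunctor axiom. Consequently $\otimes^L$ may be computed on cofibrant replacements as $X\otimes^L Y \simeq QX\otimes QY$, and iterated tensors of cofibrant objects remain cofibrant. The associator $a$ of $\mathcal{V}$, being a natural isomorphism between weak-equivalence-preserving functors on the cofibrant objects, therefore descends to a natural isomorphism $a^L \colon (X\otimes^L Y)\otimes^L Z \to X\otimes^L(Y\otimes^L Z)$ on $\operatorname{Ho}(\mathcal{V})$, and the pentagon for $a^L$ holds because it is the image under localization of the pentagon for $a$ evaluated on cofibrant objects. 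The braiding $c$ descends to $c^L$ in the same manner, and the two symmetry coherence diagrams transport identically; since the pentagon and hexagon are unit-free, nothing is lost by the absence of a unit.

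For the module statement, the identical argument applies with $\rhd$ in place of $\otimes$: it is a left Quillen bifunctor, so $\rhd^L$ exists with right adjoints $R[-,-]$ and $R\overline{\mathcal{C}}$ forming a two-variable adjunction, and the tensoring associator $\alpha$ descends to $\alpha^L$ with its coherence diagram inherited from $\mathcal{C}$ on cofibrant objects. For the two adjunction statements, given a semi-monoidal Quillen adjunction $(F,U,\varphi,m)$ I would note that the structure isomorphism $m\colon FX\otimes' FY \to F(X\otimes Y)$ restricted to cofibrant $X,Y$ consists of weak equivalences between cofibrant objects (using that $F$ is left Quillen and tensors of cofibrants are cofibrant), hence descends to a natural isomorphism $m_{LF}$ exhibiting $(LF,m_{LF})$ as a semi-monoidal functor, its coherence square being the localization of the coherence square for $m$. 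The module case $(F,U,\varphi,m)\colon \mathcal{C}\to\mathcal{D}$ is proved verbatim with $\rhd$ in place of $\otimes$.

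The only point requiring genuine care — and the natural place where the unit might have been needed — is the verification that Hovey's coherence arguments for the associator, the braiding, and the module structure never invoke the unit or the triangle identities. Inspecting those arguments, the pentagon, hexagon, and module-associativity diagrams are manipulated purely through naturality of $a$, $c$, and $\alpha$ together with the preservation of cofibrancy by the relevant tensors; the unit enters Hovey's treatment only through the separate unit-coherence diagrams, which we neither possess nor require. Thus the main obstacle is not a hard step but a careful audit confirming unit-independence, after which each claim is simply the localization of the corresponding unit-free coherence diagram in $\mathcal{V}$ or $\mathcal{C}$.
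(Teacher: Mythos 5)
Your proposal is correct and takes essentially the same approach as the paper: the paper gives no detailed argument at all, stating only that the standard proofs from Section 4.3 of \cite{Hovey1999} for monoidal and module model categories apply verbatim in the non-unital setting, which is precisely the unit-independence audit you carry out. The details you supply (cofibrancy of tensors of cofibrant objects, descent of the associator, braiding, and module coherences evaluated on cofibrant objects, and descent of the structure isomorphism $m$ to $m_{LF}$) are exactly the unit-free portions of Hovey's arguments, so your write-up is a faithful expansion of the paper's citation-level proof.
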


\section{The semi-module structure of $\DGA_0$ over $\cDGA$}\label{secassocmodule}
    From \cite{Anel2013} we know that $\DGA_0$ admits a closed $\operatorname{coDGA}_0$-module structure. To show that $\DGA_0$ also admits a closed $\cDGA$-semi-module structure we will make use of the same procedure. Indeed, applying the conilpotent radical functor $R^{\operatorname{co}}$ to the internal hom functor and the enrichment functor in the $\operatorname{coDGA}_0$ case would provide the results in the conilpotent case. Nevertheless, for the convenience of the reader we provide categorical proofs of the needed results.
    
    \subsection{The semi-monoidal structure of conilpotent coalgebras}
    It is well known from e.g. \cite{Anel2013} that the category of non-counital dg coalgebras $(\operatorname{coDGA}_0, \otimes)$ is symmetric monoidal. We consider the full subcategory of conilpotent non-counital dg coalgebras $\cDGA$. By \Cref{propconilclosedundertensor} we know that the tensor product functor of conilpotent coalgebras restrict to a functor 
    \begin{equation*}
        \otimes: \cDGA\times \cDGA \to \cDGA,
    \end{equation*}
     thus providing a semi-monoidal structure on $\cDGA$. However $\cDGA$ does not quite form a monoidal category under the tensor product as the monoidal unit $k$ of $\operatorname{coDGA}_0$ is not conilpotent.

    \begin{thm}\label{coDGAclosed}
    The category $(\cDGA,\otimes)$ is closed symmetric semi-monoidal.
\end{thm}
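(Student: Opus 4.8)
The plan is to reduce everything to the already-known closed symmetric monoidal structure on $\operatorname{coDGA}_0$ from \cite{Anel2013}, transporting it along the inclusion $\iota : \cDGA \to \operatorname{coDGA}_0$ and its right adjoint, the conilpotent radical $R^{\operatorname{co}}$. The symmetric semi-monoidal structure itself requires almost no work: by \Cref{propconilclosedundertensor} the full subcategory $\cDGA$ is closed under $\otimes$, so $\iota$ is strictly monoidal, meaning $\iota(C \otimes D) = \iota C \otimes \iota D$. The associator $a$ and braiding $c$ of $\operatorname{coDGA}_0$ have components between conilpotent coalgebras that are again morphisms of conilpotent coalgebras, hence restrict to natural isomorphisms on $\cDGA$; their pentagon and hexagon coherences hold in $\cDGA$ because they already hold in $\operatorname{coDGA}_0$ and $\iota$ is fully faithful. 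This yields the symmetric semi-monoidal structure $(\cDGA, \otimes, a, c)$.

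The substantive point is closedness. I would define the internal hom by radicalizing the internal hom of $\operatorname{coDGA}_0$,
\begin{equation*}
\icDGA(C, D) := R^{\operatorname{co}} \, \underline{\operatorname{coDGA}}_0(\iota C, \iota D),
\end{equation*}
which is a functor $\cDGA^{\operatorname{op}} \times \cDGA \to \cDGA$ since it is a composite of functors. To verify that $\icDGA(C, -)$ is right adjoint to $- \otimes C$, I would chain the natural isomorphisms
\begin{align*}
\cDGA(E \otimes C, D) &\cong \operatorname{coDGA}_0(\iota E \otimes \iota C, \iota D) \\
&\cong \operatorname{coDGA}_0(\iota E, \underline{\operatorname{coDGA}}_0(\iota C, \iota D)) \\
&\cong \cDGA(E, \icDGA(C, D)),
\end{align*}
where the first uses fullness of $\iota$ together with its strict monoidality, the second is the internal hom adjunction in $\operatorname{coDGA}_0$, and the third is the adjunction $\iota \dashv R^{\operatorname{co}}$ recalled in the preliminaries. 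All isomorphisms are natural in $E$, which is exactly the defining property of the internal hom.

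I expect the only genuine care to be bookkeeping rather than a real obstacle: checking that $\iota$ is strictly (not merely laxly) monoidal so that the first isomorphism is literally an equality of hom-sets, and confirming naturality of the composite in all variables so that $\icDGA$ is a genuine bifunctor realizing the adjunction uniformly. I would also remark that, as the surrounding discussion indicates, this construction is precisely $R^{\operatorname{co}}$ applied to the internal hom of $\operatorname{coDGA}_0$, so it agrees with the enrichment-transfer philosophy used throughout and requires no separate verification of conilpotence of the output, since the radical $R^{\operatorname{co}}$ forces the result into $\cDGA$ by construction.
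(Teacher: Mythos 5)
Your proof is correct, but it takes a genuinely different route from the paper's own argument. The paper constructs the internal hom from scratch: it first defines $\icDGA(C,T^{\operatorname{co}}V) := T^{\operatorname{co}}\iVec(C,V)$ on conilpotent cofree coalgebras, where the adjunction is immediate from the conilpotent cofree adjunction and the tensor-hom adjunction of $\DGVec$, and then extends to an arbitrary $D$ by presenting $D$ as an equaliser $D \to T^{\operatorname{co}}D \rightrightarrows (T^{\operatorname{co}})^2D$ and defining $\icDGA(C,D)$ as the equaliser of the induced pair, the adjunction following because $\cDGA(C_0,-)$ preserves limits. You instead transport the known closed structure of $\operatorname{coDGA}_0$ from \cite{Anel2013} along the adjunction $\iota \dashv R^{\operatorname{co}}$, setting $\icDGA(C,D) := R^{\operatorname{co}}\,\underline{\operatorname{coDGA}}_0(\iota C,\iota D)$ and chaining the three adjunctions; each ingredient (full faithfulness of $\iota$ and closure of $\cDGA$ under $\otimes$ via \Cref{propconilclosedundertensor}, closedness of $\operatorname{coDGA}_0$, and $\iota\dashv R^{\operatorname{co}}$) is available in the paper's preliminaries, and naturality holds at every step, so there is no gap. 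In fact the paper explicitly sanctions your route: the opening of the section containing \Cref{coDGAclosed} observes that applying $R^{\operatorname{co}}$ to the internal hom in the $\operatorname{coDGA}_0$ case would give the conilpotent result, and the remark following the theorem records exactly your formula $\icDGA(C,D)\cong R^{\operatorname{co}}\underline{\operatorname{coDGA}}_0(C,D)$; the author simply opts for a self-contained proof ``for the convenience of the reader.'' What each approach buys: yours is shorter and conceptually cleaner, but it outsources the substantive content (the existence of the internal hom on $\operatorname{coDGA}_0$, itself built by the analogous cofree-plus-equaliser argument using $T_0$) to \cite{Anel2013}; the paper's direct construction is self-contained, gives an explicit description of the output, and establishes the template that is reused essentially verbatim for $\cCDGA$ in \Cref{coCDGAclosed} and for the enrichment and tensoring functors of $\DGA_0$ in \Cref{propenrichmentfunctor} and \Cref{proptensoringfunctor}.
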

\begin{proof}
    We need to construct an internal hom functor
    \begin{equation*}
        \icDGA(-,-):\left(\cDGA\right)^{\operatorname{op}} \times \cDGA \to \cDGA,
    \end{equation*}
    satisfying that $\icDGA(C_1,-)$ is right adjoint to the tensor product functor $-\otimes C_1$ for every $C_1\in \cDGA$.

    For the case of a conilpotent cofree coalgebra $T^{\operatorname{co}}V$ we have the natural isomorphisms
    \begin{equation*}
    \begin{aligned}
        & \cDGA(C_0\otimes C_1,T^{\operatorname{co}}_0V) \cong \DGVec(C_0\otimes C_1,V) \cong\\ &\DGVec(C_0,\iVec(C_1,V)) \cong  \cDGA(C_0,T^{\operatorname{co}}\iVec(C_1,V)).
        \end{aligned}
    \end{equation*}
    Hence we can define
    \begin{equation*}
        \icDGA(C,T^{\operatorname{co}}V) := T^{\operatorname{co}}\iVec(C,V). 
    \end{equation*}
    In the case of an arbitrary conilpotent coalgebra $D$ we can write $D$ as an equaliser
    \begin{equation*}
        \begin{tikzcd}
            D \arrow[r] & T^{\operatorname{co}}D \arrow[r,shift left=0.5ex] \arrow[r,shift right=0.5ex] & (T^{\operatorname{co}})^2D.
        \end{tikzcd}
    \end{equation*}
    We then define the internal hom $\icDGA(C,D)$ as the equaliser of 
    \begin{equation*}
        \begin{tikzcd}
            \icDGA(C,T^{co}D) \arrow[r,shift left=0.5ex] \arrow[r,shift right=0.5ex] \arrow[d,phantom,"\cong"] & \icDGA(C,(T^{co})^2D) \arrow[d,phantom,"\cong"] \\
            T^{co}\iVec(C,D) \arrow[r,shift left=0.5ex] \arrow[r,shift right=0.5ex] & T^{\operatorname{co}}\iVec(C,T^{co}D)
        \end{tikzcd}
    \end{equation*}
    Since the $\cDGA(C_0,-)$ functor preserves limits this indeed gives the desired natural bijection
    \begin{equation*}
        \cDGA(C_0 \otimes C,D) \cong \cDGA(C_0,\icDGA(C,D)).
    \end{equation*}
\end{proof}

Note that the internal hom of $\cDGA$ is substantially different from the internal hom of $\operatorname{coDGA}_0$. The latter can be constructed analogously using the non-unital cofree functor $T_0$ in place of the conilpotent cofree functor $T^{\operatorname{co}}$. In particular we have that $\icDGA(C,D) \cong R^{\operatorname{co}}\underline{\operatorname{coDGA}}_0(C,D)$ for all conilpotent coalgebras $C$ and $D$.

\begin{remark}
As we have no unit in a semi-monoidal category we have no way of recovering the hom sets from the internal hom. Indeed in our case that information is lost as the only atom of $\icDGA(C,D)$ is $0$, corresponding to the zero morphism. This is also the reason we cannot consider some larger subcategory of $\operatorname{coDGA}_0$ containing $k$ as, even if such a category admits an internal hom, it will never be conilpotent. 
\end{remark}

\subsection{The semi-module structure of non-unital DG-algebras}
    In addition to the internal hom adjunction for conilpotent coalgebras established in the previous section we will need to establish tensoring and cotensoring adjunctions. 
    
    Our starting point is to consider the convolution algebra functor restricted to conilpotent coalgebras
    \begin{equation*}
        \{-,-\}:\left(\cDGA\right)^{\operatorname{op}} \times \DGA_0 \to \DGA_0.
    \end{equation*}
    We will show that this will be the cotensoring functor of the closed semi-module structure of $\DGA_0$. We construct the enriched hom functor as the left adjoint to the opposite convolution algebra functor. 
    \begin{prop}\label{propenrichmentfunctor}
        There exists a functor
        \begin{equation*}
            \eDGA(-,-):\DGA_0^{\operatorname{op}} \times \DGA_0 \to \cDGA,
        \end{equation*}
        such that $\eDGA(-,B)$ is right adjoint to the opposite convolution algebra functor $\{-,B\}^{\operatorname{op}}$ for each algebra $B\in \DGA_0$. We will refer to $\eDGA$ as the enrichment functor.
    \end{prop}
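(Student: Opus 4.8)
The plan is to construct $\eDGA(A,B)$ as a (conilpotent) universal measuring coalgebra, in the same spirit as the internal hom of \Cref{coDGAclosed}. Unwinding the asserted adjointness of $\eDGA(-,B)$ to $\{-,B\}^{\operatorname{op}}$, what must be produced is a conilpotent coalgebra $\eDGA(A,B)$ together with a bijection
\begin{equation*}
    \cDGA(C,\eDGA(A,B)) \cong \DGA_0(A,\{C,B\}),
\end{equation*}
natural in $C$ (and in $A,B$). First I would make the right-hand side concrete. Combining the tensor–hom adjunction of $\DGVec$ with the cofree adjunction $U \dashv T^{\operatorname{co}}$, a linear map $f\colon A \to \{C,B\}$ is the same datum as a map $\tilde f\colon C\otimes A \to B$, equivalently a coalgebra morphism $C \to T^{\operatorname{co}}\iVec(A,B)$; and $f$ is an algebra morphism precisely when $\tilde f$ satisfies the measuring condition $\tilde f(c\otimes aa') = \sum \tilde f(c^{(1)}\otimes a)\,\tilde f(c^{(2)}\otimes a')$ coming from multiplicativity, together with compatibility with the differentials.

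Next I would carve out these conditions as an equaliser of cofree coalgebras. Both the measuring equation and the differential constraint are equalities of two linear maps out of $C$ with values in complexes built from $\iVec(A\otimes A,B)$ and from the differential of $\iVec(A,B)$; transporting along $U \dashv T^{\operatorname{co}}$ and using that a coalgebra morphism out of $C$ intertwines $\Delta_C$ with the cut comultiplication, these become a parallel pair of coalgebra morphisms out of $T^{\operatorname{co}}\iVec(A,B)$. I would then define $\eDGA(A,B)$ to be the joint equaliser of this pair in $\cDGA$, a subcoalgebra of $T^{\operatorname{co}}\iVec(A,B)$. Since $\cDGA(C,-)$ preserves equalisers, the bijection above follows immediately, and naturality in $A,B$ together with the right-adjointness of $\eDGA(-,B)$ to $\{-,B\}^{\operatorname{op}}$ is then forced by uniqueness of adjoints assembled pointwise into a bifunctor.

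The step I expect to be the main obstacle is exactly this linearisation: the measuring condition is quadratic in $\tilde f$ (it involves $\Delta_C$), so one must check that it can be rewritten as the equality of two honest coalgebra morphisms out of $T^{\operatorname{co}}\iVec(A,B)$, so that the equaliser formed in $\cDGA$ genuinely computes the subfunctor of algebra maps rather than a larger subfunctor of mere chain maps. One also has to verify that the equations cut out a subcoalgebra and not merely a subcomplex, for which the cut comultiplication of the cofree coalgebra and the conilpotence hypothesis are used.

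Finally, I would record two cleaner but less explicit alternatives as a sanity check. Abstractly, $\cDGA$ is locally presentable and $\{-,B\}^{\operatorname{op}}\colon \cDGA \to \DGA_0^{\operatorname{op}}$ preserves all colimits — colimits in $\cDGA$ are computed on underlying complexes, limits in $\DGA_0$ are computed likewise, $\iVec(-,B)$ sends colimits to limits, and the convolution product is natural in the coalgebra variable — so the adjoint functor theorem produces the right adjoint directly. Alternatively, setting $\eDGA(A,B) := R^{\operatorname{co}}E(A,B)$, where $E(-,-)\colon \DGA_0^{\operatorname{op}}\times\DGA_0 \to \operatorname{coDGA}_0$ is the enrichment functor of the closed $\operatorname{coDGA}_0$-module structure of \cite{Anel2013}, the adjunction $\iota \dashv R^{\operatorname{co}}$ together with $\{C,B\} = \{\iota C,B\}$ yields $\cDGA(C,R^{\operatorname{co}}E(A,B)) \cong \operatorname{coDGA}_0(\iota C,E(A,B)) \cong \DGA_0(A,\{C,B\})$ with no further work.
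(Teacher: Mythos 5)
Your proposal is correct, but it takes a genuinely different route from the paper. The paper never touches measurings in this proof: it presents $A$ by the canonical coequaliser of free algebras $T_0^2A \rightrightarrows T_0A \to A$, defines the enrichment on free algebras by $\eDGA(T_0V,B) := T^{\operatorname{co}}\iVec(V,B)$ (where the adjunction is a chain of elementary adjunctions, exactly as in your free-case computation), and then defines $\eDGA(A,B)$ as the equaliser of the induced parallel pair $T^{\operatorname{co}}\iVec(A,B)\rightrightarrows T^{\operatorname{co}}\iVec(T_0A,B)$; the adjunction for general $A$ follows formally since $\DGA_0(-,\{C,B\})$ turns the coequaliser into an equaliser and $\cDGA(C,-)$ preserves limits. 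This sidesteps entirely the step you flag as the main obstacle: the multiplicativity constraint is silently encoded in one of the two maps of the free presentation, so no linearisation of a quadratic condition ever needs to be checked by hand. That said, your linearisation does go through: for a coalgebra map $g:C\to T^{\operatorname{co}}\iVec(A,B)$ with corestriction $\bar g$, intertwining with the cut comultiplication gives $(\bar g\otimes\bar g)\circ\Delta_C = \pi_2\circ g$, where $\pi_2$ is the projection onto the weight-two component, so both sides of the measuring equation are of the form $(\text{fixed linear map})\circ g$ with values in $\iVec(A\otimes A,B)$; transporting along $U\dashv T^{\operatorname{co}}$ converts this into an equaliser condition on a parallel pair of coalgebra morphisms, and the equaliser in $\cDGA$ represents exactly the measurings because images of coalgebra maps are subcoalgebras. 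So your construction produces the same object at the cost of this verification, which the paper's free-resolution argument buys for free. Finally, your third alternative, $\eDGA(A,B):=R^{\operatorname{co}}E(A,B)$ with $E$ the enrichment functor of \cite{Anel2013}, is precisely the shortcut the paper itself advertises at the start of its Section 4 (apply the conilpotent radical functor to the non-conilpotent enrichment), and it is the quickest complete proof, modulo citing the $\operatorname{coDGA}_0$ result.
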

    \begin{proof}
        For a free algebra $T_0V$ we have natural isomorphisms
        \begin{equation*}
            \begin{aligned}
            &\DGA_0(T_0V,\{C,B\}) \cong \DGVec(V,\iVec(C,B)) \cong \DGVec(V\otimes C,B) \cong\\
            &\DGVec(C,\iVec(V,B)) \cong \cDGA(C,T_0^{\operatorname{co}}\iVec(V,B)),
            \end{aligned}
        \end{equation*}
        so we can define the enriched hom as
        \begin{equation*}
            \eDGA(T_0V,B) := T_0^{\operatorname{co}}\iVec(V,B).
        \end{equation*}
        Given an arbitrary algebra $A$ we can write it as a coequaliser 
        \begin{equation*}
        \begin{tikzcd}
            T^2_0A \arrow[r,shift left=0.5ex] \arrow[r,shift right=0.5ex] & T_0A \arrow[r] & A,
        \end{tikzcd}
    \end{equation*}
        and define $\eDGA(A,B)$ as the equaliser of
        \begin{equation*}
        \begin{tikzcd}
            \eDGA(T_0A,B) \arrow[r,shift left=0.5ex] \arrow[r,shift right=0.5ex] \arrow[d,phantom,"\cong"] & \eDGA(T_0^2A,B) \arrow[d,phantom,"\cong"]\\
           T^{\operatorname{co}}\iVec(A,B) \arrow[r,shift left=0.5ex] \arrow[r,shift right=0.5ex] & T^{\operatorname{co}}\iVec(T_0A,B).
        \end{tikzcd}
    \end{equation*}
    Since $\cDGA(C,-)$ preserves limits we get the desired natural bijection
    \begin{equation*} 
        \DGA_0(A,\{C,B\}) \cong  \cDGA(C,\eDGA(A,B)).
    \end{equation*}
    \end{proof}
    
    We construct what will be the tensoring functor similarly.
    \begin{prop}\label{proptensoringfunctor}
        There exists a functor
        \begin{equation*}
            (-) \rhd (-):\cDGA \times \DGA_0 \to \DGA_0,
        \end{equation*}
        such that $C \rhd (-)$ is left adjoint to the convolution algebra functor $\{C,-\}$ for each coalgebra $C\in \cDGA$. We will refer to $\rhd$ as the tensoring functor.
    \end{prop}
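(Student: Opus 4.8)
The plan is to dualize the construction of the enrichment functor in \Cref{propenrichmentfunctor}. Since $C \rhd -$ is required to be a \emph{left} adjoint, it must preserve colimits; so I would first pin down its value on free algebras $T_0V$, where everything is forced by adjunction, and then extend to an arbitrary algebra by writing the latter as a coequaliser of free algebras and declaring $C \rhd -$ to preserve that coequaliser.

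For the free case, fix $C \in \cDGA$ and chain the relevant adjunctions exactly as in the proof of \Cref{propenrichmentfunctor}:
\begin{equation*}
    \begin{aligned}
        &\DGA_0(T_0V, \{C,B\}) \cong \DGVec(V, \iVec(C,B)) \cong \DGVec(V\otimes C, B) \cong\\
        &\DGA_0(T_0(V\otimes C), B),
    \end{aligned}
\end{equation*}
using the free--forgetful adjunction $T_0 \dashv U$ (noting that $U\{C,B\} = \iVec(C,B)$), the tensor--hom adjunction in $\DGVec$, and the free--forgetful adjunction once more. This forces the definition
\begin{equation*}
    C \rhd T_0V := T_0(V\otimes C),
\end{equation*}
which is evidently functorial in both $V$ and $C$. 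The order $V\otimes C$ rather than $C\otimes V$ is determined by tracking the convolution product through the middle isomorphism; by symmetry of $\cDGA$ the choice is immaterial up to coherent isomorphism.

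For an arbitrary $A \in \DGA_0$ I would reuse the canonical presentation as a coequaliser of free algebras from \Cref{propenrichmentfunctor},
\begin{equation*}
    \begin{tikzcd}
        T^2_0A \arrow[r,shift left=0.5ex] \arrow[r,shift right=0.5ex] & T_0A \arrow[r] & A,
    \end{tikzcd}
\end{equation*}
and, since the sought left adjoint must preserve this colimit, define $C\rhd A$ as the coequaliser
\begin{equation*}
    \begin{tikzcd}
        C\rhd T^2_0A \arrow[r,shift left=0.5ex] \arrow[r,shift right=0.5ex] & C\rhd T_0A \arrow[r] & C\rhd A,
    \end{tikzcd}
\end{equation*}
both outer terms being free and hence computed by the formula above. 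The universal property is then verified by the chain
\begin{equation*}
    \begin{aligned}
        \DGA_0(C\rhd A, B) &\cong \operatorname{eq}\bigl(\DGA_0(C\rhd T_0A, B) \rightrightarrows \DGA_0(C\rhd T^2_0A, B)\bigr)\\
        &\cong \operatorname{eq}\bigl(\DGA_0(T_0A, \{C,B\}) \rightrightarrows \DGA_0(T^2_0A, \{C,B\})\bigr)\\
        &\cong \DGA_0(A, \{C,B\}),
    \end{aligned}
\end{equation*}
where the first isomorphism holds because $\DGA_0(-,B)$ sends coequalisers to equalisers, the second is the free-algebra case applied termwise, and the third expresses $A$ as the above coequaliser.

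The main obstacle is the naturality bookkeeping in the second isomorphism: one must check that the isomorphism $C\rhd T_0V \cong T_0(V\otimes C)$ is natural enough in $V$ that, applied to the two face maps of the presentation of $A$, it carries the parallel pair defining $C\rhd A$ to precisely the parallel pair whose equaliser computes $\DGA_0(A,\{C,B\})$. Once this is in place the displayed isomorphisms are formal and assemble, naturally in $A$ and $B$, into the desired adjunction $C\rhd - \dashv \{C,-\}$; functoriality of every step in $C$ upgrades this to a bifunctor $\rhd : \cDGA \times \DGA_0 \to \DGA_0$. As a sanity check one could alternatively invoke the adjoint functor theorem, since limits in $\DGA_0$ are created by the forgetful functor to $\DGVec$ and $\iVec(C,-)$ preserves limits, so $\{C,-\}$ preserves all limits and a left adjoint exists on the locally presentable category $\DGA_0$; the explicit coequaliser construction is preferable here as it also exhibits the dependence on $C$.
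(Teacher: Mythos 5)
Your proposal is correct and follows essentially the same route as the paper's proof: define $C\rhd T_0V$ on free algebras via the chain of free--forgetful and tensor--hom adjunctions, then extend to arbitrary $A$ by the canonical free presentation and verify the adjunction using that $\DGA_0(-,B)$ sends coequalisers to equalisers. The only differences are cosmetic (the paper writes $T_0(C\otimes V)$ rather than $T_0(V\otimes C)$, immaterial by symmetry of $\otimes$ in $\DGVec$), and your explicit flagging of the naturality bookkeeping is a point the paper itself leaves implicit.
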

    \begin{proof}
    For a free algebra $T_0V$ we have the natural isomorphisms
    \begin{equation*}
        \begin{aligned}
            &\DGA_0(T_0V,\{C,B\}) \cong  \DGVec(V,\iVec(C,B)) \cong\\
            &\DGVec(C\otimes V,B) \cong \DGA_0(T_0(C\otimes V),B).
        \end{aligned}
    \end{equation*}
         Hence we can define
        \begin{equation*}
            C \rhd T_0V := T_0(C \otimes V).
        \end{equation*}
        Given an arbitrary algebra $A$ we can write it as a coequaliser and define $C\rhd A$ as the the coequaliser of
        \begin{equation*}
        \begin{tikzcd}
            C\rhd T_0^2 A \arrow[r,shift left=0.5ex] \arrow[r,shift right=0.5ex] \arrow[d,phantom,"\cong"]& C \rhd T_0A \arrow[d,phantom,"\cong"]\\
            T_0(C\otimes T_0A) \arrow[r,shift left=0.5ex] \arrow[r,shift right=0.5ex] & T_0(C\otimes A).
        \end{tikzcd}
    \end{equation*}
    Since $\DGA_0(-,B)$ takes colimits to limits we get the desired natural bijection
        \begin{equation*}
            \DGA_0(A,\{C,B\})\cong \DGA_0(C\rhd A,B).
        \end{equation*}
    \end{proof}

    By combining \Cref{propenrichmentfunctor} and \Cref{proptensoringfunctor} we also get a third adjunction between the tensoring and the enrichement functor.
    \begin{cor}\label{cortensenrich}
        The tensoring $(-)\rhd A$ is left adjoint to the enrichment functor $\eDGA(A,-)$ for each algebra $A\in \DGA_0$.
    \end{cor}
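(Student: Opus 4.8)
The plan is to obtain this adjunction formally, by splicing together the two adjunctions of \Cref{propenrichmentfunctor} and \Cref{proptensoringfunctor} along their common functor $\{-,-\}$. Fixing an algebra $A\in\DGA_0$, I would simply compose the natural isomorphisms
\begin{equation*}
    \DGA_0(C\rhd A, B) \cong \DGA_0(A,\{C,B\}) \cong \cDGA(C,\eDGA(A,B)),
\end{equation*}
where the first isomorphism is the tensoring--cotensoring adjunction of \Cref{proptensoringfunctor} and the second is the enrichment--cotensoring adjunction of \Cref{propenrichmentfunctor}. The outer two terms give exactly the bijection $\DGA_0(C\rhd A,B)\cong\cDGA(C,\eDGA(A,B))$ exhibiting $(-)\rhd A$ as left adjoint to $\eDGA(A,-)$.

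The only substantive point is naturality bookkeeping. \Cref{proptensoringfunctor} gives the first isomorphism naturally in $A$ and $B$ for fixed $C$, whereas \Cref{propenrichmentfunctor} gives the second naturally in $A$ and $C$ for fixed $B$; the corollary instead needs naturality in $C$ and $B$ for fixed $A$. I would therefore first note that $\rhd$, $\{-,-\}$ and $\eDGA$ are genuine bifunctors --- each arises from the (co)equaliser presentations used in those two proofs, which are functorial in every argument --- so that both displayed families of adjunction isomorphisms are in fact natural in all of $C$, $A$, $B$ simultaneously. That the pointwise adjunctions extend to naturality in the remaining parameter is the standard parameter theorem for adjunctions, and it is the one step worth checking carefully.

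The cleanest packaging is to regard the three functors as a single two-variable adjunction (a tensor--hom--cotensor situation), with
\begin{equation*}
    \DGA_0(C\rhd A, B)\cong\DGA_0(A,\{C,B\})\cong\cDGA(C,\eDGA(A,B))
\end{equation*}
natural in $C\in\cDGA$ and $A,B\in\DGA_0$. Granting this, the corollary follows by fixing $A$ and reading off the outer isomorphism. I do not expect any computation to be required: the main obstacle is purely the coherent assembly of the two preceding pointwise adjunctions, which reduces to the functoriality of the defining (co)equalisers together with the parameter theorem.
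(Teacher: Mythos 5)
Your proposal is correct and is exactly the paper's (implicit) argument: the corollary is obtained by splicing the adjunction $\DGA_0(C\rhd A,B)\cong\DGA_0(A,\{C,B\})$ of \Cref{proptensoringfunctor} with the adjunction $\DGA_0(A,\{C,B\})\cong\cDGA(C,\eDGA(A,B))$ of \Cref{propenrichmentfunctor} along the common convolution functor $\{-,-\}$. Your additional care about naturality in all three variables (the parameter theorem for adjunctions) is precisely the bookkeeping the paper leaves implicit, so there is no substantive difference.
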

    
    In summary we have adjunctions,
    \begin{equation*}
        \begin{tikzcd}
            \cDGA \arrow[r,shift left=1.1ex,"{\{-,B\}^{\operatorname{op}}}"] \arrow[r,phantom,"\perp"] & \DGA^{\operatorname{op}} \arrow[l,shift left=1.1ex,"{\eDGA(-,B)}"],
        \end{tikzcd}
    \end{equation*}
    
    \begin{equation*}
        \begin{tikzcd}
            \DGA_0 \arrow[r,shift left=1.1ex,"C\rhd (-)"] \arrow[r,phantom,"\perp"] & \DGA_0 \arrow[l,shift left=1.1ex,"{\{C,-\}}"],
        \end{tikzcd}
    \end{equation*}
    and
\begin{equation*}
        \begin{tikzcd}
            \cDGA \arrow[r,shift left=1.1ex,"(-)\rhd A"] \arrow[r,phantom,"\perp"] & \DGA_0 \arrow[l,shift left=1.1ex,"{\eDGA(A,-)}"].
        \end{tikzcd}
    \end{equation*}
    These are analogous to those shown in \cite{Anel2013} for the non-conilpotent case. In particular the tensoring and cotensoring functors are the same as in the non-conilpotent case while the enrichment functor differs. 

    \subsubsection{Measurings and coherence}
    To give $\DGA_0$ the structure of a module category we also need it to satisfy the coherence axiom. To show this we will use the concept of measurings developed in \cite{Sweedler1969}. We will briefly repeat the definition of measurings and some properties we will make use of, while referring the reader to \cite{Anel2013} for a more extensive coverage. 
    
    \begin{definition}
        Let $A,B$ be non-unital dg-algebras and $C$ a non-counital dg-coalgebra. We say that a dg-linear morphism
        $f:C\otimes A \to B$ is a measuring if the adjoint morphism,
        \begin{equation*}
            A \to \{C,B\},
        \end{equation*}
        is a morphism of non-unital dg-algebras. 
    \end{definition}
    As an immediate consequence of the definition we have the following.
    \begin{prop}\label{propmeasuringfunctor}
        Let $v:C\otimes A \to B$ be a measuring, $f:A'\to A$ and $g:B\to B'$ be algebra maps and $h:C'\to C$ a coalgebra map. Then the composition
        \begin{equation*}
            g\circ v \circ (h\otimes f):C' \otimes A' \to B'
        \end{equation*}
        is also a measuring.
    \end{prop}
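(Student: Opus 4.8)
The plan is to reformulate the measuring condition through the tensor--hom adjunction of $\DGVec$ and then exhibit the adjoint of the composite as a composite of dg-algebra maps. Recall that $v\colon C\otimes A\to B$ is a measuring precisely when its adjoint $\hat v\colon A\to \{C,B\}$ under the isomorphism $\DGVec(C\otimes A,B)\cong\DGVec(A,\iVec(C,B))$ is a morphism of non-unital dg-algebras, where $\iVec(C,B)$ carries the convolution product. So the goal is to show that the adjoint $\widehat{g\circ v\circ(h\otimes f)}\colon A'\to\{C',B'\}$ is an algebra map.

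The key observation is that the convolution algebra is functorial: a coalgebra map $h\colon C'\to C$ together with an algebra map $g\colon B\to B'$ induce a map $\{h,g\}\colon\{C,B\}\to\{C',B'\}$, $\phi\mapsto g\circ\phi\circ h$, and this is a morphism of dg-algebras. First I would verify this directly: unwinding $\phi*\psi=\mu_B\circ(\phi\otimes\psi)\circ\Delta_C$ and using that $h$ is a coalgebra map, $\Delta_C\circ h=(h\otimes h)\circ\Delta_{C'}$, together with that $g$ is an algebra map, $g\circ\mu_B=\mu_{B'}\circ(g\otimes g)$, one obtains $\{h,g\}(\phi*\psi)=\{h,g\}(\phi)*\{h,g\}(\psi)$; compatibility with the convolution differential follows from the same two identities.

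With this in hand, I would check by chasing an element $a'\in A'$ that the adjoint of the composite factors as
\[
\widehat{g\circ v\circ(h\otimes f)}=\{h,g\}\circ\hat v\circ f\colon A'\xrightarrow{f}A\xrightarrow{\hat v}\{C,B\}\xrightarrow{\{h,g\}}\{C',B'\},
\]
since both sides send $a'$ to the map $c'\mapsto g\bigl(v(h(c')\otimes f(a'))\bigr)$. Now $f$ is an algebra map by hypothesis, $\hat v$ is an algebra map because $v$ is a measuring, and $\{h,g\}$ is an algebra map by the previous paragraph, so their composite $\widehat{g\circ v\circ(h\otimes f)}$ is an algebra map; hence $g\circ v\circ(h\otimes f)$ is a measuring.

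The only genuine content is the functoriality of the convolution algebra in its two variables, i.e. that $\{h,g\}$ preserves the convolution product and the convolution differential. This is the step I would expect to require an actual computation, although it is a short argument in Sweedler notation; everything else is formal adjunction bookkeeping, which is presumably why the authors label the statement an immediate consequence of the definition.
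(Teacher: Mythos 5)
Your proof is correct and matches the paper's intent: the paper offers no written proof at all, stating the proposition as ``an immediate consequence of the definition,'' and what it has in mind is exactly your argument --- pass to adjoints, use that $\phi\mapsto g\circ\phi\circ h$ is a map of convolution dg-algebras, and compose algebra maps. Your write-up simply makes explicit the functoriality computation that the paper leaves implicit.
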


    \begin{prop}\label{propextmeasuring}
        Let $C$ be a non-counital dg-coalgebra, $A$ a non-unital dg-algebra and $V$ a dg-vector space. Then a dg-linear map
        \begin{equation*}
            f:C\otimes V \to A,
        \end{equation*}
        extends uniquely to a measuring
        \begin{equation*}
            f:C\otimes T_0V \to A.
        \end{equation*}
    \end{prop}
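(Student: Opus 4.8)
The plan is to reduce the claim to a chain of adjunctions, exactly in the spirit of the computations appearing in the proofs of \Cref{propenrichmentfunctor} and \Cref{proptensoringfunctor}. By the very definition of a measuring, giving a measuring $C \otimes T_0 V \to A$ is the same as giving a morphism of non-unital dg-algebras $T_0 V \to \{C,A\}$. Hence the proposition reduces to the statement that such algebra morphisms are in natural bijection with dg-linear maps $C \otimes V \to A$, and it is this bijection I would establish.

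First I would use the free--forgetful adjunction $T_0 \dashv U$ between $\DGA_0$ and $\DGVec$, together with the fact that the underlying dg-vector space of the convolution algebra $\{C,A\}$ is $\iVec(C,A)$, to obtain
\begin{equation*}
    \DGA_0(T_0 V, \{C,A\}) \cong \DGVec(V, \iVec(C,A)).
\end{equation*}
Next, applying the tensor--hom adjunction in $\DGVec$ (and the symmetry of $\otimes$ to fix the order of the factors) gives
\begin{equation*}
    \DGVec(V, \iVec(C,A)) \cong \DGVec(C \otimes V, A).
\end{equation*}
Composing the two isomorphisms produces the desired natural bijection between measurings $C \otimes T_0 V \to A$ and dg-linear maps $C \otimes V \to A$, and uniqueness is immediate from bijectivity. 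To confirm that the associated measuring $\tilde f$ genuinely \emph{extends} the given $f$, I would trace the unit $\eta_V : V \to U T_0 V$ of the free--forgetful adjunction, which is the inclusion of generators in tensor-degree one: the universal property yields $\tilde\phi \circ \eta_V = \phi$ for the algebra map $\tilde\phi$ corresponding to $\phi : V \to \{C,A\}$, and transporting this identity back across the tensor--hom adjunction shows $\tilde f \circ (\Id_C \otimes \eta_V) = f$, i.e. $\tilde f$ restricts to $f$ along $C \otimes V \hookrightarrow C \otimes T_0 V$. Concretely, $\tilde f$ is the expected measuring formula, sending $c \otimes (v_1 \cdots v_n)$ to $\sum \pm\, f(c_{(1)} \otimes v_1) \cdots f(c_{(n)} \otimes v_n)$ for the iterated coproduct of $c$.

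The one point demanding genuine care --- and what I expect to be the main obstacle --- is checking that each arrow above is compatible with the differentials, so that these are bijections of \emph{dg}-morphisms and not merely of the underlying graded maps. This amounts to knowing that $T_0 \dashv U$ is a dg-adjunction, so that a chain map out of $V$ induces a chain map out of $T_0 V$ (the differential on $T_0 V$ being determined on generators by $d_V$ via the Leibniz rule), and that this is matched with the convolution differential $df = d_A f - (-1)^{|f|} f d_C$ on $\{C,A\}$. Once the dg-enrichment of the two adjunctions is in place, the remainder of the argument is purely formal.
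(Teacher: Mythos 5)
Your proposal is correct and follows essentially the same route as the paper, whose entire proof is the chain of natural isomorphisms $\DGVec(C\otimes V,A) \cong \DGVec(V,\iVec(C,A)) \cong \DGA_0(T_0V,\{C,A\}) \cong \mathcal{M}(C,T_0V,A)$ obtained from the tensor--hom adjunction, the free--forgetful adjunction, and the definition of measuring --- exactly your three steps, composed in the opposite order. Your additional verifications (tracing the unit to confirm that the measuring genuinely extends $f$, and the dg-compatibility of the adjunctions) are points the paper leaves implicit.
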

    \begin{proof}
        By the tensor-hom adjunction for dg-vector spaces, the free forgetful adjunction for dg-algebras, and the definition of measurings we have natural equivalences
        \begin{equation*}
            \begin{aligned}
                &\DGVec(C\otimes V,A) \cong \DGVec(V,\iVec(C,A)) \cong\\
                &\DGA_0(T_0V,\{C,V\}) \cong \mathcal{M}(C,T_0V,A).
            \end{aligned}
        \end{equation*}
    \end{proof}

    We denote by $\mathcal{M}(C,A,B)$ the set of measurings from $C\otimes A \to B$. By \Cref{propmeasuringfunctor} this assignment extends to a functor
    \begin{equation*}
        \mathcal{M}(-,-;-):\left(\operatorname{coDGA}_0\right)^{\operatorname{op}}\times \DGA_0^{\operatorname{op}} \times \DGA_0 \to \text{Set},
    \end{equation*}
    which we will refer to as the measurement functor. We will consider the measurement functor restricted to conilpotent coalgebras,
        \begin{equation*}
            \mathcal{M}(-,-;-):\left(\cDGA\right)^{\operatorname{op}}\times \DGA_0^{\operatorname{op}} \times \DGA_0 \to \text{Set},
    \end{equation*}
    which we will refer to as the restricted measurement functor.
    The measurement functor is representable in each variable, which is shown in Section 4.1 of \cite{Anel2013}. The argument to show that it is also representable in the restricted case is similar which we briefly repeat here.
    \begin{prop}\label{propmeasuringrep}
        The restricted measurement functor $\mathcal{M}$ is represented in each variable.
    \end{prop}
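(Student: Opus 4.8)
The plan is to reduce the whole statement to a single natural bijection and then read off the three representations from the adjunctions already established. By the very definition of a measuring, a dg-linear map $f : C \otimes A \to B$ is a measuring precisely when its adjoint under the tensor-hom isomorphism $\DGVec(C \otimes A, B) \cong \DGVec(A, \iVec(C, B))$ is a morphism of non-unital dg-algebras $A \to \{C, B\}$. Since $\{C, B\}$ has underlying dg-vector space $\iVec(C, B)$, restricting the tensor-hom bijection to those adjoints that are algebra maps yields
\begin{equation*}
\mathcal{M}(C, A, B) \cong \DGA_0(A, \{C, B\}),
\end{equation*}
which is the crux of the argument. First I would verify that this bijection is natural in all three variables, i.e. compatible with the functoriality of $\mathcal{M}$ supplied by \Cref{propmeasuringfunctor}: precomposition with a coalgebra map $h : C' \to C$ corresponds to $\{h, B\}$, precomposition with an algebra map $A' \to A$ to the evident restriction, and postcomposition with $g : B \to B'$ to $\{C, g\}$. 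This is a routine diagram chase unwinding the adjoints.

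Granting the bijection, representability in each variable is immediate. For the variable $B$ I would compose with the tensoring adjunction of \Cref{proptensoringfunctor}, giving $\DGA_0(A, \{C, B\}) \cong \DGA_0(C \rhd A, B)$ and exhibiting $C \rhd A \in \DGA_0$ as the representing object of the covariant functor $\mathcal{M}(C, A, -) : \DGA_0 \to \text{Set}$. For the variable $A$ the bijection already displays $\{C, B\} \in \DGA_0$ as the representing object of the contravariant functor $\mathcal{M}(C, -, B)$. For the variable $C$ I would compose with the enrichment adjunction of \Cref{propenrichmentfunctor}, giving $\DGA_0(A, \{C, B\}) \cong \cDGA(C, \eDGA(A, B))$ and exhibiting $\eDGA(A, B) \in \cDGA$ as the representing object of $\mathcal{M}(-, A, B) : (\cDGA)^{\operatorname{op}} \to \text{Set}$.

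The only genuine work lies in this first step — confirming that the defining bijection $\mathcal{M}(C, A, B) \cong \DGA_0(A, \{C, B\})$ is natural in all three arguments at once — since thereafter the three representations are just the three adjunctions read off in turn. I expect the main obstacle, such as it is, to be bookkeeping the variances correctly: $C$ and $A$ enter contravariantly while $B$ enters covariantly, so the representing objects live in $\cDGA$, $\DGA_0$, and $\DGA_0$ respectively, and one must check that the naturality squares of $\mathcal{M}$ match those of the convolution, tensoring, and enrichment functors. No new construction is required beyond those of \Cref{proptensoringfunctor} and \Cref{propenrichmentfunctor}.
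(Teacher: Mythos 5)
Your proposal is correct and follows essentially the same route as the paper: the paper likewise observes that, by the very definition of measuring, $\mathcal{M}(C,A,B) \cong \DGA_0(A,\{C,B\})$ (with universal element the counit of the tensor-hom adjunction), and then reads off the representations in the remaining two variables from the chain $\DGA_0(A,\{C,B\}) \cong \DGA_0(C\rhd A,B) \cong \cDGA(C,\eDGA(A,B))$ supplied by \Cref{proptensoringfunctor} and \Cref{propenrichmentfunctor}. Your extra emphasis on verifying naturality of the defining bijection is a detail the paper passes over silently, not a different argument.
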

    \begin{proof}
        By definition of measuring the restricted measurement functor is represented in the second variable by
        \begin{equation*}
        \left(\{C,B\},\epsilon:C\otimes \{C,B\} \to B\right),
    \end{equation*}
    where $\epsilon$ is the counit of the tensor-hom adjunction for dg-vector spaces.
    The representability in the remaining variables now follows from the tensored and cotensored adjunctions constructed in \Cref{propenrichmentfunctor} and \Cref{proptensoringfunctor}. That is we have isomorphisms,
        \begin{equation*}
            \begin{aligned}
                &\mathcal{M}(C,A,B) \cong\\
                &\DGA_0(A,\{C,B\}) \cong \DGA_0(C\rhd A,B) \cong \cDGA(C,\eDGA(A,B)),
            \end{aligned}
    \end{equation*}
    natural in each variable. 
    \end{proof}

    \begin{remark}
        Explicitly we have that the restricted measuring functor is represented in the third variable by
    \begin{equation*}
        \left(C\rhd A,u:C\otimes A \to C\rhd A\right),
    \end{equation*}
    where $u$ is the measuring induced from the inclusion map
    \begin{equation*}
        i:C\otimes V \to T_0(C \otimes V),
    \end{equation*}
     by \Cref{propextmeasuring}. 
     
    Similarly, the restricted measuring functor is represented in the first variable by
    \begin{equation*}
        (\eDGA(A,B),\mathbf{ev}:\eDGA(A,B) \otimes A \to B).
    \end{equation*}
    Here the evaluation map $\mathbf{ev}$ is the composition
    \begin{equation*}
                \eDGA(A,B) \otimes A \xrightarrow{u} \eDGA(A,B) \rhd A \xrightarrow{\epsilon_\rhd} B,
    \end{equation*}
    with $\epsilon_\rhd$ denoting the counit of the tensoring-enrichment functor adjunction of \Cref{cortensenrich}. 
    \end{remark}
    
    We can now proceed with showing the semi-module structure of $(\DGA_0,\rhd)$. 
    \begin{lemma}\label{lemmaassoc}
        There exists a unique natural isomorphism
        \begin{equation*}
            a:(C\otimes D) \rhd A \to C\rhd (D\rhd A),
        \end{equation*}
        such that
        \begin{equation*}
                \begin{tikzcd}
                (C\otimes D) \otimes A \arrow[r,"\alpha"] \arrow[d,"u"'] & C\otimes (D\otimes A) \arrow[d,"u\circ (1\otimes u)"]\\
                (C\otimes D) \rhd A \arrow[r,dashed,"a"] & C\rhd (D\rhd A)
                \end{tikzcd}
            \end{equation*}
            commutes.
    \end{lemma}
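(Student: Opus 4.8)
The plan is to deduce the statement from the Yoneda lemma by exhibiting both $(C\otimes D)\rhd A$ and $C\rhd(D\rhd A)$ as representing objects for the single functor $B\mapsto\mathcal{M}(C\otimes D,A,B)$ on $\DGA_0$, and then reading off $a$ as the induced comparison of representing objects. By \Cref{propmeasuringrep} and the remark following it, $(C\otimes D)\rhd A$ already represents this functor, the universal measuring being the canonical map $u\colon(C\otimes D)\otimes A\to(C\otimes D)\rhd A$; note that $C\otimes D$ is again conilpotent by \Cref{propconilclosedundertensor}, so this does lie in the domain of the restricted measurement functor. Since precomposition with $u$ is a natural bijection $\DGA_0((C\otimes D)\rhd A,B)\cong\mathcal{M}(C\otimes D,A,B)$, any algebra morphism out of $(C\otimes D)\rhd A$ is determined by its composite with $u$; this will give both the existence and the uniqueness of $a$, provided the composite along the top and right edges of the square is shown to be a measuring.

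The computational core of the argument, and the step I expect to be the main obstacle, is a \emph{natural isomorphism of dg-algebras}
\begin{equation*}
    \{C\otimes D,B\}\cong\{D,\{C,B\}\}.
\end{equation*}
On underlying dg-vector spaces this is the tensor-hom identification $\iVec(C\otimes D,B)\cong\iVec(D,\iVec(C,B))$, obtained from the associator of $\otimes$ on $\DGVec$ together with the adjunction $C\otimes-\dashv\iVec(C,-)$ used in \Cref{proptensoringfunctor}. The real work is to verify that this identification intertwines the two convolution products: writing $\Delta_{C\otimes D}=(\Id\otimes B_{C,D}\otimes\Id)\circ(\Delta_C\otimes\Delta_D)$ and unwinding $f*g$ on each side, both reduce to $\sum\pm f(c'\otimes d')\,g(c''\otimes d'')$, and one must check that the Koszul signs introduced by currying agree with those coming from the braiding $B_{C,D}$ in $\Delta_{C\otimes D}$. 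Compatibility with the differentials and naturality in $C,D\in\cDGA$ and $B\in\DGA_0$ are then routine.

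Granting this, the rest is formal. The tensoring-cotensoring adjunctions of \Cref{proptensoringfunctor} give natural bijections
\begin{equation*}
    \DGA_0(C\rhd(D\rhd A),B)\cong\DGA_0(A,\{D,\{C,B\}\})\cong\DGA_0(A,\{C\otimes D,B\})\cong\DGA_0((C\otimes D)\rhd A,B),
\end{equation*}
natural in $A$ and $B$, so that $C\rhd(D\rhd A)$ is a second representing object for $\mathcal{M}(C\otimes D,A,-)$ and Yoneda produces the natural isomorphism $a$; being a comparison of representing objects it is automatically invertible and natural in $C,D,A$. It only remains to identify the universal measuring transported across this chain. Tracing the units of the three adjunctions, or equivalently invoking the composition law for measurings (which follows from the displayed algebra isomorphism and the functoriality of \Cref{propmeasuringfunctor}), shows that this measuring is the composite $u_{C,D\rhd A}\circ(1_C\otimes u_{D,A})\circ\alpha\colon(C\otimes D)\otimes A\to C\rhd(D\rhd A)$, which is precisely the top and right edges of the square. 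Hence $a$ is the unique algebra map with $a\circ u=u_{C,D\rhd A}\circ(1_C\otimes u_{D,A})\circ\alpha$, the square commutes, and uniqueness is immediate from the universal property of $u$ recorded in the first paragraph.
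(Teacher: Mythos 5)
Your proof is correct, and its engine — inducing $a$ from the universal property of the universal measuring $u$ — is the same mechanism the paper invokes. The difference is in what gets made explicit. The paper's proof is a two-line appeal to that universal property: for every measuring $v\colon C\otimes A\to B$ there is a unique algebra map $C\rhd A\to B$ factoring $v$ through $u$. It never explains why the composite $u\circ(1\otimes u)\circ\alpha$ along the top and right of the square is itself a measuring of $(C\otimes D,A)$ into $C\rhd(D\rhd A)$ — note this does \emph{not} follow from \Cref{propmeasuringfunctor}, since $1\otimes u$ is not of the form $g\circ v\circ(h\otimes f)$ there ($u_{D,A}$ is not an algebra map) — nor why the resulting $a$ is invertible and natural; both points are implicitly deferred to the Sweedler theory of \cite{Anel2013}. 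Your proposal supplies exactly these missing ingredients: (i) the natural dg-algebra isomorphism $\{C\otimes D,B\}\cong\{D,\{C,B\}\}$, which is the substantive content making $C\rhd(D\rhd A)$ a second representing object for $\mathcal{M}(C\otimes D,A;-)$ and the top-right composite a measuring (your sign check does work out; one small precision is that the plain curry gives $\{C\otimes D,B\}\cong\{C,\{D,B\}\}$ multiplicatively, so the symmetry $B_{C,D}$ is genuinely needed, as you indicate, to land on $\{D,\{C,B\}\}$ in the order demanded by the adjunction chain); and (ii) the Yoneda comparison of representing objects, which yields invertibility and naturality of $a$ for free, where the paper's argument as written only produces a unique algebra morphism. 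So the two proofs share one strategy, but yours is the complete version: it is the paper's proof with the elided key lemma stated, verified in outline, and actually used.
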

    \begin{proof}
        This follows from the universal property of $u$ i.e. for every measuring $v:C\otimes A \to B$ there exists a unique algebra morphism $f:C\rhd A \to B$ such that
        \begin{equation*}
                \begin{tikzcd}
                    C\otimes A \arrow[r,"v"] \arrow[d,"u"']& B\\
                    C\rhd A \arrow[ur,dashed,"\exists !f"']
                \end{tikzcd}
            \end{equation*}
            commutes.
    \end{proof}

    We now have sufficient background to prove the main result of this section. The reader should however note that the following result also follows as a straightforward corollary from Theorem 4.1.18 in \cite{Anel2013}.
    \begin{thm}\label{DGAclosedmodule}
        $(\DGA_0,\rhd,\eDGA,\{-,-\})$ is a closed $\cDGA$-module.
    \end{thm}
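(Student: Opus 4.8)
The plan is to assemble the pieces already constructed and to reduce the theorem to a single coherence check. The tensoring functor $\rhd$ is supplied by \Cref{proptensoringfunctor}, and the module associator — the natural isomorphism $(C\otimes D)\rhd A\to C\rhd(D\rhd A)$ playing the role of $\alpha$ in the module-category axiom — is exactly the isomorphism constructed in \Cref{lemmaassoc}. Both closedness conditions are already in hand: $\DGA_0$ is right closed because $\{C,-\}$ is right adjoint to $C\rhd-$ by \Cref{proptensoringfunctor}, with cotensoring $[-,-]=\{-,-\}$, and it is left closed because $\eDGA(A,-)$ is right adjoint to $-\rhd A$ by \Cref{cortensenrich}, with enrichment $\eDGA$. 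Since \Cref{lemmaassoc} already delivers $\alpha$ as a \emph{natural} isomorphism, the sole remaining obligation is to verify the module coherence pentagon relating $\alpha$ to the semi-monoidal associator $a$ of $\cDGA$ from \Cref{coDGAclosed}.

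To verify that pentagon, I would exploit the universal property of the free measuring. Recall from \Cref{propmeasuringrep} and the following remark that $C\rhd A$ represents the restricted measuring functor in its third variable through the universal measuring $u:C\otimes A\to C\rhd A$; concretely, two algebra morphisms out of $((X\otimes Y)\otimes Z)\rhd A$ coincide as soon as the measurings obtained by precomposing them with $u$ agree. It therefore suffices to show that the two composites around the pentagon, each an algebra morphism $((X\otimes Y)\otimes Z)\rhd A\to X\rhd(Y\rhd(Z\rhd A))$, become the same measuring $((X\otimes Y)\otimes Z)\otimes A\to X\rhd(Y\rhd(Z\rhd A))$ after precomposition with $u$.

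Both of these measurings are computed by repeatedly invoking the defining square of \Cref{lemmaassoc}, which rewrites each occurrence of $\alpha$ in terms of iterated universal measurings and the associator of the tensor product of dg-vector spaces. Chasing both legs of the pentagon down to the level of dg-linear maps out of the triple tensor $((X\otimes Y)\otimes Z)\otimes A$, I expect the comparison to collapse — by naturality of $u$ and functoriality of $\rhd$ — onto the pentagon identity for the associator $a$ of $(\cDGA,\otimes)$, tensored against $A$. That identity holds by \Cref{coDGAclosed} together with Mac Lane coherence, which was noted to remain valid in the semi-monoidal setting, so the module pentagon commutes and $(\DGA_0,\rhd,\eDGA,\{-,-\})$ is a closed $\cDGA$-module.

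The main obstacle will be the bookkeeping in this reduction: one must arrange the iterated universal measurings so that each application of \Cref{lemmaassoc} lands in the intended intermediate object, and confirm that the two legs really do reduce to the two sides of the monoidal pentagon rather than to some residual diagram still requiring independent verification. Once the reduction is organised so that every instance of $\alpha$ is replaced by its defining relation with $u$ and $a$, commutativity is forced by the uniqueness clause in the universal property of $u$, and no computation involving differentials or convolution products is needed.
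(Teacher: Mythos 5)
Your proposal is correct and follows essentially the same route as the paper: the paper also reduces the theorem to the module coherence pentagon for the associator of \Cref{lemmaassoc}, and verifies it by precomposing with the universal measuring $u$ (which is right-cancellative on algebra morphisms), collapsing the two legs onto the associativity pentagon for the underlying tensor product of dg-vector spaces. The only cosmetic difference is that the paper organises this chase as a prism-shaped diagram whose vertical faces commute by \Cref{lemmaassoc} and whose top face is the vector-space pentagon, which is precisely the bookkeeping you anticipated.
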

    \begin{proof}
    It remains to show that the coherence axiom is satisfied for the associator $a$ constructed in the \cref{lemmaassoc}. Consider the diagram
    \begin{equation*}
    \adjustbox{scale=0.6,center}{
    \begin{tikzcd}[column sep=tiny]
        & ((C\otimes D)\otimes E)\otimes A \arrow[rrrr,"\alpha"] \arrow[dl,"\alpha {\otimes} 1"'] \arrow[dd,shift right=2.5ex] &&&& (C\otimes D) \otimes (E\otimes A) \arrow[dl,"\alpha"] \arrow[dd]]\\
        (C\otimes (D\otimes E))\otimes A \arrow[rr,crossing over,"\alpha"] \arrow[dd] && C\otimes ((D\otimes E)\otimes A)\arrow[rr,"1\otimes \alpha"] && C\otimes (D\otimes (E\otimes A))
        \\
        & ((C\otimes D) \otimes E)\rhd A \arrow[rrrr,"a"] \arrow[dl,"\alpha \rhd 1"'] &&&& (C\otimes D)\rhd (E\rhd A) \arrow[dl,"a"]\\
        (C\otimes (D\otimes E)) \rhd A \arrow[rr,"a"] && C\rhd ((D\otimes E)\rhd A)\arrow[rr,"1\rhd a"] \arrow[from=uu,crossing over] && C \rhd (D\rhd (E\rhd A) \arrow[from=uu,crossing over]
    \end{tikzcd}
    }
    \end{equation*}
    where the vertical arrows consist of the universal element $u$ applied as demanded by the diagram. We conclude that every vertical face commutes by \Cref{lemmaassoc} and that the top face commutes by the monoidal structure of the tensor product $\otimes$ on vector spaces. Thus after precomposition with the morphism $u:((C\otimes D)\otimes E) \otimes A \to ((C\otimes D)\otimes E)\rhd A$ the bottom face commutes. But by the universal property of $u$ we have that $u$ is right-cancellative on algebra morphisms. Hence the bottom face commutes.
    \end{proof}

\section{Semi-monoidal model structure on $\cDGA$}\label{seccoDGAmodel}  
    For showing that $\cDGA$ is a semi-monoidal model category, we will first establish that the tensor product functor of $\cDGA$ preserves (acyclic) cofibrations in each variable separately.
    \begin{lemma}\label{lemmatensorpres}
        The tensor product functor
        \begin{equation*}
            \otimes: \cDGA \times \cDGA \to \cDGA,
        \end{equation*}
        preserves cofibrations and weak equivalences in each variable separately.
    \end{lemma}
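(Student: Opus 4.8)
The plan is to use the braiding of the symmetric semi-monoidal structure on $\cDGA$ to reduce to a single variable, so that it suffices to show that $(-)\otimes D$ preserves cofibrations and weak equivalences for each fixed $D\in\cDGA$. For cofibrations there is essentially nothing to do: cofibrations in $\cDGA$ are precisely the injective morphisms, and since we work over a field every object of $\DGVec$ is flat, so $(-)\otimes D$ is exact on underlying complexes and in particular carries injections to injections.

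The substance lies in the weak equivalences, which I would handle in two stages. First I would show that $(-)\otimes D$ preserves the generating class of filtered quasi-isomorphisms. Let $f\colon C\to C'$ be a filtered quasi-isomorphism, witnessed by admissible filtrations $\mathcal{F}_C$ and $\mathcal{F}_{C'}$, and fix any admissible filtration $\mathcal{G}$ on $D$, for instance the coradical filtration. On $C\otimes D$ I would take the convolution filtration $H_n:=\sum_{i+j=n}F_i\otimes G_j$, and likewise on $C'\otimes D$. Using admissibility of $\mathcal{F}_C$ and $\mathcal{G}$ one checks directly that $H$ is admissible: it is preserved termwise by the differential, and the tensor-product comultiplication (built from $\Delta_C$, $\Delta_D$ and the braiding) sends $H_n$ into $\bigoplus_k H_{n-k}\otimes H_k$, since $\Delta_C$ and $\Delta_D$ each decompose a filtration-$i$ (resp. $j$) element into a sum of tensors of strictly smaller pieces. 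The associated graded then splits as
\[
\operatorname{gr}(H)_n\;\cong\;\bigoplus_{i+j=n}\operatorname{gr}(\mathcal{F}_C)_i\otimes\operatorname{gr}(\mathcal{G})_j,
\]
and in weight $n$ the induced map $\operatorname{gr}(f\otimes\Id_D)$ becomes $\bigoplus_{i+j=n}\operatorname{gr}(f)_i\otimes\Id_{\operatorname{gr}(\mathcal{G})_j}$. Each $\operatorname{gr}(f)_i$ is a quasi-isomorphism by hypothesis; since we are over a field, tensoring a quasi-isomorphism of complexes with a fixed complex again yields a quasi-isomorphism (Künneth, with no Tor contributions), and a finite direct sum of quasi-isomorphisms is a quasi-isomorphism. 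Hence $\operatorname{gr}(f\otimes\Id_D)$ is a quasi-isomorphism in each weight, so $f\otimes\Id_D$ is again a filtered quasi-isomorphism.

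Second, I would bootstrap from the generators to all weak equivalences. Writing $W$ for the class of weak equivalences of $\cDGA$, i.e. the smallest class containing the filtered quasi-isomorphisms and closed under $2$-out-of-$3$, set $W_D:=\{\,f: f\otimes\Id_D\in W\,\}$. Because $(-)\otimes D$ is a functor and $W$ has the $2$-out-of-$3$ property, $W_D$ is itself closed under $2$-out-of-$3$, and by the previous step it contains every filtered quasi-isomorphism; minimality of $W$ then forces $W\subseteq W_D$, so $(-)\otimes D$ preserves weak equivalences. Together with the cofibration case and the symmetry supplied by the braiding, this yields the lemma. I expect the only genuinely delicate point to be the filtered-quasi-isomorphism step, namely producing an admissible filtration on $C\otimes D$ whose associated graded is the tensor product of the graded pieces, so that the field-coefficient Künneth theorem applies degreewise; the mild clash with the convention that admissible filtrations start at $F_1$ is harmless, as the convolution filtration simply has $H_1=0$. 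Everything else, the exactness argument for cofibrations and the $2$-out-of-$3$ bookkeeping for the generated class, is formal.
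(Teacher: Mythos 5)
Your proof is correct, and its skeleton --- cofibrations via injectivity and exactness of the tensor product over a field, reduction of weak equivalences to filtered quasi-isomorphisms by the $2$-out-of-$3$ minimality argument, and then an explicit admissible filtration on the tensor product --- is the same as the paper's. The one genuine difference is the filtration you place on $C\otimes D$. The paper does not filter the second factor at all: it takes $F_{C\otimes E}:=F_C\otimes E$, i.e.\ $F_n(C\otimes E)=F_n(C)\otimes E$, which is admissible by the same kind of computation you perform (admissibility of $F_C$ alone suffices, since $\Delta_E(E)\subset E\otimes E$), and whose associated graded is simply $\operatorname{gr}(F_C)\otimes E$. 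The induced map on associated gradeds is then $\operatorname{gr}f\otimes\Id_E$, a quasi-isomorphism because tensoring with a fixed complex over a field preserves quasi-isomorphisms --- no decomposition of the associated graded into a convolution of graded pieces is needed. Your filtration $H_n=\sum_{i+j=n}F_i\otimes G_j$ is the more canonical choice (it is symmetric in the two factors and is what one would use for any tensor product of filtered objects), but it costs you the extra steps of invoking the coradical filtration on $D$ and proving $\operatorname{gr}(H)\cong\operatorname{gr}(\mathcal{F}_C)\otimes\operatorname{gr}(\mathcal{G})$, which requires choosing splittings over the ground field. Both routes land in the same place; the paper's asymmetric filtration is just leaner, while yours generalises more readily to situations where one wants control over both tensor factors at once.
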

    \begin{proof}
         We first note that the forgetful functor $U:\cDGA \to \DGVec$ commutes with the tensor product and preserves cofibrations. The preservation of cofibrations under the tensor product of $\cDGA$ then follows from the $\DGVec$ case. 
         
         It remains to show the preservation of weak equivalences. Since the class of weak equivalences is the closure of the class of filtered quasi-isomorphism under the 2 out of 3 property it suffices to show the preservation of filtered quasi-isomorphisms. Thus let $f:C \to D$ be a filtered quasi-isomorphism and let $E$ be a conilpotent coalgebra. By assumption there exist admissible filtrations $F_C$ and $F_D$, of $C$ and $D$ respectively, such that
        \begin{equation*}
            \text{gr}^if:\text{gr}_C^i F_C \to \text{gr}_D^i F_D
        \end{equation*}
        is a quasi-isomorphism in each degree. We define filtrations $F_{C\otimes E} := F_C \otimes E$ and $F_{D\otimes E} := F_D \otimes E$ and note that they are admissible. Further noting that $\text{gr}_{C\otimes E} \cong \text{gr}_C \otimes E$ we get an induced quasi-isomorphism,
        \begin{equation*}
            \begin{tikzcd}
                \text{gr}^i F_{C\otimes E} \arrow[r,"\sim"] \arrow[d,phantom,"\cong"] & \text{gr}^i F_{D\otimes E} \arrow[d,phantom,"\cong"] \\
                \text{gr}^i F_{C} \otimes E \arrow[r,"\sim","\text{gr}^i f \otimes E"'] & \text{gr}^i F_{D} \otimes E,
            \end{tikzcd}
        \end{equation*}
        using that the tensor product of dg-vector spaces preserves quasi-isomorphism. Thus $f\otimes E$ is a filtered quasi-isomorphism.
    \end{proof}
    
    \begin{thm}\label{coDGAmonoidalmodel}
    $(\cDGA,\otimes)$ is a symmetric semi-monoidal model category. 
\end{thm}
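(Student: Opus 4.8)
The plan is to verify that $\otimes\colon \cDGA \times \cDGA \to \cDGA$ is a (left) Quillen bifunctor, since by \Cref{coDGAclosed} we already know $(\cDGA,\otimes)$ is closed symmetric semi-monoidal, and the model structure on $\cDGA$ was recalled earlier. Being a Quillen bifunctor means that given cofibrations $f\colon C\to C'$ and $g\colon D\to D'$, the pushout-product map
\begin{equation*}
    f\,\square\,g\colon (C'\otimes D)\cup_{C\otimes D}(C\otimes D') \to C'\otimes D'
\end{equation*}
is a cofibration in $\cDGA$, which is moreover acyclic whenever either $f$ or $g$ is. The natural route is to reduce everything to the underlying category $\DGVec$ via the forgetful functor $U\colon \cDGA \to \DGVec$, exploiting two facts established in \Cref{lemmatensorpres}: that $U$ commutes with $\otimes$, and that $U$ detects cofibrations (which in $\cDGA$ are the injections) and preserves weak equivalences of the relevant kind.

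First I would recall that in $\cDGA$ cofibrations are exactly the injective morphisms, and that $U$ both preserves and reflects injections; hence checking that $f\,\square\,g$ is a cofibration amounts to checking its underlying map of dg-vector spaces is injective. Since $U(C\otimes D)=U(C)\otimes U(D)$ and $U$ preserves pushouts (colimits in $\cDGA$ are computed on underlying complexes, as $U$ is a left adjoint via $T^{\operatorname{co}}$), the underlying pushout-product of $f\,\square\,g$ coincides with the pushout-product of $Uf$ and $Ug$ in $\DGVec$. The monoidal model structure on $\DGVec$ (where cofibrations are injections and $\otimes$ is the standard Quillen bifunctor on chain complexes over a field) then gives injectivity of this map directly, so $f\,\square\,g$ is a cofibration.

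For the acyclicity clause, suppose in addition that $g$ (say) is a weak equivalence. Here I would argue componentwise on the pushout-product using \Cref{lemmatensorpres}, which already shows that tensoring with a fixed conilpotent coalgebra preserves cofibrations and weak equivalences in each variable separately. Concretely, the pushout-product fits into a diagram whose legs are of the form $C\otimes(-)$ and $(-)\otimes D'$; since tensoring with a fixed object preserves both cofibrations and weak equivalences, the standard cube argument (comparing the map $C'\otimes D\to C'\otimes D'$ induced by $g$, which is an acyclic cofibration, with the base map $C\otimes D\to C\otimes D'$) forces $f\,\square\,g$ to be a weak equivalence by the two-out-of-three property and the gluing lemma for the left transferred model structure.

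The main obstacle I anticipate is precisely this acyclicity step: weak equivalences in $\cDGA$ are defined as the saturation of filtered quasi-isomorphisms under two-out-of-three, so they are not a priori closed under the pushouts and gluing one needs, and filtered quasi-isomorphisms themselves need not be stable under cobase change along arbitrary cofibrations. I would handle this by first proving the pushout-product axiom for filtered quasi-isomorphisms using the admissible filtrations $F_C\otimes E$ constructed in the proof of \Cref{lemmatensorpres} and the fact that $\operatorname{gr}$ commutes with $\otimes$, reducing to the gluing lemma for ordinary quasi-isomorphisms on each associated-graded piece, and then invoking that the full class of weak equivalences is the two-out-of-three closure to conclude. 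An alternative, cleaner route, if available, is to transfer the Quillen bifunctor property across the adjunction $T^{\operatorname{co}}\dashv U$ and the left-transferred structure, but the filtration argument is the most self-contained.
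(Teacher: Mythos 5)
Your cofibration half coincides with the paper's argument: cofibrations in $\cDGA$ are the injections, the forgetful functor $U\colon\cDGA\to\DGVec$ preserves colimits (it is a left adjoint of $\check{T}_0$, resp.\ $T^{\operatorname{co}}$) and commutes with $\otimes$, so injectivity of the pushout-product map is checked in $\DGVec$. The gap is in the acyclicity step --- precisely the point you flag as your ``main obstacle'' --- and the resolution is both simpler than what you propose and different in kind. Suppose $g\colon D\to D'$ is an acyclic cofibration. By \Cref{lemmatensorpres}, $\Id_C\otimes g$ is injective and a weak equivalence, i.e.\ an \emph{acyclic cofibration} in the model category $\cDGA$; this model structure exists prior to and independently of the theorem (it is quoted from Positselski). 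In any model category the acyclic cofibrations are exactly the maps with the left lifting property against all fibrations, hence are closed under cobase change. Therefore the pushout leg $\iota_1\colon C'\otimes D\to (C'\otimes D)\amalg_{C\otimes D}(C\otimes D')$ is an acyclic cofibration, and since $\Id_{C'}\otimes g=(f\square g)\circ\iota_1$ is a weak equivalence (again by \Cref{lemmatensorpres}), two-out-of-three gives that $f\square g$ is a weak equivalence. This is exactly the paper's proof. Note that one never pushes out a bare weak equivalence, only an acyclic cofibration; so your worry that weak equivalences of $\cDGA$ are not stable under cobase change, while true as stated, is aimed at the wrong class of maps, and no gluing lemma or filtration argument is needed.

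Your proposed repair, by contrast, does not close the gap. A general acyclic cofibration of $\cDGA$ need not be a filtered quasi-isomorphism --- it only lies in the two-out-of-three saturation of that class --- so proving a pushout-product property for filtered quasi-isomorphisms is not enough, and the final step ``invoke that weak equivalences are the two-out-of-three closure'' is not valid: unlike in \Cref{lemmatensorpres}, where one applies a single functor $C\otimes(-)$ (and a functor carries the commutative triangles through which the saturation is built to commutative triangles), the assignment $g\mapsto f\square g$ involves pushouts depending on $g$ and does not commute with taking two-out-of-three closures. If you prefer your first sketch via the cube/gluing lemma, it can in fact be justified, since every object of $\cDGA$ is cofibrant (cofibrations are the injections, and the initial object is $0$), so the cube lemma applies to the comparison of the pushout defining $f\square g$ with the constant-corner pushout computing $C'\otimes D'$; but your text neither proves nor cites this cofibrancy, and the lifting-property argument above is shorter.
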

\begin{proof}
We have to show that $\otimes$ is a Quillen bifunctor. Let $i:C\to C'$ be a cofibration and $j:D\to D'$ an (acyclic) cofibration in $\cDGA$. The relevant pushout diagram is 
\begin{equation*}
        \begin{tikzcd}
            C\otimes D \arrow[r,hookrightarrow,"{\Id_C \otimes j}","\sim"'] \arrow[d,hookrightarrow,"{i \otimes \Id_D}"'] & C\otimes D' \arrow[d,hookrightarrow,"\iota_2"] \arrow[dddrr,hookrightarrow,bend left,"{i \otimes \Id_{D'}}"]\\
            C'\otimes D \arrow[r,hookrightarrow,"\iota_1"',"\sim"] \arrow[ddrrr,hookrightarrow,bend right,swap,"{\Id_{C'} \otimes j}","\sim"'] & (C'\otimes D) \underset{C\otimes D}{\coprod} (C\otimes D') \arrow[ddrr,dashed,"\exists !"]\\\\
            & & & C'\otimes D'
        \end{tikzcd}
    \end{equation*}
    where we use that the (acyclic) cofibrations are closed under pullback, and \Cref{lemmatensorpres}. 
    
    We see that in the acyclic case we get that the pushout map is a weak equivalence by the 2 out of 3 property. That the pushout map is injective follows from the dg-vector space case as colimits and cofibrations are preserved by the forgetful functor to $\DGVec$, which furthermore commutes with the tensor product.
\end{proof}

\section{Homotopical enrichment of $\DGA_0$}\label{secDGAmodel}
    To show that $\DGA_0$ is a model $\mathcal{\cDGA}$ category we need to show that the tensoring functor $\rhd$ is a Quillen bifunctor. However since cofibrations in $\cDGA$ and fibrations in $\DGA_0$ are particularly easy to work with, we will make use of the equivalent condition for the cotensoring functor $\{-,-\}$. That is we  will show that for every cofibration $i:C\to C'$ in $\cDGA$ and every fibration $j:B\to B'$ in $\DGA_0$ the induced map
    \begin{equation*}
        \{A',B\} \to \{A,B\} \times_{\{A,B'\}} \{A',B'\},
    \end{equation*}
    is a cofibration, which furthermore is acyclic if either $i$ or $j$ is. 
\begin{lemma}\label{convpres(co)fib}
    The convolution algebra functor 
    \begin{equation*}
    \{-,-\}:\left(\cDGA\right)^{\operatorname{op}} \times \DGA_0 \to \DGA_0,
    \end{equation*}
    takes (acyclic) cofibrations in the first variable and (acyclic) fibrations in the second variable to (acyclic) fibrations separately. 
\end{lemma}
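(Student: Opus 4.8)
The plan is to verify the four sub-claims separately: that $\{-,-\}$ lands in surjections (hence fibrations) when fed a cofibration in the first variable or a fibration in the second, and that it additionally preserves quasi-isomorphisms in the two acyclic cases. Throughout I will use that we work over a field, so that every degreewise injection and every degreewise surjection of graded vector spaces splits, and every quasi-isomorphism of complexes is a chain homotopy equivalence.

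First I would dispatch the surjectivity (fibration) claims. Recall that $\{i,B\}=\iVec(i,B)$ sends $\phi$ to $\phi\circ i$ and $\{C,j\}=\iVec(C,j)$ sends $\phi$ to $j\circ\phi$, and both are morphisms of convolution algebras because $i$ is a coalgebra map and $j$ an algebra map. If $i:C\to C'$ is a cofibration, i.e.\ injective, choose a graded retraction $r:C'\to C$; then $f\mapsto f\circ r$ is a graded section of $\{i,B\}$, so $\{i,B\}$ is surjective. Dually, if $j:B\to B'$ is a fibration, i.e.\ surjective, choose a graded section $s:B'\to B$; then $g\mapsto s\circ g$ is a graded section of $\{C,j\}$, so $\{C,j\}$ is surjective. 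For the acyclic case in the second variable, if $j$ is moreover a quasi-isomorphism then, over a field, it is a chain homotopy equivalence of underlying complexes; since $\iVec(C,-)$ is an additive dg-functor it preserves chain homotopies, so $\iVec(C,j)$ is again a chain homotopy equivalence and in particular a quasi-isomorphism.

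The main work is the acyclic case in the first variable, where $i:C\to C'$ is a weak equivalence of conilpotent coalgebras and I must show $\iVec(i,B)$ is a quasi-isomorphism. I would first reduce to filtered quasi-isomorphisms: the class of morphisms $f$ with $\iVec(f,B)$ a quasi-isomorphism is closed under the $2$-out-of-$3$ property, since $\iVec(-,B)$ is a contravariant functor and quasi-isomorphisms satisfy $2$-out-of-$3$; as the weak equivalences of $\cDGA$ are by definition the smallest such class containing the filtered quasi-isomorphisms, it suffices to treat the latter. So let $i$ be a filtered quasi-isomorphism with admissible filtrations $F_\bullet C$, $F_\bullet C'$ whose associated graded map $\operatorname{gr} i$ is a quasi-isomorphism on each filtration layer. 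Applying $\iVec(-,B)$ to the increasing exhaustive filtration produces a complete Hausdorff decreasing filtration $G^p=\{\phi : \phi|_{F_{p-1}C}=0\}$ with $\iVec(C,B)=\lim_p\iVec(F_{p-1}C,B)$, surjective transition maps, and associated graded $G^p/G^{p+1}\cong\iVec(\operatorname{gr}_pC,B)$, and similarly for $C'$.

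Then I would argue in two steps. By induction on $n$, using that each inclusion $F_{n-1}\hookrightarrow F_n$ splits to give a short exact sequence $0\to\iVec(\operatorname{gr}_nC,B)\to\iVec(F_nC,B)\to\iVec(F_{n-1}C,B)\to0$, the long exact sequence and the five lemma show that $\iVec(F_n i,B):\iVec(F_nC',B)\to\iVec(F_nC,B)$ is a quasi-isomorphism for every $n$, the base case and each graded layer being handled by the over-a-field preservation of quasi-isomorphisms applied to $\operatorname{gr}_n i$. Passing to the limit, both $\iVec(C,B)$ and $\iVec(C',B)$ are inverse limits of towers with surjective transition maps, so each fits into a Milnor $\lim^1$ sequence; since $\iVec(F_\bullet i,B)$ is a levelwise isomorphism on homology it is an isomorphism of towers, hence induces isomorphisms on both $\lim$ and $\lim^1$, and the five lemma applied to the two Milnor sequences shows that $\iVec(i,B)$ is a quasi-isomorphism. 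The main obstacle is precisely this passage to the inverse limit: one must ensure the transition maps are surjective so that $R\lim$ is computed by the Milnor sequence and no residual $\lim^1$ obstruction survives, which is exactly where working over a field, splitting the admissible filtration, is essential.
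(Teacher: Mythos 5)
Your proof is correct, but on the one genuinely nontrivial point it takes a different route from the paper's. The paper disposes of both acyclic cases at once in a few lines: it observes that every weak equivalence in $\cDGA$ is in particular a quasi-isomorphism of underlying complexes, that the forgetful functor to $\DGVec$ commutes with $\{-,-\}$ (so that the convolution algebra is just $\iVec$ on underlying complexes), and that $\iVec$ is exact over a field; preservation of quasi-isomorphisms in both variables follows immediately. Your handling of the two surjectivity claims and of the acyclic case in the second variable is the same argument, just written out (splittings over a field, quasi-isomorphisms being homotopy equivalences). Where you diverge is the acyclic cofibration in the first variable: you deliberately avoid invoking the fact that weak equivalences of $\cDGA$ are quasi-isomorphisms, and instead reduce by the $2$-out-of-$3$ closure to filtered quasi-isomorphisms, then prove directly that $\iVec(i,B)$ is a quasi-isomorphism by induction up the admissible filtration followed by a Milnor ${\lim}^1$ passage to the inverse limit, using surjectivity of the transition maps (again a consequence of working over a field) to kill the ${\lim}^1$ obstruction. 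This costs more work, but it is self-contained: the paper's shortcut leans on the nontrivial external fact---essentially the colimit-side dual of your induction---that a filtered quasi-isomorphism of conilpotent coalgebras is a quasi-isomorphism of underlying complexes, which it asserts with a citation rather than proves. Your hom-side filtration argument is also the shape of argument one is forced to run in settings where weak equivalences are strictly coarser than (or not comparable to) quasi-isomorphisms, e.g. for curved coalgebras, whereas the paper's argument is tied to the uncurved situation. Note that both routes implicitly require admissible filtrations to be exhaustive (yours to write $\iVec(C,B)$ as the limit of the tower, the paper's to know filtered quasi-isomorphisms are quasi-isomorphisms); this is tacit in the paper's definition and worth flagging either way.
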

\begin{proof}
    That we get fibrations in either case is immediate. For the second part we show the stronger statement that the convolution algebra functor preserves quasi-isomorphisms. This is sufficient as every weak equivalence in $\cDGA$ by necessity is also a quasi-isomorphism. Next note that the forgetful functor to $\DGVec$ preserves quasi-isomorphisms and commutes with the cotensoring functor. That is the convolution algebra functor is taken to the internal hom of dg-vector spaces. As the internal hom of dg-vector spaces is exact the preservation of quasi-isomorphisms follows.
\end{proof}

\begin{thm}\label{DGAmonoidalmodel}
    $(\DGA_0,\overline{\DGA}_0,\rhd,\{-,-\})$ is a $\cDGA$-model category.
\end{thm}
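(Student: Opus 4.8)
The plan is to verify that the tensoring functor $\rhd:\cDGA \times \DGA_0 \to \DGA_0$ is a left Quillen bifunctor. By \Cref{DGAclosedmodule} we already know $(\DGA_0,\rhd,\eDGA,\{-,-\})$ is a closed $\cDGA$-module, and by \Cref{coDGAmonoidalmodel} the base $(\cDGA,\otimes)$ is a semi-monoidal model category, so the only thing left to establish is the homotopical compatibility encoded in the pushout-product (equivalently, pullback-hom) axiom. Rather than attacking $\rhd$ directly, I would exploit the adjunction structure: since $C\rhd(-)$ is left adjoint to the cotensoring $\{C,-\}$ and $(-)\rhd A$ is left adjoint to $\eDGA(A,-)$ (see \Cref{cortensenrich} and the adjunction summary), the statement that $\rhd$ is a left Quillen bifunctor is equivalent, by the standard adjoint reformulation of the pushout-product axiom, to the statement that the cotensoring $\{-,-\}$ satisfies the dual pullback-hom condition. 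This reformulation is precisely the one already announced before \Cref{convpres(co)fib}.

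Concretely, I would show that for every cofibration $i:C\to C'$ in $\cDGA$ and every fibration $j:B\to B'$ in $\DGA_0$, the induced map
\begin{equation*}
    \{C',B\} \to \{C,B\} \times_{\{C,B'\}} \{C',B'\}
\end{equation*}
is a fibration in $\DGA_0$ (that is, surjective), and that it is additionally a weak equivalence whenever either $i$ or $j$ is acyclic. The key input is \Cref{convpres(co)fib}, which tells us that $\{-,-\}$ sends (acyclic) cofibrations in the first variable and (acyclic) fibrations in the second to (acyclic) fibrations separately. Since fibrations in $\DGA_0$ are exactly the surjections, and since the forgetful functor $U:\DGA_0 \to \DGVec$ preserves and reflects surjectivity, quasi-isomorphisms, and the relevant pullbacks, the entire pullback-hom condition can be checked after forgetting down to dg-vector spaces. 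There $U\{C,B\}\cong \iVec(C,B)$, the internal hom, is exact in each variable, so the induced pullback-hom map becomes the usual exact-functor pullback-hom map for $\iVec$, whose surjectivity and acyclicity are standard.

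The steps in order: first, invoke the adjoint characterisation to reduce the Quillen bifunctor condition on $\rhd$ to the pullback-hom condition on $\{-,-\}$. Second, transport the entire problem along the forgetful functor $U$ to $\DGVec$, using that $U$ commutes with $\{-,-\}$ (sending it to $\iVec$), preserves colimits and the pullbacks in question, and detects fibrations (surjections) and weak equivalences (quasi-isomorphisms). Third, apply \Cref{convpres(co)fib} together with exactness of $\iVec(-,-)$ to conclude that the pullback-hom map is surjective, and acyclic when $i$ or $j$ is. Finally, conclude via the adjunction that $\rhd$ is a left Quillen bifunctor, which together with \Cref{DGAclosedmodule} and \Cref{coDGAmonoidalmodel} yields that $(\DGA_0,\eDGA,\rhd,\{-,-\})$ is a $\cDGA$-module model category.

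The main obstacle I anticipate is not the homotopical bookkeeping but making the adjoint reduction fully rigorous in the semi-monoidal, non-unital setting: the standard equivalence between the pushout-product axiom for a left Quillen bifunctor and the pullback-hom axiom for its right adjoint is usually stated for genuinely monoidal or unital module structures, so I would need to confirm that the purely formal adjunction arguments (which never use the unit) carry over verbatim to $\cDGA$-modules. Granting that — which is consistent with the paper's running principle that the monoidal proofs transfer to the semi-monoidal case by dropping unit axioms — the remaining verification is routine because everything is controlled by the exactness of $\iVec$ after applying $U$.
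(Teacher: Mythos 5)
Your proposal is correct and follows essentially the same route as the paper: both reduce the left Quillen bifunctor condition on $\rhd$ to the pullback-hom condition on the convolution algebra $\{-,-\}$, invoke \Cref{convpres(co)fib}, and verify the remaining point-set claims after applying the forgetful functor to $\DGVec$. The only cosmetic difference is that the paper deduces acyclicity of the induced map $\{C',A\} \to \{C,A\} \times_{\{C,A'\}} \{C',A'\}$ by a 2-out-of-3 argument in $\DGA_0$ (using stability of (acyclic) fibrations under pullback), whereas you check it directly in $\DGVec$ via exactness of $\iVec$; both work and rest on the same underlying fact that weak equivalences in $\cDGA$ are in particular quasi-isomorphisms.
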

\begin{proof}
Let $i:C\hookrightarrow C'$ be a cofibration in $\cDGA$ and $j:A\twoheadrightarrow A'$ a (acyclic) fibration in $\DGA_0$. The relevant pullback diagram is
\begin{equation*}
    \begin{tikzcd}
        \{C',A\} \arrow[drr,twoheadrightarrow,bend left,"{\{C',j\}}"] \arrow[ddr,twoheadrightarrow,bend right,"{\{i,A\}}"'] \arrow[dr,dashed,"\exists !"]\\
        & \{C,A\} \underset{\{C,A'\}}{\prod} \{C',A'\} \arrow[d,twoheadrightarrow] \arrow[r,twoheadrightarrow] & \{C',A'\} \arrow[d,twoheadrightarrow,"{\{i,A'\}}"]\\
        & \{C,A\} \arrow[r,twoheadrightarrow,"{\{C,j\}}"',"\sim"] & \{C,A'\}
    \end{tikzcd}
\end{equation*}
where we use that (acyclic) fibrations are closed on pullback, and \Cref{convpres(co)fib}. It follows from the 2 out of 3 property that the induced morphism is a weak equivalence. Similarly had we instead started assuming that $i$ was acyclic we would've reached the same conclusion. That the pullback product map is surjective follows from the dg-vector space case as colimits and cofibrations are preserved by the forgetful functor to $\DGVec$ which furthermore commutes with the cotensoring.
\end{proof}

 \section{The com-Lie case}
We have so far, in \cref{secassocmodule,seccoDGAmodel,secDGAmodel}, shown the homotopical enrichment corresponding to the case of associative dg Koszul duality. We will now similarly proceed with homotopical enrichment in the com-Lie case of Koszul duality. That is we will show that $\DGLA$ is a semi-module model category over $\cCDGA$. The procedure will be completely analogous to the associative case.

We first note that $(\cCDGA,\otimes)$ is symmetric semi-monoidal, which follows from the $\cDGA$ case. We have that it is also closed by the following.
 \begin{thm}\label{coCDGAclosed}
    The category $(\cCDGA,\otimes)$ is closed symmetric semi-monoidal.
\end{thm}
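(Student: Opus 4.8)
The plan is to mirror the proof of \Cref{coDGAclosed} verbatim, replacing the conilpotent cofree functor $T^{\operatorname{co}}$ by the cocommutative conilpotent cofree functor $S^{\operatorname{co}}$ everywhere. Since $\cCDGA$ is a full subcategory of $\cDGA$ (cocommutative objects), the forgetful functor $U:\cCDGA \to \DGVec$ still admits a right adjoint, now $S^{\operatorname{co}}$, and it is this adjunction that does all the work. I would need to construct an internal hom functor $\icCDGA(-,-):(\cCDGA)^{\operatorname{op}} \times \cCDGA \to \cCDGA$ with $\icCDGA(C,-)$ right adjoint to $-\otimes C$ for every $C \in \cCDGA$.

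First I would treat the cofree case. For a cocommutative conilpotent cofree coalgebra $S^{\operatorname{co}}V$ I would assemble the chain of natural isomorphisms
\begin{equation*}
\begin{aligned}
&\cCDGA(C_0 \otimes C_1, S^{\operatorname{co}}V) \cong \DGVec(C_0 \otimes C_1, V) \cong\\
&\DGVec(C_0,\iVec(C_1,V)) \cong \cCDGA(C_0, S^{\operatorname{co}}\iVec(C_1,V)),
\end{aligned}
\end{equation*}
using the $U \dashv S^{\operatorname{co}}$ adjunction at both ends and the tensor-hom adjunction of $\DGVec$ in the middle. This licenses the definition $\icCDGA(C,S^{\operatorname{co}}V) := S^{\operatorname{co}}\iVec(C,V)$. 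Here I must check that $C_0 \otimes C_1$ is again cocommutative and conilpotent: conilpotence is \Cref{propconilclosedundertensor}, and cocommutativity of a tensor of cocommutative coalgebras is immediate from the symmetry of the braiding, so the tensor product indeed restricts to $\cCDGA \times \cCDGA \to \cCDGA$ and the first isomorphism is well-posed.

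Next I would handle an arbitrary $D \in \cCDGA$ by writing it as the equaliser of the cosimplicial-type diagram $D \to S^{\operatorname{co}}D \rightrightarrows (S^{\operatorname{co}})^2 D$ coming from the comonad $S^{\operatorname{co}}U$, and then defining $\icCDGA(C,D)$ as the equaliser of the induced pair
\begin{equation*}
\icCDGA(C,S^{\operatorname{co}}D) \rightrightarrows \icCDGA(C,(S^{\operatorname{co}})^2 D),
\end{equation*}
each term of which is already defined by the cofree case. Because $\cCDGA(C_0,-)$ preserves limits (it is a right adjoint, hom into a fixed object preserving limits in the target), applying it to this equaliser and comparing with the defining equaliser of $D$ yields the desired natural bijection $\cCDGA(C_0 \otimes C, D) \cong \cCDGA(C_0, \icCDGA(C,D))$. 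The one point requiring slightly more care than the associative case, and the step I expect to be the main obstacle, is verifying that forming the equaliser does not leave the cocommutative world and that the comonadic resolution $D \to S^{\operatorname{co}}D \rightrightarrows (S^{\operatorname{co}})^2 D$ is genuinely an equaliser in $\cCDGA$; this is the analogue of the resolution used in \Cref{coDGAclosed} but now for the comonad generated by $U \dashv S^{\operatorname{co}}$ rather than $U \dashv T^{\operatorname{co}}$, and it relies on $\cCDGA$ being comonadic over $\DGVec$. Symmetry of the semi-monoidal structure is inherited directly from $\cDGA$, so closedness is the only substantive claim, and it follows once the two equaliser arguments above are in place.
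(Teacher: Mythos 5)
Your proof is correct and takes essentially the same route as the paper: the paper defines $\icCDGA(C,S^{\operatorname{co}}V) := S^{\operatorname{co}}\iVec(C,V)$ on cocommutative cofree objects, extends to arbitrary $D$ via the equaliser of the resolution $D \to S^{\operatorname{co}}D \rightrightarrows (S^{\operatorname{co}})^2D$, and concludes by limit-preservation exactly as in \Cref{coDGAclosed}. The additional points you flag (closure of $\cCDGA$ under $\otimes$ and the comonadic nature of the resolution) are sound and, if anything, spelled out in more detail than in the paper, which simply defers to the associative case.
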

\begin{proof}
    For a cocommutative cofree coalgebra $S^{\operatorname{co}}V$ we define the internal hom $\icDGA(C,S^{\operatorname{co}}V)$ as
    \begin{equation*}
        \icCDGA(C,S^{\operatorname{co}}V) := S^{\operatorname{co}}\iVec(C,V).
    \end{equation*}
    For an arbitrary cocommutative coalgebra $D$ we can write it as an equaliser
    \begin{equation*}
        \begin{tikzcd}
            D \arrow[r] & S^{\operatorname{co}}D \arrow[r,shift left=0.5ex] \arrow[r,shift right=0.5ex] & (S^{\operatorname{co}})^2D.
        \end{tikzcd}
    \end{equation*}
    We then define the internal hom functor $\icCDGA(C,D)$ as the equaliser of
    \begin{equation*}
        \begin{tikzcd}
            \icCDGA(C,S^{\operatorname{co}}D) \arrow[r,shift left=0.5ex] \arrow[r,shift right=0.5ex] \arrow[d,phantom,"\cong"] & \icCDGA(C,(S^{\operatorname{co}})^2D) \arrow[d,phantom,"\cong"] \\
            S^{\operatorname{co}}\iVec(C,D) \arrow[r,shift left=0.5ex] \arrow[r,shift right=0.5ex] & S^{\operatorname{co}}\iVec(C,S^{\operatorname{co}}D).
        \end{tikzcd}
    \end{equation*}
    That this functor is right adjoint to the tensoring functor now follows by the same argument as in the proof of \Cref{coDGAclosed}.
\end{proof}

We will next establish the $\cCDGA$-module structure of $\DGLA$.

\begin{prop}\label{propdglaenrichmentfunctor}
        There exists a functor
        \begin{equation*}
            \overline{\DGLA}(-,-):\DGLA^{\operatorname{op}} \times \DGLA \to \cCDGA,
        \end{equation*}
        such that $\overline{\DGLA}(-,\mathfrak{h})$ is right adjoint to the opposite convolution algebra functor $\{-,\mathfrak{h}\}^{\operatorname{op}}$ for each Lie algebra $\mathfrak{h}\in \DGLA$. We will refer to $\overline{\DGLA}$ as the enrichment functor.
    \end{prop}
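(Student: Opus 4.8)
The plan is to run the proof of \Cref{propenrichmentfunctor} verbatim, making the three substitutions dictated by the com-Lie setting: the free algebra functor $T_0$ becomes the free Lie algebra functor $T_{\operatorname{Lie}}$, the conilpotent cofree functor $T^{\operatorname{co}}$ becomes the cocommutative conilpotent cofree functor $S^{\operatorname{co}}$, and $\cDGA$ becomes $\cCDGA$. The single piece of genuinely new structural input is \Cref{convcComLie}: because every object of $\cCDGA$ is cocommutative, the convolution algebra $\{C,\mathfrak{h}\}$ really carries a dg-Lie structure, so the convolution functor lands in $\DGLA$ and the hom-sets $\DGLA(-,\{C,\mathfrak{h}\})$ are available to build the adjunction.

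First I would settle the case of a free Lie algebra $T_{\operatorname{Lie}}V$. Composing the free--forgetful adjunction for dg-Lie algebras, the tensor--hom adjunction for dg-vector spaces applied twice, and the cofree adjunction for $\cCDGA$, one obtains the natural isomorphisms
\begin{equation*}
\begin{aligned}
&\DGLA(T_{\operatorname{Lie}}V,\{C,\mathfrak{h}\}) \cong \DGVec(V,\iVec(C,\mathfrak{h})) \cong \DGVec(V\otimes C,\mathfrak{h}) \cong\\
&\DGVec(C,\iVec(V,\mathfrak{h})) \cong \cCDGA(C,S^{\operatorname{co}}\iVec(V,\mathfrak{h})),
\end{aligned}
\end{equation*}
where the first isomorphism uses that the underlying dg-vector space of $\{C,\mathfrak{h}\}$ is $\iVec(C,\mathfrak{h})$. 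This forces the definition $\overline{\DGLA}(T_{\operatorname{Lie}}V,\mathfrak{h}) := S^{\operatorname{co}}\iVec(V,\mathfrak{h})$.

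To extend to an arbitrary Lie algebra $\mathfrak{g}$, I would use that $\DGLA$ is monadic over $\DGVec$, so $\mathfrak{g}$ admits its canonical presentation as a coequaliser of free Lie algebras
\begin{equation*}
\begin{tikzcd}
T_{\operatorname{Lie}}^2\mathfrak{g} \arrow[r,shift left=0.5ex] \arrow[r,shift right=0.5ex] & T_{\operatorname{Lie}}\mathfrak{g} \arrow[r] & \mathfrak{g}.
\end{tikzcd}
\end{equation*}
Since the enrichment functor is contravariant in its first slot, I define $\overline{\DGLA}(\mathfrak{g},\mathfrak{h})$ as the equaliser of
\begin{equation*}
\begin{tikzcd}
\overline{\DGLA}(T_{\operatorname{Lie}}\mathfrak{g},\mathfrak{h}) \arrow[r,shift left=0.5ex] \arrow[r,shift right=0.5ex] \arrow[d,phantom,"\cong"] & \overline{\DGLA}(T_{\operatorname{Lie}}^2\mathfrak{g},\mathfrak{h}) \arrow[d,phantom,"\cong"]\\
S^{\operatorname{co}}\iVec(\mathfrak{g},\mathfrak{h}) \arrow[r,shift left=0.5ex] \arrow[r,shift right=0.5ex] & S^{\operatorname{co}}\iVec(T_{\operatorname{Lie}}\mathfrak{g},\mathfrak{h}),
\end{tikzcd}
\end{equation*}
the equaliser being computed in the complete category $\cCDGA$. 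Because $\cCDGA(C,-)$ is representable it preserves limits, so applying it to this equaliser and comparing with the coequaliser presentation of $\mathfrak{g}$ produces the sought natural bijection
\begin{equation*}
\DGLA(\mathfrak{g},\{C,\mathfrak{h}\}) \cong \cCDGA(C,\overline{\DGLA}(\mathfrak{g},\mathfrak{h})).
\end{equation*}

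The step requiring the most care is the bookkeeping of naturality: one must check that the two parallel structure maps of the canonical presentation are carried, under the free-object isomorphisms above, onto the two maps in the bottom row, compatibly and naturally in all variables, so that $\overline{\DGLA}$ is a genuine bifunctor and the adjunction isomorphism is natural rather than merely a pointwise bijection. This is the same verification glossed over in the associative case; the only com-Lie-specific subtlety is that \Cref{convcComLie} must be invoked to keep the entire chain inside $\DGLA$ and $\cCDGA$ rather than slipping into the associative categories.
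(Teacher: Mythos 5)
Your proof is correct and follows the paper's argument essentially verbatim: the free case via the same chain of adjunction isomorphisms forcing $\overline{\DGLA}(T_{\operatorname{Lie}}V,\mathfrak{h}) := S^{\operatorname{co}}\iVec(V,\mathfrak{h})$, extension to arbitrary $\mathfrak{g}$ via the canonical coequaliser presentation and an equaliser in $\cCDGA$, and limit-preservation of $\cCDGA(C,-)$ to obtain the adjunction bijection. Your orientation of the parallel pair, $\overline{\DGLA}(T_{\operatorname{Lie}}\mathfrak{g},\mathfrak{h}) \rightrightarrows \overline{\DGLA}(T_{\operatorname{Lie}}^2\mathfrak{g},\mathfrak{h})$, is in fact the one dictated by contravariance of the first slot, matching the paper's associative-case diagram (the paper's displayed diagram in the Lie case transposes the two objects, evidently a typo), so no correction is needed.
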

    \begin{proof}
        For a free Lie algebra $T_{\operatorname{Lie}}V$ we have the natural isomorphism
        \begin{equation*}
            \begin{aligned}
            &\DGLA(T_{\operatorname{Lie}}V,\{C,\mathfrak{h}\}) \cong \DGVec(V,\iVec(C,\mathfrak{h})) \cong \DGVec(V\otimes C,\mathfrak{h}) \cong\\
            &\DGVec(C,\iVec(V,\mathfrak{h})) \cong \cCDGA(C,S^{\operatorname{co}}(\iVec(V,\mathfrak{h})),
            \end{aligned}
        \end{equation*}
        so we can define the enriched hom functor as
        \begin{equation*}
            \underline{\DGLA}(T_{\operatorname{Lie}}V,\mathfrak{h}) := S^{\operatorname{co}}\iVec(V,\mathfrak{h}).
        \end{equation*}
        Given an arbitrary $\mathfrak{g} \in \DGLA$ we write it as a coequaliser
        \begin{equation*}
        \begin{tikzcd}
            T^2_{\operatorname{Lie}}\mathfrak{g} \arrow[r,shift left=0.5ex] \arrow[r,shift right=0.5ex] & T_{\operatorname{Lie}}\mathfrak{g}\arrow[r] & \mathfrak{g}.
        \end{tikzcd}
    \end{equation*}
        We then define $\overline{\DGLA}(\mathfrak{g},\mathfrak{h})$ as the equaliser of
        \begin{equation*}
        \begin{tikzcd}
            \overline{\DGLA}(T_{\operatorname{Lie}}^2\mathfrak{g},\mathfrak{h}) \arrow[r,shift left=0.5ex] \arrow[r,shift right=0.5ex] \arrow[d,phantom,"\cong"] & \overline{\DGLA}(T_{\operatorname{Lie}}\mathfrak{g},\mathfrak{h}) \arrow[d,phantom,"\cong"]\\
           S^{\operatorname{co}}\iVec(T_{\operatorname{Lie}}\mathfrak{g},\mathfrak{h}) \arrow[r,shift left=0.5ex] \arrow[r,shift right=0.5ex] & S^{\operatorname{co}}\iVec(\mathfrak{g},\mathfrak{h}).
        \end{tikzcd}
    \end{equation*}
    Since $\cCDGA(C,-)$ preserves limits we get the desired natural bijection
    \begin{equation*} 
        \DGLA(\mathfrak{g},\{C,\mathfrak{h}\}) \cong  \cCDGA(C,\overline{\DGLA}(\mathfrak{g},\mathfrak{h})).
    \end{equation*}
    \end{proof}

    We construct the tensoring functor similarly.
    \begin{prop}\label{propdglatensoringfunctor}
        There exists a functor
        \begin{equation*}
            (-) \rhd (-):\cCDGA \times \DGLA \to \DGLA,
        \end{equation*}
        such that $C \rhd (-)$ is left adjoint to the convolution algebra functor $\{C,-\}$ for each coalgebra $C\in \cCDGA$. We will refer to $\rhd$ as the tensoring functor.
    \end{prop}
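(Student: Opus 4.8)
The plan is to mirror the construction of the associative tensoring functor in \Cref{proptensoringfunctor}, replacing the free associative algebra functor $T_0$ by the free Lie algebra functor $T_{\operatorname{Lie}}$ at every step. The crucial structural input is \Cref{convcComLie}: because $C$ is cocommutative, the convolution algebra $\{C,\mathfrak{h}\}$ is genuinely a dg-Lie algebra, so that $\{C,-\}$ really does land in $\DGLA$ and the sought adjunction is one internal to $\DGLA$. I would also record at the outset that on underlying dg-vector spaces $\{C,\mathfrak{h}\}$ has carrier $\iVec(C,\mathfrak{h})$, since this is exactly what lets the free--forgetful adjunction for Lie algebras interact with the convolution product.

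First I would treat free Lie algebras. For $V\in\DGVec$ I would assemble the chain of natural isomorphisms
\begin{equation*}
    \begin{aligned}
        &\DGLA(T_{\operatorname{Lie}}V,\{C,\mathfrak{h}\}) \cong \DGVec(V,\iVec(C,\mathfrak{h})) \cong\\
        &\DGVec(C\otimes V,\mathfrak{h}) \cong \DGLA(T_{\operatorname{Lie}}(C\otimes V),\mathfrak{h}),
    \end{aligned}
\end{equation*}
where the first and last isomorphisms are the free--forgetful adjunction for dg-Lie algebras and the middle one is the tensor--hom adjunction for dg-vector spaces. This forces the definition $C\rhd T_{\operatorname{Lie}}V := T_{\operatorname{Lie}}(C\otimes V)$ on free Lie algebras.

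To extend to an arbitrary $\mathfrak{g}\in\DGLA$, I would use the canonical coequaliser presentation of $\mathfrak{g}$ by free Lie algebras,
\begin{equation*}
    \begin{tikzcd}
        T^2_{\operatorname{Lie}}\mathfrak{g} \arrow[r,shift left=0.5ex] \arrow[r,shift right=0.5ex] & T_{\operatorname{Lie}}\mathfrak{g} \arrow[r] & \mathfrak{g},
    \end{tikzcd}
\end{equation*}
and define $C\rhd\mathfrak{g}$ to be the coequaliser of the induced pair $T_{\operatorname{Lie}}(C\otimes T_{\operatorname{Lie}}\mathfrak{g}) \rightrightarrows T_{\operatorname{Lie}}(C\otimes\mathfrak{g})$ obtained from the free case. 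Since $\DGLA(-,\mathfrak{h})$ sends colimits to limits, applying it to this coequaliser exhibits $\DGLA(C\rhd\mathfrak{g},\mathfrak{h})$ as an equaliser that matches, term by term through the free case, the equaliser computing $\DGLA(\mathfrak{g},\{C,\mathfrak{h}\})$; this yields the desired natural bijection $\DGLA(\mathfrak{g},\{C,\mathfrak{h}\}) \cong \DGLA(C\rhd\mathfrak{g},\mathfrak{h})$ and hence the adjunction.

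The step requiring the most care --- and the only genuine departure from the formal pattern of \Cref{proptensoringfunctor} --- is making sure that the middle dg-vector space isomorphism is compatible with the Lie structure of the convolution algebra, i.e. that the free--forgetful adjunction $\DGLA(T_{\operatorname{Lie}}V,-)\cong\DGVec(V,U(-))$ is being applied to $\{C,\mathfrak{h}\}$ precisely \emph{as} a Lie algebra and not merely as its underlying complex. Because the bracket on $\{C,\mathfrak{h}\}$ is the convolution of the bracket on $\mathfrak{h}$ along the cocommutative comultiplication of $C$, and the forgetful functor $U:\DGLA\to\DGVec$ identifies $U\{C,\mathfrak{h}\}$ with $\iVec(C,\mathfrak{h})$, this compatibility is automatic; verifying it amounts to unwinding definitions rather than to any new computation. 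Everything else --- functoriality in $C$, naturality of the bijection, and independence of the chosen presentation --- is formal and parallels the associative argument verbatim.
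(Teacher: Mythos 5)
Your proposal is correct and takes essentially the same route as the paper's own proof: define $C\rhd T_{\operatorname{Lie}}V := T_{\operatorname{Lie}}(C\otimes V)$ on free Lie algebras via the free--forgetful and tensor--hom adjunctions, then extend to arbitrary $\mathfrak{g}$ through the canonical coequaliser presentation $T^2_{\operatorname{Lie}}\mathfrak{g} \rightrightarrows T_{\operatorname{Lie}}\mathfrak{g} \to \mathfrak{g}$, using that $\DGLA(-,\mathfrak{h})$ takes colimits to limits. Your additional care in invoking \Cref{convcComLie} and in identifying the underlying complex of $\{C,\mathfrak{h}\}$ with $\iVec(C,\mathfrak{h})$ is left implicit in the paper but does not alter the argument.
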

    \begin{proof}
    For a free Lie algebra $T_{\operatorname{Lie}}V$ we have the natural isomorphism
    \begin{equation*}
        \begin{aligned}
            &\DGLA(T_{\operatorname{Lie}}V,\{C,\mathfrak{h}\}) \cong  \DGVec(V,\{C,\mathfrak{h}\})\\ \cong
            &\DGVec(C\otimes V,\mathfrak{h}) \cong \DGLA(T_{\operatorname{Lie}}(C\otimes V),\mathfrak{h}).
        \end{aligned}
    \end{equation*}
         Hence we define
        \begin{equation*}
            C \rhd T_{\operatorname{Lie}}V := T_{\operatorname{Lie}}(C \otimes V).
        \end{equation*}
        Given an arbitrary Lie algebra $\mathfrak{g}$, we write it as a coequaliser and define $C\rhd \mathfrak{g}$ the coequaliser of
        \begin{equation*}
        \begin{tikzcd}
            C\rhd T_{\operatorname{Lie}}^2 \mathfrak{g} \arrow[r,shift left=0.5ex] \arrow[r,shift right=0.5ex] \arrow[d,phantom,"\cong"]& C \rhd T_{\operatorname{Lie}}\mathfrak{g} \arrow[d,phantom,"\cong"]\\
            T_{\operatorname{Lie}}(C\otimes_k T_{\operatorname{Lie}}\mathfrak{g}) \arrow[r,shift left=0.5ex] \arrow[r,shift right=0.5ex] & T_{\operatorname{Lie}}(C\otimes_k \mathfrak{g}).
        \end{tikzcd}
    \end{equation*}
    Since $\DGLA(-,\mathfrak{h})$ takes colimits to limits we get the desired natural bijection
        \begin{equation*}
            \DGLA(\mathfrak{g},\{C,\mathfrak{h}\})\cong \DGLA(C\rhd \mathfrak{g},\mathfrak{h}).
        \end{equation*}
    \end{proof}

    By combining \Cref{propdglaenrichmentfunctor} and \Cref{propdglatensoringfunctor} we also get a third adjunction between the tensoring and the enrichement functor.
    \begin{cor}
        The tensoring $(-)\rhd \mathfrak{h}$ is left adjoint to the enrichment functor $\overline{\DGLA}(\mathfrak{h},-)$ for each Lie algebra $\mathfrak{h} \in \DGLA$.
    \end{cor}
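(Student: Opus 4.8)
The plan is to obtain this adjunction purely formally, by composing the two adjunctions already established in \Cref{propdglatensoringfunctor} and \Cref{propdglaenrichmentfunctor}, exactly mirroring the associative case in \Cref{cortensenrich}. First I would fix a Lie algebra $\mathfrak{h}\in\DGLA$ and regard $(-)\rhd\mathfrak{h}$ as a functor $\cCDGA \to \DGLA$ sending $C \mapsto C\rhd\mathfrak{h}$, and $\overline{\DGLA}(\mathfrak{h},-)$ as a functor $\DGLA \to \cCDGA$ sending $\mathfrak{g}\mapsto\overline{\DGLA}(\mathfrak{h},\mathfrak{g})$. The goal is then a bijection $\DGLA(C\rhd\mathfrak{h},\mathfrak{g})\cong\cCDGA(C,\overline{\DGLA}(\mathfrak{h},\mathfrak{g}))$ natural in both $C$ and $\mathfrak{g}$.

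The key step is to chain the natural bijections for arbitrary $C\in\cCDGA$ and $\mathfrak{g}\in\DGLA$:
\begin{equation*}
    \DGLA(C\rhd\mathfrak{h},\mathfrak{g}) \cong \DGLA(\mathfrak{h},\{C,\mathfrak{g}\}) \cong \cCDGA(C,\overline{\DGLA}(\mathfrak{h},\mathfrak{g})),
\end{equation*}
where the first isomorphism is the tensoring adjunction of \Cref{propdglatensoringfunctor}, applied with the Lie algebra being acted upon taken to be $\mathfrak{h}$ and the target taken to be $\mathfrak{g}$, and the second is the enrichment adjunction of \Cref{propdglaenrichmentfunctor}, applied with its first argument set to $\mathfrak{h}$ and its second to $\mathfrak{g}$. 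Since both component bijections are natural in each of their variables, and in particular in $C$ and $\mathfrak{g}$, their composite is again natural and hence exhibits the claimed adjunction.

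I expect no genuine obstacle here: the result is a formal corollary of the two preceding propositions, and the only point requiring care is the bookkeeping of which slot of the convolution algebra $\{C,\mathfrak{g}\}$ is held fixed when invoking each adjunction. Note that $\{C,\mathfrak{g}\}$ is a genuine dg-Lie algebra by \Cref{convcComLie}, so every hom-set appearing above is already taken in $\DGLA$ or $\cCDGA$ as written, and no additional compatibility of the Lie structures needs to be checked.
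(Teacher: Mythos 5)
Your proposal is correct and matches the paper's intended argument exactly: the paper states this corollary as following ``by combining'' \Cref{propdglaenrichmentfunctor} and \Cref{propdglatensoringfunctor}, which is precisely your chain $\DGLA(C\rhd\mathfrak{h},\mathfrak{g}) \cong \DGLA(\mathfrak{h},\{C,\mathfrak{g}\}) \cong \cCDGA(C,\overline{\DGLA}(\mathfrak{h},\mathfrak{g}))$, with the convolution algebra $\{C,\mathfrak{g}\}$ serving as the pivot between the two adjunctions. Your bookkeeping of which slots are fixed in each adjunction is also accurate, so nothing is missing.
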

    In summary we have established adjunctions
\begin{equation*}
        \begin{tikzcd}
            \cCDGA \arrow[r,shift left=1.1ex,"{\{-,\mathfrak{h}\}^{\operatorname{op}}}"] \arrow[r,phantom,"\perp"] & \DGLA^{\operatorname{op}} \arrow[l,shift left=1.1ex,"{\overline{\DGLA}(-,\mathfrak{h})}"],
        \end{tikzcd}
    \end{equation*}
    
    \begin{equation*}
        \begin{tikzcd}
            \DGLA \arrow[r,shift left=1.1ex,"C\rhd (-)"] \arrow[r,phantom,"\perp"] & \DGLA \arrow[l,shift left=1.1ex,"{\{C,-\}}"],
        \end{tikzcd}
    \end{equation*}
    and
\begin{equation*}
        \begin{tikzcd}
            \cCDGA \arrow[r,shift left=1.1ex,"(-)\rhd A"] \arrow[r,phantom,"\perp"] & \DGLA \arrow[l,shift left=1.1ex,"{\overline{\DGLA}(\mathfrak{h},-)}"].
        \end{tikzcd}
    \end{equation*}

    As in the associative case we make use of the concept of measurings to show the coherence axiom for the module structure of $\DGLA$. Adapted to the Lie algebra case the definition becomes the following.
    \begin{definition}
        Let $A,B$ be dg-Lie algebras and $C$ a cocommutative non-counital dg-coalgebra. We say that a dg-linear morphism
        $f:C\otimes A \to B$ is a measuring if the adjoint morphism
        \begin{equation*}
            A \to \{C,B\},
        \end{equation*}
        is a morphism of Lie algebras. 
    \end{definition}
    As in the associative case we will denote by $\mathcal{M}(C,A,B)$ the set of measurings from $C\otimes A \to B$ and note this extends to a functor. 
    \begin{prop}
        The restricted measurement functor
        \begin{equation*}
            \mathcal{M}(-,-;-):\left(\cCDGA\right)^{\operatorname{op}}\times \DGLA^{\operatorname{op}} \times \DGLA \to \operatorname{Set},
    \end{equation*}
        is represented in each variable. 
    \end{prop}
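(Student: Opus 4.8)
The plan is to mirror the proof of the associative case, \Cref{propmeasuringrep}, essentially verbatim, replacing each associative structure by its com-Lie counterpart. The single genuinely new point is that here the convolution algebra $\{C,B\}$ must be treated as an object of $\DGLA$ rather than of $\DGA_0$; this is exactly what \Cref{convcComLie} provides, since every $C\in\cCDGA$ is cocommutative and every $B\in\DGLA$ is a dg-Lie algebra, so that $\{C,B\}$ carries the structure of a dg-Lie algebra. With this in hand, representability in all three variables follows formally from the adjunctions already established.

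First I would dispose of the second variable. By the very definition of a measuring in the Lie setting, a measuring $C\otimes A\to B$ is the same datum as a \emph{Lie}-algebra morphism $A\to\{C,B\}$. Hence $\mathcal{M}(C,-;B)$ is represented by the pair $\left(\{C,B\},\epsilon\right)$, where $\epsilon:C\otimes\{C,B\}\to B$ is the counit of the tensor-hom adjunction of dg-vector spaces; this $\epsilon$ is a measuring precisely because its adjoint is the identity on $\{C,B\}$, which is a Lie-algebra morphism by \Cref{convcComLie}.

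Next I would read off representability in the remaining variables from the tensoring and enrichment adjunctions of \Cref{propdglatensoringfunctor} and \Cref{propdglaenrichmentfunctor}. Chaining these yields the natural isomorphisms
\begin{equation*}
    \mathcal{M}(C,A,B) \cong \DGLA(A,\{C,B\}) \cong \DGLA(C\rhd A,B) \cong \cCDGA(C,\overline{\DGLA}(A,B)),
\end{equation*}
natural in each of $C$, $A$, and $B$. The middle bijection expresses that $\mathcal{M}(C,A;-)$ is represented by the object $C\rhd A$, and the rightmost bijection expresses that $\mathcal{M}(-,A;B)$ is represented by the enrichment object $\overline{\DGLA}(A,B)$, together with their respective universal measurings constructed exactly as in the associative remark following \Cref{propmeasuringrep}.

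The step I expect to require the most care is the second isomorphism above: it relies on the tensoring–cotensoring adjunction of \Cref{propdglatensoringfunctor}, and so one must confirm throughout that the hom-sets appearing are taken in $\DGLA$, i.e. that $\{C,B\}$ is genuinely a dg-Lie algebra and that the adjunction isomorphisms preserve the Lie structure rather than merely the underlying dg-vector space maps. Once cocommutativity of $C$ is invoked via \Cref{convcComLie}, however, this is no obstruction, and the naturality of each bijection in all three variables is inherited directly from the naturality of the adjunctions, so no additional computation is needed.
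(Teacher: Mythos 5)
Your proposal is correct and is precisely the argument the paper intends: the paper's own proof simply reads ``same as the proof of \Cref{propmeasuringrep},'' and your write-up faithfully transcribes that associative argument into the Lie setting, using \Cref{convcComLie} for the Lie structure on $\{C,B\}$ and the adjunctions of \Cref{propdglatensoringfunctor} and \Cref{propdglaenrichmentfunctor} for the first and third variables.
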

    \begin{proof}
        Same as the proof of \Cref{propmeasuringrep}.
    \end{proof}

    \begin{thm}
        $(\DGLA,\rhd,\overline{\DGLA},\{-,-\})$ is a closed $\cCDGA$-module.
    \end{thm}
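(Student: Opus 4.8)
The plan is to note that every piece of closure data has already been assembled in the preceding propositions, so that the only axiom left to verify is the module coherence (pentagon) for an associator, and to obtain this by transporting the cube-chase of \Cref{DGAclosedmodule} verbatim into the Lie setting. Concretely, right-closedness is the adjunction $C \rhd (-) \dashv \{C,-\}$ of \Cref{propdglatensoringfunctor}, left-closedness is the adjunction $(-) \rhd \mathfrak{h} \dashv \overline{\DGLA}(\mathfrak{h},-)$ built from \Cref{propdglaenrichmentfunctor} and \Cref{propdglatensoringfunctor}, and the cotensoring $\{C,-\}$ indeed takes values in $\DGLA$ by \Cref{convcComLie}, so that the measuring formalism makes sense. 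Hence the substantive content is a coherent natural isomorphism $a \colon (C \otimes D) \rhd \mathfrak{g} \to C \rhd (D \rhd \mathfrak{g})$.

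First I would prove the Lie analogue of \Cref{lemmaassoc}. The representing measuring $u \colon C \otimes \mathfrak{g} \to C \rhd \mathfrak{g}$ has the universal property that every measuring $v \colon C \otimes \mathfrak{g} \to \mathfrak{h}$ factors uniquely as $v = f \circ u$ through a morphism of dg-Lie algebras $f \colon C \rhd \mathfrak{g} \to \mathfrak{h}$; here $v$ is a measuring exactly when its adjoint $\mathfrak{g} \to \{C,\mathfrak{h}\}$ is a Lie map, which is meaningful because $\{C,\mathfrak{h}\}$ is a dg-Lie algebra by \Cref{convcComLie} for cocommutative $C$. Applying this universal property to the composite measuring $u \circ (\Id_C \otimes u) \circ \alpha_{C,D,\mathfrak{g}}$, where $\alpha$ is the associator of $\otimes$ on $\DGVec$, yields the unique associator $a$ filling the square analogous to the one in \Cref{lemmaassoc}; its naturality and invertibility follow from the same uniqueness clause.

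Finally I would assemble the coherence cube exactly as in \Cref{DGAclosedmodule}: its top face is the associativity pentagon for $\otimes$ on $\DGVec$ applied to $C, D, E, \mathfrak{g}$, its four vertical faces commute by the defining property of $a$, and its bottom face is the required module coherence square. Since the top and vertical faces commute, the bottom face commutes after precomposition with the universal measuring $u \colon ((C \otimes D) \otimes E) \otimes \mathfrak{g} \to ((C \otimes D) \otimes E) \rhd \mathfrak{g}$; because $u$ is right-cancellative on dg-Lie morphisms, the bottom face then commutes on the nose. The main point requiring care --- and the only place the argument is not purely formal --- is checking that the convolution-into-Lie-algebras and the attendant measuring theory behave just as in the associative case, but this is guaranteed by \Cref{convcComLie}; after that, every step is a formal consequence of the adjunctions already in hand.
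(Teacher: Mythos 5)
Your proposal is correct and takes essentially the same approach as the paper: the paper's own proof of this theorem consists of the single line ``Same as the proof of \Cref{lemmaassoc} and \Cref{DGAclosedmodule},'' i.e.\ transport the universal-measuring construction of the associator and the coherence cube-chase verbatim into the Lie setting, which is exactly what you spell out. Your explicit observation that \Cref{convcComLie} (cocommutativity making $\{C,\mathfrak{h}\}$ a dg-Lie algebra) is what licenses the measuring formalism is the right justification for the transfer, and is left implicit in the paper.
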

    \begin{proof}
        Same as the proof of \Cref{lemmaassoc} and \Cref{DGAclosedmodule}.
    \end{proof}

We are now ready to proceed with the homotopical perspective, which also is shown fully analogously to the associative case. 
\begin{prop}
    $(\cCDGA,\otimes)$ is a symmetric semi-monoidal model category. 
\end{prop}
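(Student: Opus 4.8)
The plan is to mirror exactly the two-step argument used in the associative case (\Cref{lemmatensorpres} followed by \Cref{coDGAmonoidalmodel}), since the model structure on $\cCDGA$ is defined by the identical recipe to that on $\cDGA$ and the forgetful functor $U:\cCDGA \to \DGVec$ enjoys the same formal properties: it commutes with $\otimes$, preserves colimits, and sends the cofibrations (the injections) to injections. The one point to keep in mind throughout is that the tensor product of two cocommutative coalgebras is again cocommutative, so that $\otimes$ genuinely restricts to a functor $\cCDGA \times \cCDGA \to \cCDGA$; this is already recorded above.

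First I would establish the cocommutative analogue of \Cref{lemmatensorpres}, namely that $\otimes:\cCDGA \times \cCDGA \to \cCDGA$ preserves cofibrations and weak equivalences separately in each variable. Preservation of cofibrations is immediate: cofibrations are the injections, $U$ commutes with $\otimes$ and sends injections to injections, and the tensor product of dg-vector spaces preserves injections. For weak equivalences, since this class is by definition the closure of the filtered quasi-isomorphisms under $2$-out-of-$3$, it suffices to treat a filtered quasi-isomorphism $f:C \to D$. Fixing $E \in \cCDGA$ together with admissible filtrations $F_C$, $F_D$ witnessing that $f$ is a filtered quasi-isomorphism, I would set $F_{C\otimes E} := F_C \otimes E$ and $F_{D\otimes E} := F_D \otimes E$, verify they are admissible, and use the identification $\operatorname{gr}(F_{C\otimes E}) \cong \operatorname{gr}(F_C)\otimes E$. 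The induced map on associated gradeds is then $\operatorname{gr} f \otimes E$, a quasi-isomorphism in each degree because tensoring over a field preserves quasi-isomorphisms; hence $f \otimes E$ is itself a filtered quasi-isomorphism.

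With this lemma in hand, the pushout-product argument of \Cref{coDGAmonoidalmodel} transfers essentially verbatim. Given a cofibration $i:C \hookrightarrow C'$ and an (acyclic) cofibration $j:D \hookrightarrow D'$, one forms the pushout $P := (C'\otimes D) \amalg_{C\otimes D} (C \otimes D')$ and must show the induced map $P \to C'\otimes D'$ is an (acyclic) cofibration. When $j$ is acyclic, the lemma makes $\Id_C\otimes j$ and $\Id_{C'}\otimes j$ acyclic cofibrations; since acyclic cofibrations are stable under cobase change, the leg $C'\otimes D \to P$ is an acyclic cofibration, and $2$-out-of-$3$ applied to its composite with $P \to C'\otimes D'$, which equals $\Id_{C'}\otimes j$, shows $P \to C'\otimes D'$ is a weak equivalence; the symmetric argument handles an acyclic $i$. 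Injectivity of $P \to C'\otimes D'$ reduces once more to $\DGVec$, since $U$ preserves colimits and cofibrations and commutes with $\otimes$.

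I do not anticipate a genuine obstacle, as the content lies entirely in checking that the admissible-filtration bookkeeping of the associative case survives the passage to cocommutative coalgebras. The only step warranting a moment's care is the claim that $F_C \otimes E$ is admissible as a filtration of $C \otimes E$, that is, compatible with the comultiplication of $C\otimes E$; but this is verified exactly as in \Cref{lemmatensorpres}, the cocommutativity playing no role in the required containment $\Delta(F_n) \subset \bigoplus_{k} F_{n-k}\otimes F_k$.
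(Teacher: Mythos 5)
Your proposal is correct and follows exactly the paper's intended argument: the paper's own proof of this proposition is literally ``Same as the proof of \Cref{coDGAmonoidalmodel},'' i.e.\ transporting \Cref{lemmatensorpres} and the pushout-product argument to the cocommutative setting, which is precisely what you have spelled out. The one detail you rightly flag --- that $\otimes$ restricts to $\cCDGA$ and that the filtration bookkeeping is unaffected by cocommutativity --- is the only content the paper leaves implicit.
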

\begin{proof}
    Same as the proof of  \Cref{coDGAmonoidalmodel}.
\end{proof}

\begin{lemma}\label{convpres(co)fibLie}
    The convolution algebra functor 
    \begin{equation*}
    \{-,-\}:\left(\cCDGA\right)^{\operatorname{op}} \times \DGLA \to \DGLA,
    \end{equation*}
    takes (acyclic) cofibrations in the first variable and (acyclic) fibrations in the second variable to (acyclic) fibrations separately.
\end{lemma}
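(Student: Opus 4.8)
The plan is to mirror the strategy of \Cref{convpres(co)fib} and transport the entire question to the level of underlying dg-vector spaces. The key observation is that the forgetful functor $U:\DGLA \to \DGVec$ detects both fibrations and weak equivalences — a map in $\DGLA$ is a fibration exactly when it is surjective and a weak equivalence exactly when it is a quasi-isomorphism, and both properties are visible on $U$ — and that $U$ carries the convolution Lie algebra $\{C,\mathfrak{h}\}$ to the internal hom $\iVec(C,\mathfrak{h})$ of dg-vector spaces. Thus, once we forget the bracket supplied by \Cref{convcComLie}, the bifunctor $\{-,-\}$ becomes the ordinary hom-complex functor $\iVec(-,-)$, and every assertion of the lemma reduces to a corresponding statement about $\iVec$.

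First I would dispatch the non-acyclic fibration claims. Since fibrations in $\DGLA$ are the surjections, it suffices to show that $\iVec(i,\mathfrak{h})$ is surjective whenever $i$ is a cofibration (injection) in $\cCDGA$, and that $\iVec(C,j)$ is surjective whenever $j$ is a fibration (surjection) in $\DGLA$. Working over a field, every injection and every surjection of dg-vector spaces splits in each degree, so precomposition along a degreewise-split injection and postcomposition along a degreewise-split surjection are both degreewise surjective; this gives surjectivity of the hom-complex maps, hence the desired fibrations.

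For the acyclic cases I would, exactly as in the associative setting, prove the stronger statement that $\{-,-\}$ preserves quasi-isomorphisms in each variable. This suffices because any weak equivalence in $\cCDGA$ is in particular a quasi-isomorphism, and any acyclic fibration in $\DGLA$ is a quasi-isomorphism. Passing to $U$ again, the claim becomes that $\iVec(-,-)$ sends quasi-isomorphisms to quasi-isomorphisms in each variable, which holds because the internal hom of dg-vector spaces over a field is exact. Combining this with the surjectivity already established yields the acyclic fibrations.

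I do not expect a genuine obstacle here: the only structural difference from \Cref{convpres(co)fib} is that $\{C,\mathfrak{h}\}$ now carries a Lie bracket rather than an associative product, but since fibrations and weak equivalences in $\DGLA$ are detected on the underlying complex, the bracket never enters the argument. The one point that must be checked — and it is immediate — is that $U$ indeed commutes with the convolution construction, so that all the work takes place in $\DGVec$, where the relevant degreewise splitting and exactness properties are standard.
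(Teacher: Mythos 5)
Your proposal is correct and takes essentially the same route as the paper: the paper's proof simply defers to the associative case, where fibrations are declared immediate and the acyclic cases are handled by proving the stronger statement that the convolution functor preserves quasi-isomorphisms, via the facts that weak equivalences of coalgebras are quasi-isomorphisms, that the forgetful functor to $\DGVec$ commutes with the convolution construction, and that the internal hom of $\DGVec$ is exact. The only difference is that you spell out the degreewise-splitting argument behind the ``immediate'' surjectivity claims, which is a harmless (and welcome) elaboration.
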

\begin{proof}
    Same as the proof of \Cref{convpres(co)fib}.
\end{proof}

\begin{thm}
    $(\DGLA,\overline{\DGLA}(-,-),\rhd,\{-,-\})$ is a
    $\cCDGA$-\\module model category.
\end{thm}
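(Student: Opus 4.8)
The structural hypotheses are already in place: $(\cCDGA,\otimes)$ has been shown to be a symmetric semi-monoidal model category in the preceding proposition, and $(\DGLA,\rhd,\overline{\DGLA},\{-,-\})$ is a closed $\cCDGA$-module by the earlier theorem. Thus the only thing left to verify for the definition of a $\cCDGA$-module model category is that the tensoring functor $\rhd:\cCDGA \times \DGLA \to \DGLA$ is a (left) Quillen bifunctor, and the plan is to run the argument of \Cref{DGAmonoidalmodel} verbatim in the Lie setting. Since cofibrations in $\cCDGA$ are precisely the injections and fibrations in $\DGLA$ precisely the surjections, both are far more tractable on the cotensoring side, so I would first transport the pushout-product condition across the tensoring--cotensoring adjunction of \Cref{propdglatensoringfunctor}. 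By the standard adjoint reformulation of the Quillen bifunctor axiom, this amounts to showing that for every cofibration $i:C\hookrightarrow C'$ in $\cCDGA$ and every fibration $j:\mathfrak{h}\twoheadrightarrow \mathfrak{h}'$ in $\DGLA$ the induced pullback-power map
\begin{equation*}
    \{C',\mathfrak{h}\} \to \{C,\mathfrak{h}\} \times_{\{C,\mathfrak{h}'\}} \{C',\mathfrak{h}'\}
\end{equation*}
is a fibration, acyclic whenever $i$ or $j$ is.

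Next I would assemble exactly the pullback square from \Cref{DGAmonoidalmodel}, with the two legs $\{i,\mathfrak{h}\}$ and $\{C',j\}$ factoring through the universal map into the pullback. The essential input is \Cref{convpres(co)fibLie}, which records that the convolution Lie-algebra functor sends cofibrations in the first variable and fibrations in the second variable to fibrations, and their acyclic counterparts to acyclic fibrations. As (acyclic) fibrations in $\DGLA$ are stable under pullback, the projection legs out of the pullback are (acyclic) fibrations, and a 2-out-of-3 argument on the resulting commuting triangle forces the induced map to be a weak equivalence whenever $i$ or $j$ is acyclic. Surjectivity of the pullback-power map, and hence that it is a fibration, is then checked after applying the forgetful functor $U:\DGLA \to \DGVec$, which commutes with $\{-,-\}$ (sending it to the internal hom $\iVec$) and preserves limits and surjections, reducing the claim to the already-understood dg-vector-space case.

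Because every functor in the diagram is the Lie-valued analogue of its associative counterpart and $U$ intertwines them all, the argument really is \emph{the same} as the proof of \Cref{DGAmonoidalmodel}. The only point demanding genuine care, and so the main (if modest) obstacle, is that the convolution complex $\{C,\mathfrak{h}\}$ must be a dg-Lie algebra rather than merely an associative dg-algebra; this is exactly \Cref{convcComLie}, where cocommutativity of $C\in\cCDGA$ is used to produce the bracket. With that established, the homotopical bookkeeping — fibrations as surjections, weak equivalences as quasi-isomorphisms in $\DGLA$, and the filtered-quasi-isomorphism characterisation of weak equivalences in $\cCDGA$ — transfers without change.
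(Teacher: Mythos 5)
Your proposal is correct and takes essentially the same approach as the paper: the paper's proof is literally the remark that, using \Cref{convpres(co)fibLie}, the argument of \Cref{DGAmonoidalmodel} goes through unchanged, which is exactly what you have unpacked (the adjoint pullback-power reformulation via the cotensoring, stability of (acyclic) fibrations under pullback, 2-out-of-3, and surjectivity checked after the forgetful functor to $\DGVec$). Your added observation that \Cref{convcComLie} is what makes $\{C,\mathfrak{h}\}$ a dg-Lie algebra in the first place is a helpful explicit note the paper leaves implicit.
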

\begin{proof}
    Using \Cref{convpres(co)fibLie} we see that the same as the proof of \Cref{DGAmonoidalmodel} goes through.    
\end{proof}

\section{Semi-enriched Koszul duality} 
Having established the the semi-module category of structures of $\DGA_0$ and $\DGLA$ we will now return our attention to Koszul duality. Our main aim is to establish \Cref{thm1} and \Cref{thmLie}. That is we will show that the Quillen equivalences in both the associative and the com-Lie case of Koszul duality upgrades to semi-module Quillen equivalences. 

As pointed out in \cite{Anel2013} the bar and cobar constructions are directly related to the constructed enrichment functor and the tensoring functor respectively. Specifically we have the following result.
\begin{prop}\label{proprhdomega}
    There exists natural isomorphisms
    \begin{equation*}
        \Omega C \cong C \rhd \mathbf{mc} \quad \text{ and } \quad  BA \cong \eDGA (\mathbf{mc},A),
\end{equation*}
where $\mathbf{mc}$ is the universal Maurer-Cartan algebra.
\end{prop}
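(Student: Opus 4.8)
The plan is to avoid computing either construction by hand and instead to pin down $C \rhd \mathbf{mc}$ and $\eDGA(\mathbf{mc},A)$ by their universal properties, assembling a chain of natural isomorphisms of hom-sets and then invoking the Yoneda lemma. The three ingredients I would use are: the Koszul-duality adjunction, which factors the bar--cobar hom-sets through Maurer--Cartan sets as $\DGA_0(\Omega C, A) \cong \operatorname{MC}(\{C,A\}) \cong \cDGA(C, BA)$; the representability of the Maurer--Cartan functor, giving $\operatorname{MC}(B) \cong \DGA_0(\mathbf{mc}, B)$ naturally in $B$; and the tensoring and enrichment adjunctions of \Cref{proptensoringfunctor} and \Cref{propenrichmentfunctor}.

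For the cobar identification I fix $C$ and let $A$ vary. Stringing the above together yields
\begin{equation*}
\DGA_0(\Omega C, A) \cong \operatorname{MC}(\{C,A\}) \cong \DGA_0(\mathbf{mc}, \{C,A\}) \cong \DGA_0(C \rhd \mathbf{mc}, A),
\end{equation*}
where the final step is the tensoring adjunction $C \rhd (-) \dashv \{C,-\}$ from \Cref{proptensoringfunctor}. Each isomorphism is natural in $A$, so the composite is a natural isomorphism $\DGA_0(\Omega C, -) \cong \DGA_0(C \rhd \mathbf{mc}, -)$, and the Yoneda lemma produces an isomorphism $\Omega C \cong C \rhd \mathbf{mc}$ in $\DGA_0$.

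The bar identification is dual: fix $A$ and let $C$ vary. The analogous chain reads
\begin{equation*}
\cDGA(C, BA) \cong \operatorname{MC}(\{C,A\}) \cong \DGA_0(\mathbf{mc}, \{C,A\}) \cong \cDGA(C, \eDGA(\mathbf{mc}, A)),
\end{equation*}
where the last step is the enrichment adjunction $\DGA_0(\mathbf{mc}, \{C,A\}) \cong \cDGA(C, \eDGA(\mathbf{mc}, A))$ of \Cref{propenrichmentfunctor}. Every map is natural in $C$, so Yoneda in $\cDGA$ gives $BA \cong \eDGA(\mathbf{mc}, A)$.

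Finally I would record that both isomorphisms are natural in the remaining variable ($C$ for the cobar statement, $A$ for the bar statement), which follows because each constituent isomorphism is natural in that variable as well, so the Yoneda argument upgrades to an isomorphism of functors. The point I prefer this route for, rather than a direct computation, is that the matching of differentials need not be verified separately: working throughout in the dg-categories $\DGA_0$ and $\cDGA$, every bijection above is an isomorphism of dg-objects, so the identifications automatically respect the cobar differential induced by $d_C + \Delta$ and the bar differential induced by $d_A + m$. The only genuine obstacle is the bookkeeping of the naturality of Maurer--Cartan representability in the algebra $\{C,A\}$, which I would extract from the functoriality of the convolution construction; once that is in place the conclusion is immediate.
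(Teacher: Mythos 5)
Your proposal is correct and takes essentially the same route as the paper's proof: both factor the bar--cobar hom-sets through the Maurer--Cartan sets $\operatorname{MC}(\{C,A\})$, invoke representability of $\operatorname{MC}$ by $\mathbf{mc}$, and conclude via the tensoring adjunction of \Cref{proptensoringfunctor} (respectively the enrichment adjunction of \Cref{propenrichmentfunctor}) together with Yoneda. The paper's version is merely terser, leaving the Yoneda step and the enrichment-adjunction step implicit, whereas you spell both out explicitly.
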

\begin{proof}
    By the tensoring adjunction and representability of the\\ Maurer-Cartan functor $\operatorname{MC}$ we have natural isomorphisms
    \begin{equation*}
        \DGA(C \rhd \mathbf{mc}, A) \cong \DGA(\mathbf{mc},\{C,A\}) \cong \operatorname{MC}(\{C,A\}).
    \end{equation*}
    The statement now follows from the bar-cobar adjunction
    \begin{equation*}
        \DGA_0(\Omega C,A) \cong \operatorname{MC}(\{C,A\}) \cong \cDGA(C,BA).
    \end{equation*}
\end{proof}

Similarly for the $\DGLA$ case we have the analogous result.
\begin{prop}\label{proprhdomegaLie}
    There exists natural isomorphisms
    \begin{equation*}
        \Omega C \cong C \rhd \mathbf{mc}_{\operatorname{Lie}} \quad \text{ and } \quad B A \cong \DGLA(\mathbf{mc}_{\operatorname{Lie}},A),
\end{equation*}
where $\mathbf{mc}_{\operatorname{Lie}}$ is the universal Maurer-Cartan Lie algebra.
\end{prop}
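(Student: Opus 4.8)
The plan is to mirror the argument of \Cref{proprhdomega} verbatim, replacing each associative ingredient by its com-Lie counterpart. The two claimed isomorphisms will each follow from a Yoneda argument built by chaining the adjunctions already established, so no genuinely new computation is required beyond checking that the Lie-theoretic analogues of the functors involved line up.

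First I would establish the cobar isomorphism $\Omega C \cong C \rhd \mathbf{mc}_{\operatorname{Lie}}$. Using the tensoring adjunction of \Cref{propdglatensoringfunctor} together with the representability of $\operatorname{MC}_{\operatorname{Lie}}$ by $\mathbf{mc}_{\operatorname{Lie}}$, I obtain natural isomorphisms
\begin{equation*}
    \DGLA(C\rhd \mathbf{mc}_{\operatorname{Lie}},\mathfrak{g}) \cong \DGLA(\mathbf{mc}_{\operatorname{Lie}},\{C,\mathfrak{g}\}) \cong \operatorname{MC}_{\operatorname{Lie}}(\{C,\mathfrak{g}\}),
\end{equation*}
where $\{C,\mathfrak{g}\}$ is a genuine dg-Lie algebra by \Cref{convcComLie} since $C$ is cocommutative, so that $\operatorname{MC}_{\operatorname{Lie}}$ indeed applies. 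Comparing with the left half of the bar-cobar adjunction $\DGLA(\Omega C,\mathfrak{g}) \cong \operatorname{MC}_{\operatorname{Lie}}(\{C,\mathfrak{g}\})$ and invoking Yoneda in $\DGLA$ yields $\Omega C \cong C\rhd \mathbf{mc}_{\operatorname{Lie}}$, naturally in $C$.

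The bar isomorphism is dual, and I would phrase it via the enrichment functor $\overline{\DGLA}$ exactly as $BA \cong \eDGA(\mathbf{mc},A)$ in the associative case. Using the enrichment adjunction of \Cref{propdglaenrichmentfunctor} I would obtain
\begin{equation*}
    \cCDGA(C,\overline{\DGLA}(\mathbf{mc}_{\operatorname{Lie}},A)) \cong \DGLA(\mathbf{mc}_{\operatorname{Lie}},\{C,A\}) \cong \operatorname{MC}_{\operatorname{Lie}}(\{C,A\}),
\end{equation*}
and then compare with the right half of the bar-cobar adjunction $\cCDGA(C,BA) \cong \operatorname{MC}_{\operatorname{Lie}}(\{C,A\})$; a Yoneda argument in $\cCDGA$ identifies the two coalgebras.

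Because every adjunction in sight has already been set up in \Cref{propdglatensoringfunctor}, \Cref{propdglaenrichmentfunctor}, and the Koszul duality recollection, there is no substantial obstacle. The only point demanding care is bookkeeping the Lie structure on the convolution algebra: one must confirm that the Maurer-Cartan equation $dx + \tfrac12[x,x]=0$ governing $\operatorname{MC}_{\operatorname{Lie}}$ is precisely the one induced through \Cref{convcComLie}, so that all three descriptions of the functor $\mathfrak{g}\mapsto \operatorname{MC}_{\operatorname{Lie}}(\{C,\mathfrak{g}\})$ agree as functors and not merely objectwise. Granting this, both isomorphisms are forced by Yoneda and are automatically natural.
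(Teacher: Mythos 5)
Your proposal is correct and follows essentially the same route as the paper's proof: the tensoring adjunction plus representability of $\operatorname{MC}_{\operatorname{Lie}}$, compared against the bar-cobar adjunction via Yoneda. In fact you are slightly more explicit than the paper, which leaves the bar half (the chain through the enrichment adjunction $\cCDGA(C,\overline{\DGLA}(\mathbf{mc}_{\operatorname{Lie}},A)) \cong \DGLA(\mathbf{mc}_{\operatorname{Lie}},\{C,A\})$, correctly identifying the enrichment functor $\overline{\DGLA}$ as the target of the isomorphism) implicit in the phrase ``the statement now follows.''
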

\begin{proof}
    By the tensoring adjunction and representability of the\\
    Maurer-Cartan functor $\operatorname{MC}_{\operatorname{Lie}}$ we have natural isomorphisms
    \begin{equation*}
        \DGLA(C \rhd \mathbf{mc}_{\operatorname{Lie}}, \mathfrak{g}) \cong \DGLA(\mathbf{mc}_{\operatorname{Lie}},\{C,\mathbf{g}\}) \cong \operatorname{MC}(\{C,\mathbf{g}\}).
    \end{equation*}
    The statement now follows from the bar-cobar adjunction
    \begin{equation*}
        \DGLA(\Omega C,\mathfrak{g}) \cong \operatorname{MC}_{\operatorname{Lie}}(\{C,\mathfrak{g}\}) \cong \cCDGA(C,B\mathfrak{g}).
    \end{equation*}
\end{proof}
As a consequence of \Cref{proprhdomega} we see that the bar-cobar adjunction, in the associative case, upgrades to a $\cDGA$-module Quillen equivalence. Similarly \Cref{proprhdomegaLie} implies that the bar-cobar adjunction, in the com-Lie case, upgrades to a $\cCDGA$-module Quillen equivalence. As a consequence, we have established \Cref{thm1} and \Cref{thmLie}.

\begin{remark}
    Note that it also follows that the bar construction $B$, in both the associative and the com-Lie case, is a quasi-strong semi-module functor. Explicitly the weak equivalence is given by
    \begin{equation*}
        C \otimes BA \xrightarrow{\sim} B\Omega (C\otimes BA) \xrightarrow{\cong} B(C\rhd \Omega BA) \xrightarrow{\sim} B(C\rhd A),
    \end{equation*}
    here in the notation of the associative case. 
\end{remark}

\begin{remark}
    It may seem that our results should generalise to the operadic context of Koszul duality. This is however not the case as can be seen from considering a third case of Koszul duality between the category of conilpotent dg-Lie algebra, $\cDGLA$, and the category of commutative non-unital dg-algebras $\operatorname{cDGA}_0$. This case was established in \cite{Lazarev2015} and shows that there is a Quillen equivalence
    \begin{equation*}
    \begin{tikzcd}
        \cDGLA\arrow[r,shift left=1.5ex,"\Omega"] \arrow[r,phantom,"\perp"] & \operatorname{cDGA}_0 \arrow[l,shift left=1.5ex,"B"].
    \end{tikzcd}
    \end{equation*}
    
    The problem here, in regards to extending our results, is that $\cDGLA$ does not have the notion of a tensor product.  Instead one could consider the monoidal structure given by the direct product, which indeed gives a closed monoidal structure on $\cDGLA$ albeit with a quite different internal hom functor from the other cases. However one quickly runs into trouble with defining a $\cDGLA$-module structure for $\operatorname{cDGA}_0$ as we don't have the concept of a convolution algebra or the Sweedler theory adjunctions to rely on.
\end{remark}

\printbibliography
\noindent
Department of Mathematics and Statistics, Lancaster University,\\
Lancaster, LA1 4YF, UK\\
E-mail address: b.eurenius@lancaster.ac.uk

\end{document}